\DeclareMathOperator{\Princ}{Princ}
\DeclareMathOperator{\Fuse}{Fuse}
\DeclareMathOperator{\Split}{Split}
\DeclareMathOperator{\Min}{Min}
\DeclareMathOperator{\Base}{Base}
\DeclareMathOperator{\Frame}{Frame}
\DeclareMathOperator{\Framew}{Frame_W}
\DeclareMathOperator{\Bridge}{Bridge}
\newcommand{\Fused}{\gi}
\newcommand{\gsum}{\overset{\text{\large \tbf .}}{+}} 
\theoremstyle{plain}
\newtheorem{theorem}{Theorem}
\newtheorem{lemma}[theorem]{Lemma}
\newtheorem{illustration}[theorem]{Illustration}
\newtheorem{corollary}[theorem]{Corollary}
\theoremstyle{definition}
\newtheorem{definition}[theorem]{Definition}
\begin{document}
\title[Minimal representations by principal congruences]{Minimal representations of a finite distributive lattice\\
by principal congruences of a lattice}  
\author{G. Gr\"{a}tzer} 
\email[G. Gr\"atzer]{gratzer@me.com}
\urladdr[G. Gr\"atzer]{http://server.math.umanitoba.ca/~gratzer/}

\author[H. Lakser]{H. Lakser}
\email[H. Lakser]{hlakser@gmail.com} 
\address{Department of Mathematics\\University of Manitoba\\Winnipeg, MB R3T 2N2\\Canada}

\date{\today}
\subjclass[2010]{Primary: 06B10. Secondary: 06A06.}
\keywords{finite lattice, principal congruence, ordered set.}

\begin{abstract}
Let the finite distributive lattice $D$ be isomorphic
to the congruence lattice of a finite lattice $L$. 
Let $Q$ denote those elements of $D$ 
that correspond to principal congruences under this isomorphism.
Then $Q$ contains $0,1 \in D$ 
and all the join-irreducible elements of $D$. 
If $Q$ contains exactly these elements, 
we say that $L$ is a minimal representations of $D$
by principal congruences of the lattice $L$.

We characterize finite distributive lattices $D$
with a minimal representation by principal congruences
with the property that $D$ has at most two dual atoms.
\end{abstract}

\maketitle

\section{Introduction}\label{S:Introduction}

\subsection{The problem}\label{S:problem}
Let $\Princ L$ denote the ordered set of principal congruences
of the finite lattice $L$. Then 
\begin{equation}\label{E:Princ}
   \Princ L \ce \set{\zero, \one} \uu \Ji(\Con L),
\end{equation}
since the congruences $\zero, \one$ are principal 
(the congruence $\one$ is principal since $L$ is finite, hence bounded)
and the join-irreducible congruences 
are the congruences generated by prime intervals, 
and therefore principal. Let 
\begin{equation}\label{E:Min}
   \Min L = \set{\zero, \one} \uu \Ji(\Con L).
\end{equation}
Now \eqref{E:Princ} and \eqref{E:Min} combine: 
\begin{equation}\label{E:comb}
   \Min L \ci \Princ L.
\end{equation}

Let us say that a finite lattice $L$ has a 
\emph{minimal set of principal congruences}~if 
we have equality in \eqref{E:comb}, that is,
\begin{equation}\label{E:def}
   \Princ L = \Min L
\end{equation}
and we call $L$ a \emph{minimal representation of the distributive lattice $D=\Con{L}$}. If $P = \Ji(D)$, we equivalently say that $L$ is a \emph{minimal representation of the ordered set $P$}.
In the paper G. Gr\"atzer \cite{gG13}, 
we formulated the following question.

\smallskip

\tbf{Problem 4 of \cite{gG13}.}
Let $D$ be a finite distributive lattice. 
Under what conditions does $D$ have a minimal representation?
\smallskip

\subsection{Two illustrations}\label{S:illustrations}

We provide two examples. 
The first one is from G.~Gr\"atzer and H. Lakser \cite{GLa}.

\begin{illustration}\label{I:B3}
The eight element Boolean lattice $\SB 3 = \SC 2^3$
has no minimal representation.
\end{illustration}          

See \cite{GLa} for a proof.
Basically, if the lattice $L$ is a minimal representation, 
then on any maximal chain in $L$, 
we find two adjacent prime intervals generating distinct
atoms of $\Con L$. 
The two intervals together form an interval that generates
the join of two atoms of $\Con L$, contradicting minimality.

\begin{illustration}\label{I:C3}
The nine element distributive lattice $D = \SC 3^2$
has a minimal representation.
\end{illustration}          

We take $\SN 6$ (see the first diagram of Figure~\ref{F:N6}) 
as a minimal representation 
of the chain $\SC 3$. 
Then the glued sum $\SN 6 \gsum \SN 6$ 
(see the second diagram of Figure~\ref{F:N6})
is a congruence representation of $D = \SC 3^2$ 
but it is not minimal;
indeed, $\con{a,b}, \con{b,c} < \con{a,c}$
so  $\con{a,b} \jj \con{b,c} = \con{a,c}$
is principal and joint-reducible.
The third diagram of Figure~\ref{F:N6}
provides a minimal representation of $D = \SC 3^2$.

\begin{figure}[htb]
\centerline{\includegraphics[scale=1]{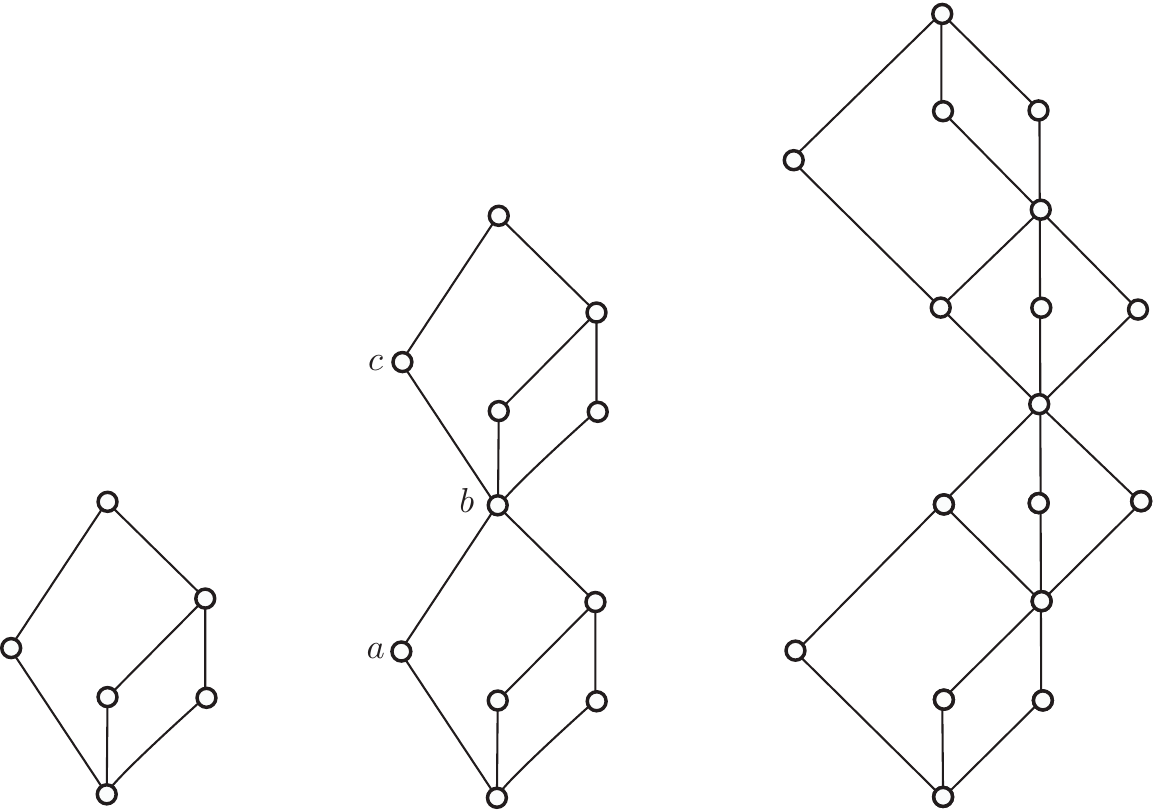}}
\caption{The lattices $\SN 6$ and its glued sum with itself }
\label{F:N6}
\end{figure}

\subsection{The result}\label{S:result}
We solve Problem 4 of \cite{gG13} as follows.

\begin{theorem}\label{T:main}
Let $D$ be a finite distributive lattice. 
Then $D$ has a minimal representation $L$
if{}f $D$ has at most two dual atoms.
\end{theorem}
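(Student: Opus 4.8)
The plan is to prove the two implications separately. Identify $D$ with the lattice of all down-sets of the ordered set $P:=\Ji D$. Then the join-irreducibles of $D$ are the principal down-sets $\downarrow p$ ($p\in P$), the dual atoms of $D$ are the down-sets $P\setminus\set{m}$ with $m$ maximal in $P$ (so ``at most two dual atoms'' means ``$P$ has at most two maximal elements''), and for any finite lattice $L$ with $\Con L\cong D$ the join-irreducible congruences of $L$ --- which, as noted, are exactly the congruences $\con{x,y}$ of prime intervals $[x,y]$ --- correspond to the elements of $P$. Under this translation the theorem reads: a minimal representation of $D$ exists if{}f $P$ has at most two maximal elements.

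For necessity, suppose $P$ has three distinct maximal elements $m_1,m_2,m_3$, and let $L$ be any finite lattice with $\Con L\cong D$. Fix a maximal chain $0=x_0\prec x_1\prec\dots\prec x_n=1$ of $L$ and let $p_k\in P$ be the element matching $\con{x_{k-1},x_k}$. Since the join of the $\con{x_{k-1},x_k}$ is $\con{0,1}=\one$, the down-sets $\downarrow p_k$ cover $P$; hence each maximal $m_t$ equals some $p_k$. Listing in increasing order the indices $k$ with $p_k\in\set{m_1,m_2,m_3}$, two consecutive listed indices $i<j$ must carry distinct values $p_i=m_a\neq m_b=p_j$, while $p_k\neq m_c$ for every $i\le k\le j$ ($c$ the third index). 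Then $\con{x_{i-1},x_j}=\bigvee_{k=i}^{j}\con{x_{k-1},x_k}$ corresponds to the down-set $\bigcup_{k=i}^{j}\downarrow p_k$, which contains the incomparable elements $m_a,m_b$ --- so this congruence is join-reducible --- but omits $m_c$ --- so it is not $\one$. Thus $\con{x_{i-1},x_j}\in\Princ L\setminus\Min L$, and $L$ is not minimal. (For $P$ a three-element antichain this recovers Illustration~\ref{I:B3}.)

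Sufficiency is the hard direction. Assume $P$ has maximal elements $m_1$ and (possibly) $m_2$. I would construct a finite lattice $L$ with $\Con L\cong D$ in which every interval $[a,b]$ has $\con{a,b}\in\set{\zero,\one}\uu\Ji(\Con L)$; equivalently, the down-set generated by those $p\in P$ for which some prime interval $[x,y]$ with $a\le x\prec y\le b$ has $\con{x,y}$ the join-irreducible congruence $\downarrow p$ is always principal or all of $P$. The construction: start from a base lattice with one prime interval per element of $P$, these intervals labelled by the colouring $\col$ so that the interval for $p$ generates the congruence $\downarrow p$; for each covering $q\prec p$ of $P$ insert a local $\SN 6$-shaped $\Bridge$ forcing the congruence of the $q$-edge below that of the $p$-edge and imposing nothing more, yielding a frame lattice with congruence lattice $D$; lay the pieces along a chain spine following a linear extension of $P$ in which the elements below both $m_1$ and $m_2$, then those below $m_1$ only, then those below $m_2$ only occur in consecutive blocks; and finally --- crucially using that $P$ has at most two maximal elements --- apply a $\Fuse$ (with an auxiliary $\Split$ where needed) merging the two top branches, the general form of the passage from the second to the third diagram of Figure~\ref{F:N6}, so that the only intervals seeing both $\downarrow m_1$ and $\downarrow m_2$ collapse all of $L$ and therefore generate $\one$.

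The main obstacle is the minimality verification for the constructed $L$: showing that no interval yields a join-reducible congruence other than $\one$, i.e., that the set of join-irreducibles an interval sees always has a greatest element or else contains every maximal element of $P$. This is a case analysis over the gadget structure, and the hypothesis is precisely what makes it succeed: a third maximal element produces, by the necessity argument, an unavoidable offending interval, whereas with only $m_1,m_2$ any interval that sees incomparable join-irreducibles must, after the $\Fuse$, already see both $\downarrow m_1$ and $\downarrow m_2$ and hence generate $\one$. A secondary, routine check is that the assembled lattice has congruence lattice exactly $D$ --- that the $\Bridge$ gadgets realize the order of $P$ faithfully and the final $\Fuse$ collapses no congruence it must preserve.
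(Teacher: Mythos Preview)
Your necessity argument is correct and is essentially the maximal-chain argument of the paper's Theorem~\ref{T:principal} and Corollary~\ref{C:antichain}, streamlined to the case at hand: pick adjacent occurrences of two distinct maximal colours along a maximal chain and observe that the interval between them yields a principal, join-reducible congruence that misses the third maximal element. The paper phrases this slightly more abstractly (for an arbitrary antichain $A$ with $\JJ A=\one$) but the content is the same.

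Sufficiency, however, is not proved in your proposal; it is only sketched, and the sketch is imprecise. You speak of ``$\SN 6$-shaped $\Bridge$'' gadgets for each covering $q\prec p$ of $P$ and of laying pieces ``along a chain spine following a linear extension of $P$''---but neither of these is what the paper does, and neither is enough by itself. In the paper the order on $P$ is realised by the $W(p,q)$ gadgets inside each $\Framew{P_j}$; the $\Bridge$ is a different gadget, used not for coverings but to \emph{identify} the two congruences that an element $r\in P_0\cap P_1$ acquires in the glued sum $\Base P=L_0\gsum L_1$. Your ``chain spine'' picture is closer to the second diagram of Figure~\ref{F:N6}, which the paper explicitly notes is \emph{not} minimal; the actual fix is not a single $\Fuse$ at the top but the glued-sum-plus-bridges architecture, and its correctness rests on the Bridge Theorem (Theorem~\ref{T:BT}), Corollary~\ref{C:collapse}, and the $\Fuse$/$\Split$ machinery applied at the level of $\Ji(\Con K)$, not at the level of the lattice $L$.

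More fundamentally, you explicitly flag ``the main obstacle''---verifying that no interval of the constructed $L$ generates a join-reducible congruence other than $\one$---and then do not discharge it. That verification is the bulk of the paper (Sections~\ref{S:Admissible}--\ref{S:theorem}): it requires the admissibility analysis of one-point extensions (Theorem~\ref{T:admis}, Corollary~\ref{C:ext}), the careful hypotheses of Theorem~\ref{T:min} (conditions \eqref{I:under1}--\eqref{I:over2}) that survive each bridge attachment, and the induction of Theorem~\ref{T:ind}. Without these, your final sentence---``any interval that sees incomparable join-irreducibles must, after the $\Fuse$, already see both $\downarrow m_1$ and $\downarrow m_2$''---is an assertion, not an argument; indeed, in a careless construction an interval can easily see two incomparable non-maximal colours without reaching either $m_1$ or $m_2$.
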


Note that if a finite distributive lattice
has a minimal representation,
then it has a minimal representation of length $10$. 

This results naturally splits up into three statements.

\begin{theorem}\label{T:mainvar}
Let $D$ be a finite distributive lattice.
\begin{enumeratei} 
\item Let $D$ have exactly one dual atom. 
Then $D$ has a minimal representation~$L$.

\item Let $D$ have exactly two dual atoms.
Then $D$ has a minimal representation~$L$.

\item Let $D$ have three or more dual atoms. 
Then $D$ does not have a minimal representation $L$.
\end{enumeratei} 
\end{theorem}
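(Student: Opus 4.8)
The three parts call for different methods: (iii) is a short incompatibility argument along a maximal chain, while (i) and (ii) require explicit constructions of $L$. Throughout I identify $D$ with the lattice of down-sets of the ordered set $P=\Ji(D)$; then the dual atoms of $D$ are the sets $P\setminus\{m\}$ for $m$ maximal in $P$, the join-irreducible elements of $D$ are the principal down-sets $\downarrow x$ (for $x\in P$), and the top $\one$ of $D$ — the down-set $P$ — is join-irreducible precisely when $P$ has a unique maximal element. I use freely the standard facts that $\con{c,d}$ is join-irreducible whenever $c\prec d$ in a finite lattice, and that for any interval $[u,v]$ of a finite lattice $L$ the congruence $\con{u,v}$ is the join of the congruences generated by the prime intervals contained in $[u,v]$; in particular, along a maximal chain $c_0\prec c_1\prec\dots$ one has $\con{c_0,c_i}=\bigvee_{j<i}\con{c_j,c_{j+1}}$.

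\emph{Part} (iii). Suppose toward a contradiction that $L$ is a minimal representation of $D$ and $D$ has at least three dual atoms, i.e.\ $P$ has at least three maximal elements; then $\one=P$ is join-reducible in $D$. Fix a maximal chain $\zero_L=c_0\prec c_1\prec\dots\prec c_n=\one_L$ of $L$. The congruences $\con{c_0,c_i}$ increase with $i$ from the join-irreducible $\con{c_0,c_1}$ to $\con{c_0,c_n}=\one$, which is join-reducible, so there is a least $i$, call it $t$, with $\con{c_0,c_t}$ join-reducible, and $2\le t\le n$. Now $\con{c_0,c_{t-1}}$ is join-irreducible (being nonzero, as $c_0<c_{t-1}$), say $\con{c_0,c_{t-1}}=\downarrow p$, while $\con{c_{t-1},c_t}=\downarrow q$ because $[c_{t-1},c_t]$ is prime; hence $\con{c_0,c_t}=\downarrow p\jj\downarrow q=\downarrow p\cup\downarrow q$. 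Since this down-set is join-reducible it has no greatest element, so $p$ and $q$ are incomparable; and then $\{p,q\}$ is exactly its set of maximal elements, since every element of $\downarrow p\cup\downarrow q$ lies below $p$ or below $q$. Thus $\con{c_0,c_t}$ is a principal congruence that is join-reducible, that is distinct from $\zero$, and that — having exactly two maximal elements — is distinct from $\one=P$; so $\con{c_0,c_t}\in\Princ L\setminus\Min L$, contradicting minimality. (This refines and generalizes the argument sketched for Illustration~\ref{I:B3}.)

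\emph{Parts} (i) \emph{and} (ii). These I would handle by explicitly constructing $L$ with the frame-and-bridge toolkit ($\Frame$, $\Bridge$, $\Fuse$, the coloring $\col$, \dots) of the sections below. The aim is to produce a lattice $L$ of bounded length — which is where the ``length $10$'' remark after Theorem~\ref{T:main} comes from — with its prime intervals colored by elements of $P$ so that $\Con L\cong D$ canonically, arranged so that for every interval $[u,v]$ with $u<v$ the colors of the prime intervals inside $[u,v]$ have a greatest element, except that in case (ii) the value $\con{u,v}=\one$ is also permitted. By the join formula for $\con{u,v}$, this says exactly that every nonzero principal congruence of $L$ is join-irreducible, save for $\one$ in case (ii); that is, $\Princ L=\Min L$. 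When $P$ has a greatest element $m$ (case (i)), $\one=\downarrow m$ is join-irreducible and $\Min L=\{\zero\}\cup\Ji(D)$, so every nonzero principal congruence must in fact be join-irreducible, with no exception; the construction would cap the frame with a prime interval colored $m$, which then generates $\one$. When $P$ has exactly two maximal elements $m_1,m_2$ (case (ii)), $\one=\downarrow m_1\cup\downarrow m_2$ is join-reducible and lies in $\Min L$; applying the analysis of part (iii) near the bottom and near the top of a maximal chain both pins down the shape $L$ must have there — colors $m_1$ and $m_2$ appearing near each end, with nothing join-reducible other than $\one$ arising strictly in between — and, with only two maximal elements present, leaves enough freedom to realize such an $L$.

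The hard part is the verification in cases (i) and (ii): once $L$ has been built, one must show that no interval $[u,v]$ — including those threading through the inserted bridge gadgets — carries a join-reducible congruence other than $\one$. I expect this to come down to a somewhat delicate case analysis of $\con{u,v}$ organised by where $u$ and $v$ sit among the levels of the frame and its attached gadgets, the crux being that two incomparable colors must never both appear among the prime intervals of a single interval $[u,v]$ unless $\con{u,v}=\one$. It is precisely the ``at most two dual atoms'' hypothesis that makes this possible: by part (iii), three incomparable maximal colors force a bad interval no matter how the bridges are arranged. Getting the height of the frame and the routing of the bridges right so that this never happens — and then carrying out the verification cleanly — is, I expect, the main obstacle.
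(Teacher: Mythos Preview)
Your argument for (iii) is correct and in fact more direct than the paper's. The paper first proves a general lemma (Theorem~\ref{T:principal}): for any antichain $A$ of join-irreducible congruences with $\JJ A=\con{x,y}$ and $|A|\ge 2$, each $\bga\in A$ has a join-irreducible partner $\bgb$ with $\bga\jj\bgb$ principal and join-reducible; it then specializes to $[x,y]=[0_L,1_L]$ and observes (Corollary~\ref{C:antichain}) that such an $\bga\jj\bgb$ cannot equal $\one$ when $P$ has three or more maximal elements. Your ``least $t$ with $\con{c_0,c_t}$ join-reducible'' shortcut reaches the same contradiction without the intermediate lemma, and the two-maximal-element count on $\downarrow p\cup\downarrow q$ is exactly the right observation. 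Either route works; yours is shorter, while the paper's isolates a reusable statement.

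For (i) and (ii), however, what you have written is a plan, not a proof. You correctly identify the target---build $L$ so that every nonzero principal congruence is join-irreducible except possibly $\one$---and correctly anticipate that the verification ``comes down to a somewhat delicate case analysis''. But that case analysis \emph{is} the proof, and you have not carried any of it out. For (ii) the paper devotes Sections~\ref{S:TwoConstr}--\ref{S:theorem} to this: it builds $\Base P$ as the glued sum of $\Framew{P_0}$ with the dual of $\Framew{P_1}$, attaches a bridge $\Bridge(r)$ for each $r\in P_0\cap P_1$, proves a general extension criterion for congruences under $K\mapsto K\cup\{u\}$ (Theorem~\ref{T:admis}), derives from it the Bridge Theorem (Theorem~\ref{T:BT}) describing exactly how $\Ji(\Con L)$ changes when a bridge is attached, and then runs an induction on $|P_0\cap P_1|$ via a split/fuse formalism on ordered sets (Section~\ref{S:fus}, Lemma~\ref{L:isoFS}). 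The substantive step you are missing is not merely ``where the bridges go'' but the precise control over how adding a bridge affects \emph{all} principal congruences of the ambient lattice---this is Theorem~\ref{T:BT}\eqref{I:BT6} together with Theorem~\ref{T:min}, and it does not follow from general principles about coloring prime intervals. For (i) the paper itself simply cites~\cite{gGb}, so there your deferral matches the paper's own treatment.
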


The first statement is discussed in Section~\ref{S:Related}. 
The third we verify in Section~\ref{S:Three}.
The rest of the paper deals with the second statement.

\subsection{Related results}\label{S:Related}

This paper continues G.~Gr\"atzer \cite{gG13}
(see also \cite[Section 10-6]{LTF} and \cite[Part VI]{CFL2}),
whose main result is the following statement. 

\begin{theorem}\label{T:bounded}
Let $P$ be a bounded ordered set.
Then there is a bounded lattice~$K$ such that $P \iso \Princ K$.
If the ordered set $P$ is finite, 
then the lattice $K$ can be chosen to be finite.
\end{theorem}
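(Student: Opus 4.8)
The plan is to construct $K$ by hand, handling the finite case first. Any bounded lattice $K$ we produce automatically has $\zero$ and $\one$ among its principal congruences---the latter because $K$ is bounded, so $\one = \con{0_K,1_K}$---and these will be matched with the bottom $0_P$ and top $1_P$ of $P$. So fix a bounded \emph{base lattice} $\Base$ carrying, for each $p \in P$ with $p \ne 0_P$, a designated prime interval $\mathfrak p_p$: the one assigned to $1_P$ is arranged to generate $\one$, while for $p$ in the ``interior'' $P \setminus \{0_P,1_P\}$ the congruences $\con{\mathfrak p_p}$ are built to be ``as independent as possible'' in $\Base$---pairwise incomparable, with no interval of $\Base$ having congruence a proper join of several of them. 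The intended isomorphism is $\gamma \colon P \to \Princ K$ with $\gamma(0_P) = \zero$ and $\gamma(p) = \con{\mathfrak p_p}$ for $p \ne 0_P$.

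Next, impose the order of $P$. For each comparable pair $p < q$ in $P$ (covering pairs would suffice, the rest following by transitivity), glue onto $\Base$ a \emph{bridge} lattice $\Bridge(\mathfrak p_p,\mathfrak p_q)$: a finite bounded lattice in which $\mathfrak p_p$ and $\mathfrak p_q$ are prime intervals and which enforces that (i)~$\con{\mathfrak p_p} \ci \con{\mathfrak p_q}$; (ii)~this containment is \emph{one-way}---collapsing $\mathfrak p_q$ forces nothing onto $\mathfrak p_p$ beyond the prescribed amount, while collapsing $\mathfrak p_p$ does \emph{not} collapse $\mathfrak p_q$; and (iii)~every interval internal to the bridge has congruence already among $\con{\mathfrak p_p}$, $\con{\mathfrak p_q}$, $\one$. (The prototype of such a gadget is $\SN 6$ of Figure~\ref{F:N6}, whose three side prime intervals have linearly ordered congruences; it is, in effect, a one-way bridge from a ``small'' interval up to a ``$\one$''-interval.) Let $K$ be the result of gluing $\Base$ and all the bridges along the shared prime intervals; it is bounded, and finite when $P$ is. For an arbitrary bounded $P$, take a direct limit of these constructions over the finite subposets of $P$ containing $0_P$ and $1_P$; this stays bounded since $0_K$ and $1_K$ are present throughout.

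It remains to check that $\gamma$ is an order isomorphism. One direction, $p \le q \Rightarrow \gamma(p) \ci \gamma(q)$, is immediate from the bridges, together with $\gamma(0_P) = \zero$ lying below, and $\gamma(1_P) = \one$ above, every congruence. The substance is a \emph{normal form} for $\Con K$: I would show that every congruence $\Theta$ of $K$ is determined by the set $I(\Theta) = \{\, p \in P \setminus \{0_P\} : \Theta \text{ collapses } \mathfrak p_p \,\}$; that $I(\Theta)$ is always a down-set of $P \setminus \{0_P\}$ and that every such down-set occurs; and---the crucial point---that a nonzero congruence $\Theta$ is principal \emph{iff} $I(\Theta)$ is a principal down-set ${\downarrow}q$, in which case $\Theta = \con{\mathfrak p_q} = \gamma(q)$. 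Granting this, $\Princ K = \{\zero\} \uu \{\, \gamma(q) : q \ne 0_P \,\}$ and $\gamma$ is a bijection; and since $\con{\mathfrak p_p} \ci \con{\mathfrak p_q}$ iff ${\downarrow}p \ci {\downarrow}q$ iff $p \le q$, both $\gamma$ and $\gamma^{-1}$ are order preserving. To obtain the normal form I would prove a congruence-propagation lemma that, inside $\Base$ and inside each bridge, pins down exactly which prime intervals are forced to collapse when a given one does, and then show that assembling these local rules over all of $K$ propagates collapses only ``downward in $P$'' and never manufactures an interval whose congruence escapes $\{\, \gamma(p) : p \in P \,\}$.

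The main obstacle is precisely that last clause. The gadgets interact, and a careless gluing creates \emph{short circuits}: a chain of bridges that accidentally identifies $\con{\mathfrak p_p}$ with $\con{\mathfrak p_q}$ for incomparable $p,q$; an upward leak through a common upper bound that makes $\con{\mathfrak p_p}$ and $\con{\mathfrak p_q}$ comparable; or---most dangerously---an interval $[a,b]$ whose congruence is a proper join-reducible congruence, such as $\gamma(p) \jj \gamma(q)$ with $p,q$ incomparable, hence a principal congruence outside the image of $\gamma$. (This last mechanism is exactly what denies $\SB 3$ a minimal representation in Illustration~\ref{I:B3}, and what spoils the naive glued sum in Illustration~\ref{I:C3}.) Forestalling all of this is what forces conditions (ii) and (iii) on the bridges, forces the designated prime intervals of $\Base$ to be congruence-independent at the outset, and forces enough ``vertical room'' between colors that any interval spanning two incomparable ones leaps straight to $\one$. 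The technical heart of the proof is showing that these separation properties survive the attachment of every bridge---and, in the infinite case, that the normal form passes to the direct limit. Once that is in hand, reading off $\Princ K \iso P$, together with the finiteness clause, is routine.
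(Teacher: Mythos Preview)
The paper does not prove Theorem~\ref{T:bounded}; it is quoted from \cite{gG13} as a related result, with no proof given here. There is therefore no in-paper proof to compare against---only the construction of \cite{gG13} (refined in \cite{gGb}), portions of which the paper recalls in Section~\ref{S:construction} for other purposes.

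Your plan is essentially that construction. Your ``base lattice'' is the frame lattice $\Frame P$ of Figure~\ref{F:newframe}: the chains $\SC p = \set{0, a_p, b_p, 1}$ supply the designated prime intervals, and distinct chains meet only at $0$ and $1$, which is exactly your ``congruence-independence'' together with ``any interval spanning two incomparable colors jumps to $\one$''. Your order-imposing gadget for $p<q$ is the lattice $W(p,q)$ of Figure~\ref{F:W} (or $S(p,q)$ in \cite{gG13}), glued in to force $\con{a_p,b_p}\leq\con{a_q,b_q}$ one-way while keeping every internal interval's congruence among the intended ones. Your ``normal form'' verification is what \cite{gG13} and \cite{gGb} carry out. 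One terminological warning: the present paper reserves the word \emph{bridge} for a different device (Figure~\ref{F:bridge}), used later to \emph{identify} two congruences rather than to order them; your usage clashes with that.

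For the infinite case, \cite{gG13} builds $\Framew P$ directly for arbitrary bounded $P$ rather than via a direct limit of finite pieces. Your colimit route is plausible but adds an obligation you do not mention: you must check that a congruence of the limit lattice is principal if{}f it is (eventually) principal at the finite stages, which is not automatic for general directed systems of lattices. The direct construction avoids this.
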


The bibliography lists a number of papers related to this result. 

G. Gr\"atzer \cite{gGb} states (Corollaries 15 and 16) 
that the lattice $L$
for Theorem~\ref{T:bounded} constructed in G.~Gr\"atzer \cite{gG13}
(and also the one in G. Gr\"atzer \cite{gGb})
provides a minimal representation, verifying
Theorem~\ref{T:mainvar}(i).

There is a related concept.
Let us call a finite distributive lattice $D$ 
\emph{fully representable}, if every $Q \ci D$ 
satisfying $\Min L \ci Q$ is representable.

G.~Cz\'edli \cite{gCb} and \cite{gCc} prove the following result.

\begin{theorem}
A finite distributive lattice $D$ is 
fully principal congruence representable 
if{}f it is planar and 
it has at most two dual atoms 
of which at most one is join-reducible.  
\end{theorem}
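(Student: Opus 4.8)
The plan is to prove the two implications separately: for necessity I would lean on the obstruction results already in hand, and for sufficiency I would give an explicit planar gluing construction. In both directions the hypotheses ``planar'', ``at most two dual atoms'', and ``at most one join-reducible dual atom'' must be isolated as the exact features that make the gluing go through.

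For necessity, suppose $D$ is fully principal congruence representable. Applying the definition to the smallest admissible subset $Q = \set{\zero,\one}\uu\Ji(D)$ yields a lattice $L$ with $\Con L \iso D$ and $\Princ L = \Min L$ --- that is, a minimal representation --- so Theorem~\ref{T:mainvar}(iii) immediately forces $D$ to have at most two dual atoms. To rule out the non-planar case and the case of two join-reducible dual atoms, I would, for each such $D$, produce one specific $Q$ with $\set{\zero,\one}\uu\Ji(D) \ci Q \ci D$ that no lattice represents. The tool is a strengthening of the maximal-chain argument sketched after Illustration~\ref{I:B3}: in any lattice $L$ with $\Con L \iso D$, a maximal chain of $L$ carries a sequence of prime-interval congruences, all of whose consecutive joins are principal; when $\Ji(D)$ has a three-element antichain (equivalently, $D$ is non-planar) or when two dual atoms of $D$ are join-reducible, one can choose $Q$ so that it is obliged to contain the atoms sitting below such a configuration while being obliged to omit a join of two of them, a contradiction.

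For sufficiency, assume $D$ is planar (so $\Ji(D)$ has width at most two) with at most two dual atoms, of which at most one is join-reducible. Given any $Q$ with $\set{\zero,\one}\uu\Ji(D) \ci Q \ci D$, I would start from a minimal representation of $D$ of the kind built for Theorem~\ref{T:mainvar}(i)--(ii) --- which exists exactly under these hypotheses --- and then, for each $q \in Q \setminus (\set{\zero,\one}\uu\Ji(D))$, splice in a planar gadget, analogous to $\SN 6$ and its glued sums in Illustration~\ref{I:C3}, that makes the congruence corresponding to $q$ principal without disturbing $\Con L \iso D$. The width-two structure of $\Ji(D)$ is what lets all these gadgets be drawn into a single planar diagram without colliding, and the dual-atom hypothesis is what keeps any uncontrolled join of atoms from appearing near the top.

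The hard part will be the bookkeeping in the sufficiency step: having forced every element of $Q$ to be principal, I still have to check that every element of $D \setminus Q$ stays \emph{non}-principal, and inserting a gadget tends to create new principal joins, so the gadgets and their placement must be arranged so that the principal congruences generated form exactly the up-set landing on $Q$ and nothing more. This is precisely where planarity and the bound on the dual atoms get used in an essential way --- the same phenomenon that separates Illustration~\ref{I:B3} from Illustration~\ref{I:C3}. On the necessity side the matching subtlety is pinning down the exact failing $Q$ and verifying that the maximal-chain obstruction applies to \emph{all} representations, not merely the minimal ones.
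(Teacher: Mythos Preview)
This theorem is not proved in the present paper. It appears in Section~\ref{S:Related} as a related result due to Cz\'edli, with the proof residing entirely in the cited references \cite{gCb} and \cite{gCc}; the paper neither sketches nor reproduces any part of the argument. There is therefore nothing here against which to compare your proposal.

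A brief comment on the proposal itself: your deduction of ``at most two dual atoms'' from full representability via the minimal choice $Q = \set{\zero,\one}\uu\Ji(D)$ and Theorem~\ref{T:mainvar}(iii) is correct and clean. The remaining necessity claims (planarity; at most one join-reducible dual atom) and the entire sufficiency direction are only plans, not arguments: you assert that an obstructing $Q$ exists and that gadgets can be spliced in, but you specify neither, and the maximal-chain heuristic you invoke is tailored to \emph{minimal} representations, whereas here you must obstruct a particular intermediate $Q$ against \emph{all} lattices $L$ with $\Con L \iso D$. Cz\'edli's actual proofs are substantial and use the three hypotheses in precise combinatorial ways that your outline does not yet pin down, so while the overall shape of your plan is reasonable, it remains well short of a proof and cannot be checked against anything in this paper.
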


It is interesting that both full 
representability and minimal representability
are determined at the dual atom level.

\subsection{Notation}
We use the notation as in \cite{CFL2}.
You can find the complete

\emph{Part I. A Brief Introduction to Lattices} and  
\emph{Glossary of Notation}

\noindent of \cite{CFL2} at 

\verb+tinyurl.com/lattices101+

\section{Three or more dual atoms}\label{S:Three}
We begin with the following result.

\begin{lemma}\label{L:corr}
For any finite distributive lattice $D$ there is a one-to-one correspondence between the
set of dual atoms of $D$ and the set of maximal elements of the ordered set $\Ji(D)$.
\end{lemma}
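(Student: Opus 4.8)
The plan is to reduce the statement to a computation inside a down-set lattice by way of Birkhoff duality. Since $D$ is a finite distributive lattice, $D \iso H(P)$, where $P = \Ji(D)$ and $H(P)$ denotes the lattice of down-sets of $P$ ordered by inclusion; under this isomorphism the top element $1 \in D$ corresponds to $P$ itself, and the ordered set $\Ji(D)$ is carried isomorphically onto the ordered set of principal down-sets of $P$. So it suffices to exhibit a one-to-one correspondence between the dual atoms of $H(P)$ and the maximal elements of $P$, for an arbitrary finite ordered set $P$, and then transport it back along $D \iso H(P)$.

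First I would check that if $p$ is a maximal element of $P$, then $X_p = P \setminus \set{p}$ is a dual atom of $H(P)$: deleting a maximal element from the down-set $P$ again leaves a down-set, and any down-set strictly containing $X_p$ must contain $p$, hence equal $P$. The assignment $p \mapsto X_p$ is visibly injective, so the only real work is surjectivity. Given a dual atom $X$ of $H(P)$, I would pick an element $p$ that is \emph{minimal} in the nonempty set $P \setminus X$; then $X \cup \set{p}$ is again a down-set, because anything strictly below $p$ avoids $P \setminus X$ by minimality of $p$ and so lies in $X$. Since $X \subseteq X \cup \set{p} \subseteq P$ with $X \neq X \cup \set{p}$, the dual-atom property of $X$ forces $X \cup \set{p} = P$, i.e.\ $X = X_p$. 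Finally $p$ is maximal in $P$: if $p < q$, then $q \notin X$ (as $X$ is a down-set missing $p$), so $q \in P \setminus X = \set{p}$, whence $q = p$.

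The one point that needs care is this surjectivity step: one must adjoin a \emph{minimal} element of the complement $P \setminus X$ (and not, say, a maximal one) so that the enlarged set stays a down-set; beyond that everything is routine bookkeeping. If one prefers to argue inside $D$ itself, the alternative is to note that a dual atom $d$ of $D$ is meet-irreducible, hence corresponds to a join-irreducible under the canonical bijection between meet-irreducibles and join-irreducibles of a finite distributive lattice, and then to verify that $d = \bigvee\bigl(\Ji(D) \setminus \set{p}\bigr)$ for a unique $p$, which is forced to be maximal in $\Ji(D)$ because join-irreducibles of a distributive lattice are join-prime. I expect the down-set argument to be the cleaner of the two to write out in full.
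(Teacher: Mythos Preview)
Your proof is correct and follows essentially the same route as the paper's: both invoke Birkhoff duality and set up the bijection sending a maximal $p$ to $P\setminus\set{p}$ (equivalently, to $\bigvee(\Ji(D)\setminus\set{p})$ in $D$), with the inverse sending a dual atom to the unique join-irreducible not below it. The paper's proof is a two-sentence sketch that merely asserts this uniqueness, whereas you supply the verification explicitly by working in the down-set lattice and isolating a minimal element of the complement.
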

\begin{proof}
By the Birkoff representation theorem for finite distributive lattices, to each dual atom $a$
of $D$ corresponds a unique $p \in \Ji(D)$ with $p \nleq a$, which is perforce maximal.
The inverse correspondence assigns to each maximal element of $\Ji(D)$ the join
of all the other elements of $\Ji(D)$, which is a dual atom of $D$.
\end{proof}

In the proof of Theorem~\ref{T:mainvar}(iii) we use the following result.

\begin{theorem}\label{T:principal}
Let $L$ be a finite lattice 
and $x < y$ be elements of $L$. 
Let $A$ be an antichain of size at least $2$ 
of join-irreducible congruences of $L$ with $\JJ{A} = \con{x,y}$. 
Then for each $\bga \in A$, 
there is a join-irreducible congruence~$\bgb$ on $L$
such that the congruence $\bga \jj \bgb$ 
is principal and join-reducible.
\end{theorem}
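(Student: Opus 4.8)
The plan is to trade the abstract representation $\JJ{A} = \con{x,y}$ for a concrete one read off a maximal chain of $[x,y]$, and then to hunt for a shortest subinterval of that chain whose congruence is join-reducible. First I would fix a maximal chain $x = z_0 \prec z_1 \prec \dots \prec z_n = y$ in $[x,y]$. Each $\con{z_{i-1},z_i}$ is generated by a prime interval, hence is join-irreducible, and $\con{x,y} = \con{z_0,z_1} \jj \dots \jj \con{z_{n-1},z_n}$. Two observations follow. (a)~$\con{x,y}$ is join-reducible: being the join of the antichain $A$, which has at least two elements, it cannot itself be join-irreducible. (b)~$\bga$ occurs on the chain, i.e.\ $\bga = \con{z_{p-1},z_p}$ for some $p$. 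Indeed, as $\Con L$ is distributive and $\bga$ is join-irreducible, $\bga \le \con{x,y} = \con{z_0,z_1} \jj \dots \jj \con{z_{n-1},z_n}$ forces $\bga \le \con{z_{p-1},z_p}$ for some $p$; but $\con{z_{p-1},z_p} \le \con{x,y} = \JJ{A}$, so $\con{z_{p-1},z_p}$ lies below some member of $A$, and as $A$ is an antichain with $\bga \le \con{z_{p-1},z_p}$, that member is $\bga$ itself; hence $\bga = \con{z_{p-1},z_p}$.

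Next I would construct $\bgb$. Among all subintervals $[z_i,z_j]$ with $i \le p-1$ and $j \ge p$ for which $\con{z_i,z_j}$ is join-reducible I would pick one, $[z_i,z_j]$, with $j-i$ minimal; this family is nonempty, since $[z_0,z_n] = [x,y]$ lies in it by~(a). As $\con{z_{p-1},z_p} = \bga$ is join-irreducible, $[z_i,z_j] \ne [z_{p-1},z_p]$, so $j \ge p+1$ or $i \le p-2$; assume the former, the latter being symmetric. Then $[z_i,z_{j-1}]$ still contains $[z_{p-1},z_p]$ and is shorter, so by minimality $\con{z_i,z_{j-1}}$ is join-irreducible. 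Since $\con{z_i,z_{j-1}} = \con{z_i,z_{i+1}} \jj \dots \jj \con{z_{j-2},z_{j-1}}$ and a join-irreducible equal to a finite join of elements must equal one of them, $\con{z_i,z_{j-1}} = \con{z_{k-1},z_k}$ for some $k$ with $i < k < j$. As $i < p < j$, the congruence $\bga = \con{z_{p-1},z_p}$ is among those joinands, so $\bga \le \con{z_{k-1},z_k}$; and since $\con{z_{k-1},z_k} \le \con{x,y} = \JJ{A}$ is join-irreducible, it lies below a member of $A$, which by the antichain property is $\bga$ again. Hence $\con{z_i,z_{j-1}} = \con{z_{k-1},z_k} = \bga$, and
\[
   \con{z_i,z_j} = \con{z_i,z_{j-1}} \jj \con{z_{j-1},z_j} = \bga \jj \con{z_{j-1},z_j} .
\]
Then $\bgb := \con{z_{j-1},z_j}$ works: it is join-irreducible, while $\bga \jj \bgb = \con{z_i,z_j}$ is principal and, by the choice of $[z_i,z_j]$, join-reducible.

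The hard part will be to secure at a single stroke a congruence that is principal, join-reducible, and a join of $\bga$ with a join-irreducible --- the two lattice-theoretic demands (join-reducibility, and being a join involving a join-irreducible) pull against being principal. Working along a maximal chain, passing to a shortest join-reducible subinterval, and peeling off its last prime interval secures all three; the subtle point is that this shortening pins the smaller congruence down to \emph{exactly} $\bga$ and not to some larger join-irreducible, and that is where the antichain hypothesis on $A$ together with the distributivity of $\Con L$ enter decisively.
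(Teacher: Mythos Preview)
Your proof is correct and follows essentially the same approach as the paper: both arguments pass to a maximal chain in $[x,y]$, identify $\bga$ as one of the prime-interval congruences along it, and locate an adjacent prime interval giving the required $\bgb$. The only cosmetic difference is the direction of the search---the paper expands outward to a \emph{maximal} subinterval with congruence equal to $\bga$ and then extends by one step, whereas you contract inward to a \emph{minimal} subinterval with join-reducible congruence and then peel off one step; the two procedures meet at the same place.
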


\begin{proof}
Let 
\[
   x=c_0 \prec c_1 \prec \dots \prec c_n=y
\] 
be a (maximal) chain $C$ in the interval $[x,y]$  
and let $\bgb_i = \con{c_i, c_{i+1}}$ for $0 \leq i < n$
Then $\bgb_i$ is a join-irreducible congruence of $L$ and
\[
   \JJm{\bgb_i}{0 \leq i < n} = \con{x,y} = \JJ{A}.
\]

Let $J \ci \set{0, 1, \dots n-1}$ so that
$\setm{\con{c_i, c_{i+1}}}{i \in J}$ 
are the maximal elements in the ordered set
$\setm{\con{c_i, c_{i+1}}}{0 \leq i < n}$.
Then 
\[
   A = \setm{\con{c_i, c_{i+1}}}{i \in J},
\] 
see for instance Corollary 111 in \cite{LTF}.

So let $\bga = \con{c_j, c_{j+1}}$. 
Let $[c_k, c_l]$, with $k \leq j$ and $j+1 \leq l$, 
be a maximal subinterval of $C$ 
with $\con{c_k, c_l} = \bga$.
We cannot have both $k = 0$ and $l = n$, 
otherwise, $\bga = \JJ A$, 
contradicting the assumptions on $A$.
Without loss of generality, 
let $l < n$ and define $\bgb = \con{c_l, c_{l+1}}$. 
By the definition of $l$,
it follows that $\bgb \nleq \bga$.
So $\bgb$ is join-irreducible and $\bga \jj \bgb$ is principal
(indeed, $\bga \jj \bgb = [c_k, c_{l+1}]$).
\end{proof}

\begin{corollary}\label{C:antichain}
Let $D$ be a finite distributive lattice 
with an antichain $A$ of join-irreducible elements 
with at least $3$ elements. 
If $\JJ{A} = \one$, then $D$ does not have a minimal representation.
\end{corollary}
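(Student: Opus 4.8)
The plan is a brief argument by contradiction resting on Theorem~\ref{T:principal}. I would assume, for contradiction, that $D$ has a minimal representation $L$, and identify $\Con L$ with $D$. Since $L$ is finite it is bounded, so the top congruence satisfies $\one = \con{0_L, 1_L}$, where $0_L$ and $1_L$ are the least and greatest elements of $L$. Hence $A$ is an antichain of at least three join-irreducible congruences of $L$ with $\JJ{A} = \con{0_L, 1_L}$. Fixing any $\bga \in A$ and applying Theorem~\ref{T:principal} with $x = 0_L$ and $y = 1_L$, I obtain a join-irreducible congruence $\bgb$ of $L$ for which $\bga \jj \bgb$ is principal and join-reducible.

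Next I would invoke minimality. By definition, $\Princ L = \Min L = \set{\zero, \one} \uu \Ji(\Con L)$, so the principal congruence $\bga \jj \bgb$ lies in this set. It is join-reducible, hence not join-irreducible; and $\bga \jj \bgb \ge \bga > \zero$, so it is not $\zero$ either. Therefore $\bga \jj \bgb = \one$, which exhibits $\one$ simultaneously as $\bga \jj \bgb$, a join of two join-irreducible congruences, and as $\JJ{A}$.

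Finally I would transport this back to $A$ using distributivity: in $\Con L \iso D$ every join-irreducible element is join-prime. Since $\bgb \le \JJ{A}$, there is $\bga' \in A$ with $\bgb \le \bga'$. For an arbitrary $a \in A$ we have $a \le \one = \bga \jj \bgb$, so join-primeness of $a$ yields $a \le \bga$ or $a \le \bgb \le \bga'$; as $A$ is an antichain, this forces $a = \bga$ or $a = \bga'$. Hence $A \subseteq \set{\bga, \bga'}$, contradicting $|A| \ge 3$, which finishes the argument.

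All the real content already sits inside Theorem~\ref{T:principal}; the step I would handle most carefully is the inference $\bga \jj \bgb = \one$, which rests on the observation that in a minimal representation the only join-reducible principal congruence is the top congruence, together with $\bga \jj \bgb \ge \bga > \zero$ to exclude $\zero$. Once this is in place, two uses of join-primeness — one applied to $\bgb$, one to each member of $A$ — collapse $A$ to at most two elements with no further case analysis; in particular one need not first identify $A$ with the set of maximal elements of $\Ji(D)$, although the hypothesis $\JJ{A} = \one$ does force that equality. Via Lemma~\ref{L:corr}, this corollary then yields Theorem~\ref{T:mainvar}(iii) at once, since three or more dual atoms of $D$ give an antichain $A$ of maximal join-irreducibles with $|A| \ge 3$ and $\JJ{A} = \one$.
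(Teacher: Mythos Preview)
Your argument is correct and follows the same route as the paper: invoke Theorem~\ref{T:principal} with $x=0_L$, $y=1_L$, then combine minimality with $|A|\ge 3$ to reach a contradiction. The paper's version is terser---it asserts ``since $A$ has at least $3$ elements, $a\jj b\neq\one$'' and stops---whereas you spell out the join-primeness step explicitly (first $\bgb\le\bga'$ for some $\bga'\in A$, then each $a\in A$ lies below $\bga$ or $\bga'$), which makes the use of $|A|\ge 3$ transparent and avoids reaching back into the proof of Theorem~\ref{T:principal} for the fact that $\bgb$ sits below a member of $A$.
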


\begin{proof}
Assume that  the finite lattice $L$ 
with bounds $o$ and $i$ provides 
a minimal representation of $D$, 
that is, $\Princ{L} = \set{\B{0}, \one} \uu \Ji(\Con{L})$ 
and there is an isomorphism between $D$ and $\Con{L}$. 
Let $Q \ci D$ correspond to $\Princ{L}$
under this isomorphism.

By Theorem~\ref{T:principal} applied to the interval $[o,i]$, 
for each $a \in A$ there is a $b \in \Ji(D)$, 
such that $a \jj b$ is a join-reducible element of $Q$. 
Since $A$ has at least $3$~elements, 
$a \jj b \neq \one$, a contradiction.
\end{proof}

\begin{corollary}
Let $D = \SB 3$, the eight element Boolean lattice. 
Then $D$ does not have a minimal representation.
\end{corollary}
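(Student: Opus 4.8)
The plan is to deduce this immediately from Corollary~\ref{C:antichain}. First I would recall the structure of $\SB 3 = \SC 2^3$: as a Boolean lattice on three atoms, its join-irreducible elements are precisely those three atoms $p_1, p_2, p_3$. Indeed, in a finite Boolean lattice every element is the join of the atoms below it, so the only join-irreducible elements are the atoms themselves (an atom covers $\zero$ and hence cannot be a join of strictly smaller elements).

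Next I would observe that $A = \set{p_1, p_2, p_3}$ is an antichain of join-irreducible elements of $\SB 3$ of size $3$, and that $p_1 \jj p_2 \jj p_3 = \one$, the top element of $\SB 3$. These are exactly the hypotheses of Corollary~\ref{C:antichain}: an antichain of join-irreducible elements with at least $3$ members whose join is $\one$.

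Applying Corollary~\ref{C:antichain} to this antichain $A$ then yields that $\SB 3$ does not have a minimal representation, which is the assertion. There is essentially no obstacle here; all the work has already been done in Theorem~\ref{T:principal} and Corollary~\ref{C:antichain}, and this statement is merely the concrete instance that recovers Illustration~\ref{I:B3} within the present framework. The only points to check are the two elementary facts about Boolean lattices noted above.
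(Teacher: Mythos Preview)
Your proposal is correct and takes essentially the same approach as the paper: the corollary is placed immediately after Corollary~\ref{C:antichain} and is meant to follow from it at once, exactly by recognizing that the three atoms of $\SB 3$ form an antichain of join-irreducible elements joining to~$\one$. The paper does not even write out a separate proof; your argument simply makes explicit the two elementary observations about $\SB 3$ that justify invoking Corollary~\ref{C:antichain}.
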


This corollary is Theorem 4 in G. Gr\"atzer and H. Lakser \cite{GLa}.

From Corollary~\ref{C:antichain} we get the following.

\begin{theorem}\label{T:three}
Let $D$ be a finite distributive lattice
with more than two dual atoms. 
Then $D$ does not have a minimal representation.
\end{theorem}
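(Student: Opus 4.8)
The plan is to reduce Theorem~\ref{T:three} to Corollary~\ref{C:antichain} by producing, from the hypothesis ``$D$ has more than two dual atoms,'' an antichain of at least three join-irreducible elements of $D$ whose join is $\one$. First I would invoke Lemma~\ref{L:corr}: the dual atoms of $D$ are in bijection with the maximal elements of the ordered set $\Ji(D)$, so more than two dual atoms means $\Ji(D)$ has at least three maximal elements, say $p_1, p_2, p_3, \dots$ (pairwise incomparable, being distinct maximal elements, hence an antichain). Set $A = \{p_1, p_2, p_3\}$ (or all the maximal elements of $\Ji(D)$, but three suffice).

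The one thing that needs checking is that $\JJ A = \one$, i.e.\ that the join of these three maximal join-irreducibles is already the top element. This is where the proof of Lemma~\ref{L:corr} is doing the real work: in a finite distributive lattice $\one = \JJ \Ji(D)$, and by Birkhoff's representation every element of $\Ji(D)$ lies below some maximal element of $\Ji(D)$; hence $\JJ \Ji(D) = \JJ \{\,\text{maximal elements of } \Ji(D)\,\} = \one$. If there are exactly three maximal elements we are done immediately; if there are more than three, I would note that we may still take $A$ to be the full set of maximal elements (Corollary~\ref{C:antichain} only asks for \emph{at least} $3$), or alternatively observe that dropping down to a three-element subantichain is harmless because Corollary~\ref{C:antichain} is about the existence of \emph{some} bad antichain and we just feed it the whole maximal antichain. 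Cleanest is: let $A$ be the set of all maximal elements of $\Ji(D)$; by Lemma~\ref{L:corr} $|A|$ equals the number of dual atoms, so $|A| \ge 3$, and $\JJ A = \one$ by the paragraph above.

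Then Corollary~\ref{C:antichain} applies verbatim to this $A$ and yields that $D$ has no minimal representation, which is the claim. I do not expect any genuine obstacle here — the theorem is essentially a corollary of a corollary, and the only content is the translation ``more than two dual atoms $\Leftrightarrow$ at least three maximal join-irreducibles with join $\one$,'' which Lemma~\ref{L:corr} and the Birkhoff correspondence already supply. If anything, the only thing to be careful about is making sure the antichain fed to Corollary~\ref{C:antichain} is genuinely an antichain (distinct maximal elements of an ordered set are incomparable) and that its join is literally $\one$ and not merely a dual atom; both are immediate from the structure of finite distributive lattices.

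\begin{proof}
By Lemma~\ref{L:corr}, the number of maximal elements of the ordered set $\Ji(D)$ equals the number of dual atoms of $D$, hence is at least $3$. Let $A$ be the set of all maximal elements of $\Ji(D)$. Distinct maximal elements are incomparable, so $A$ is an antichain, and $|A| \geq 3$. Since $D$ is a finite distributive lattice, $\JJ \Ji(D) = \one$, and every element of $\Ji(D)$ lies below some element of $A$; therefore $\JJ A = \JJ \Ji(D) = \one$. By Corollary~\ref{C:antichain}, $D$ does not have a minimal representation.
\end{proof}
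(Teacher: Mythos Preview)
Your proof is correct and follows essentially the same route as the paper: take the set of maximal elements of $\Ji(D)$, note via Lemma~\ref{L:corr} that it has at least three elements, observe that it is an antichain with join $\one$, and apply Corollary~\ref{C:antichain}. The only difference is that you spell out why $\JJ A = \one$ and why maximal elements form an antichain, whereas the paper states these without justification.
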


\begin{proof}
Let $M$ be the set of maximal elements of $\Ji(D)$. Then $M$ is an antichain in $D$
and $\JJ{M} = \one$. By Lemma~\ref{L:corr}, $M$ has at least three elements. Thus by
Corollary~\ref{C:antichain}, $D$ does not have a minimal representation.
\end{proof}

We have thus proved Theorem~\ref{T:mainvar}(iii). 

\section{Exactly two dual atoms, the construction}\label{S:TwoConstr}

\subsection{Preliminaries}
\label{S:preliminaries}

We will need the Technical Lemma for Finite Lattices, 
see G. Gr\"atzer \cite{gG14b}.

\begin{lemma}\label{L:technical}
Let $L$ be a finite lattice. 
Let $\bgd$ be an equivalence relation on $L$
with intervals as equivalence classes.
Then $\bgd$ is a congruence relation if{}f the following condition:
\begin{equation}\label{E:cover}
\text{if $x$ is covered by $y,z \in L$ 
and $x \equiv y\,(\textup{mod}\, \bgd)$,
then $z \equiv y \jj z\,(\textup{mod}\,\bgd)$}\tag{C${}_{\jj}$}
\end{equation}
and its dual holds.
\end{lemma}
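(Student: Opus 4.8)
\emph{Sketch of the intended argument.} The forward implication is immediate: if $\bgd$ is a congruence, $x \prec y$, and $x \equiv y \pmod{\bgd}$, then joining both sides with $z$ gives $z = x \jj z \equiv y \jj z \pmod{\bgd}$, which is exactly \eqref{E:cover}; applying the meet version of the substitution property to $x \equiv y$ gives the dual condition.

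For the converse, suppose $\bgd$ satisfies \eqref{E:cover} and its dual. Since $\bgd$ is already an equivalence relation whose blocks are intervals, to conclude that it is a congruence it suffices to establish the join substitution property, namely $a \equiv b \pmod{\bgd}$ implies $a \jj c \equiv b \jj c \pmod{\bgd}$ for all $c \in L$; the meet version then follows from the self-dual form of the hypotheses. The first reduction is to a covering pair. As the blocks of $\bgd$ are intervals, $a \equiv b \pmod{\bgd}$ is equivalent to $a \mm b \equiv a \jj b \pmod{\bgd}$, and this in turn holds iff every consecutive pair $u \prec v$ along a fixed maximal chain of $[a \mm b,\, a \jj b]$ satisfies $u \equiv v \pmod{\bgd}$; joining each such covering step with $c$ and composing the resulting congruences transitively, it is enough to treat the case $a \prec b$, $a \equiv b \pmod{\bgd}$.

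The core assertion is therefore: if $a \prec b$ and $a \equiv b \pmod{\bgd}$, then $a \jj c \equiv b \jj c \pmod{\bgd}$ for every $c \in L$. Writing $d = a \jj c$, either $b \le d$ — in which case $b \jj c = d = a \jj c$ and there is nothing to prove — or $b \mm d = a$, and we must derive $d \equiv b \jj d \pmod{\bgd}$. I would prove this by induction on a suitable parameter built from the length of $[a,d]$ together with that of $[d,\, b \jj d]$, both finite since $L$ is finite. The step climbs a maximal chain from $a$ up to $d$: applying \eqref{E:cover} to the covering pair $a \prec b$ and the first cover $a \prec e$ of this chain yields $e \equiv b \jj e \pmod{\bgd}$; since the $\bgd$-block of $e$ is an interval, the possibly non-prime step $e \le b \jj e$ can be refined into prime steps and the process iterated, interleaving applications of the dual condition to restore covering position whenever an intermediate join overshoots, and invoking the inductive hypothesis on the strictly smaller residual configuration. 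Reassembling via transitivity and the interval-block property gives $d \equiv b \jj d$. With the join substitution property in hand, its meet-dual by the symmetric argument, and the interval-block hypothesis, $\bgd$ is a congruence.

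The delicate point — the reason the lemma is ``technical'' — is precisely this inductive transport: in a general (non-modular) finite lattice, $a \prec b$ does \emph{not} force $a \jj c' \prec b \jj c'$ for intermediate $c'$, so \eqref{E:cover} cannot be applied blindly cover-by-cover; one must interleave it with its meet-dual and repeatedly use that $\bgd$-blocks are intervals in order to keep the pairs being propagated in covering position. Once that bookkeeping is organized the remaining steps are routine.
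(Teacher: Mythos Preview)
The paper does not supply a proof of this lemma; it is quoted as the ``Technical Lemma for Finite Lattices'' from G.~Gr\"atzer~\cite{gG14b} and used as a black box. So there is no in-paper argument to compare against, and I comment only on your sketch.

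The forward direction and the reduction to a covering pair $a\prec b$ are correct. In the inductive core, however, two things need tightening. First, the parameter ``length of $[a,d]$ together with that of $[d,\,b\jj d]$'' is too vague to make the recursion go through as you describe it; a choice that actually works is the cardinality of the interval $[a,\,b\jj d]$. With that parameter the step is clean: after (C$_\jj$) yields $e\equiv b\jj e$ for some $a\prec e\le d$, refine $[e,\,b\jj e]$ to a maximal chain $e=f_0\prec f_1\prec\cdots\prec f_k=b\jj e$ (the interval-block hypothesis gives $f_j\equiv f_{j+1}$) and apply the inductive hypothesis to each triple $(f_j,\,f_{j+1},\,f_j\jj d)$. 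Since $f_j\ge e>a$ while $f_{j+1}\jj d\le (b\jj e)\jj d=b\jj d$, the interval $[f_j,\,f_{j+1}\jj d]$ is strictly contained in $[a,\,b\jj d]$, so the induction applies and gives $f_j\jj d\equiv f_{j+1}\jj d$; transitivity then assembles $d=f_0\jj d\equiv f_k\jj d=b\jj d$.

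Second, your appeal to the dual condition ``to restore covering position whenever an intermediate join overshoots'' is a red herring in this half of the argument: the induction above uses only (C$_\jj$). The dual (C$_\mm$) enters only in the symmetric verification of meet-substitution. The fact that you reach for it suggests the bookkeeping has not yet been worked out; once you fix the induction parameter as above, no interleaving is needed and your outline becomes a complete proof.
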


\subsection{The construction}
\label{S:construction}
Let $P$ be a finite ordered set.
Our construction is based on the \emph{frame lattice}, 
$\Frame P$, of G.~Gr\"atzer \cite{gG13},
see Figure~\ref{F:newframe}
with the chains $\SC p = \set{0, a_p, b_p, 1}$ for $p \in P$. 
See \cite{gG13} for a detailed description; 
the diagram should suffice.

Then we consider the lattice $W(p, q)$ 
for $p < q \in P$ introduced in G. Gr\"atzer \cite{gGb},
see Figure~\ref{F:W}.
(Note that the lattice $S(p, q)$ 
used in \cite{gG13} would cause difficulties 
in the present construction.)
For all $p < q \in P$,
we insert $W(p, q)$ into $\Frame P$,
to form $\Framew P$, \emph{the frame lattice with $W$}, 
see Figure~\ref{F:Frame+W} for an illustration. 

Let $D$ be a finite distributive lattice 
with exactly two dual atoms and let $P = \Ji(D)$.
By Lemma~\ref{L:corr}, 
the ordered set $P$ has exactly two maximal elements, $p_0, p_1$.
Let $P_0 = \Dg{p_0}$ and $P_1 = \Dg{p_1}$.

Let $L_0$ be the lattice $\Framew P_0$, with zero $o$ and unit $i$, and let $L_1$, with zero $i'$ and unit $o'$, be the dual of the lattice $\Framew P_1$, where we denote by $x'$ that element of $L_1$ corresponding to $x \in \Framew{P_1}$ under the duality.

Now we are ready to construct the \emph{base lattice for $P$}, 
$\Base P$, as $L_0 \gsum L_1$,
see Figure~\ref{F:Base}. That is, $\Base{P} = L_0 \uu L_1$, with $i$ identified with $i'$ and with $x \leq i = i' \leq y$ for $x \in L_0$ and $y \in L_1$.

\begin{figure}[!t]
\centerline{\includegraphics[scale=.6]{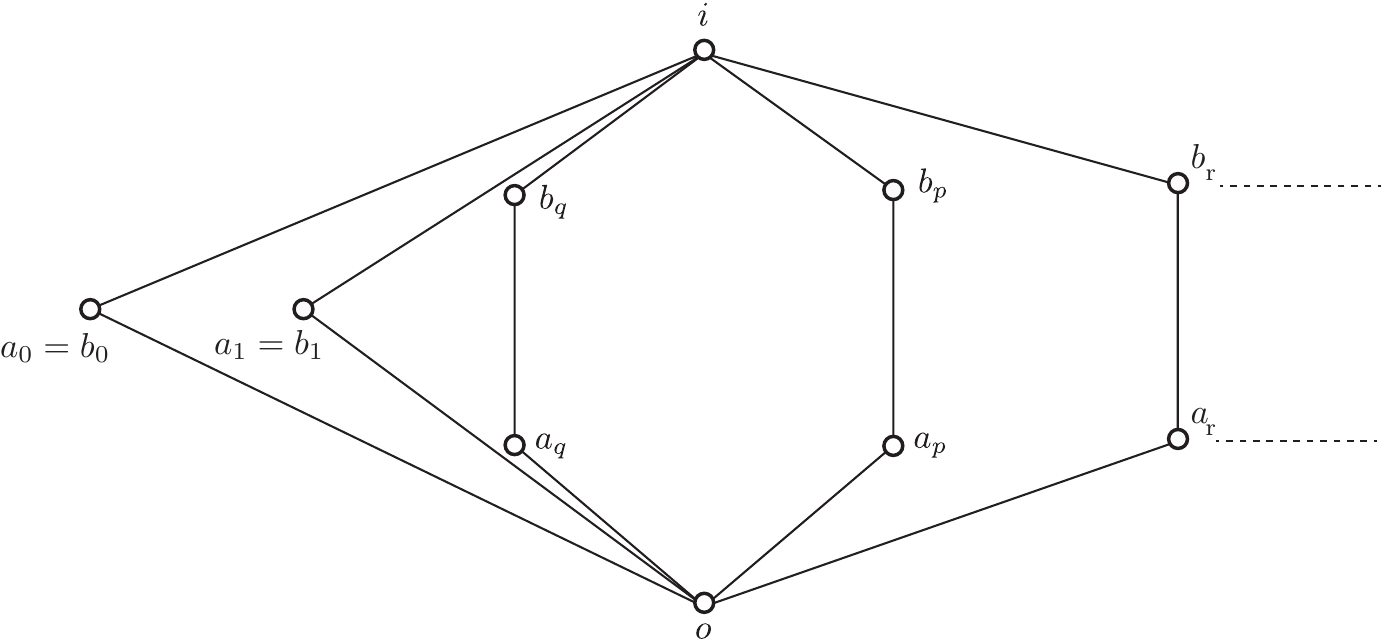}}
\caption{The frame lattice, $\Frame P$, 
with the chain $\SC p$ for $p \in P$}
\label{F:newframe}
\end{figure}

\begin{figure}[p]
\centerline{\includegraphics[scale=0.6]{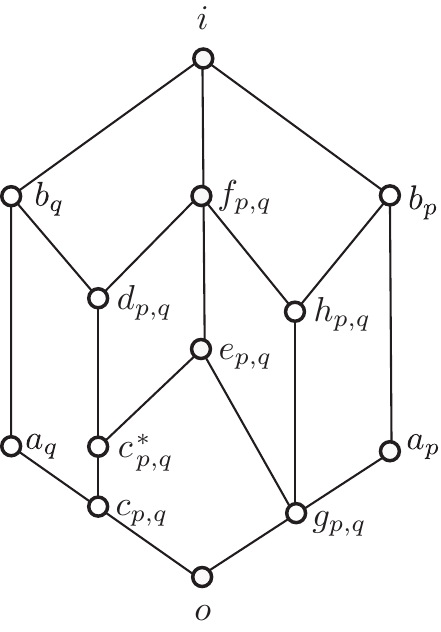}}
\caption{The lattice $W(p, q)$ for $p < q \in P$}
\label{F:W}

\bigskip

\centerline{\includegraphics[scale=.6]{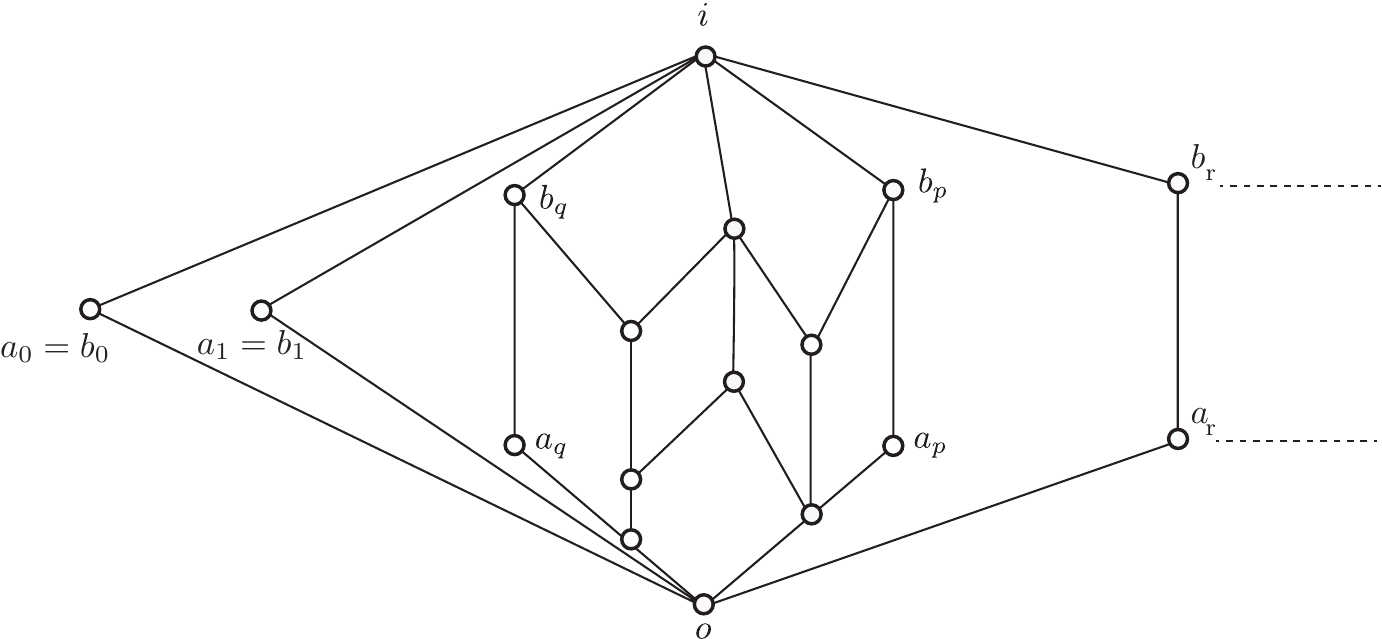}}
\caption{$\Framew P$: adding $W(p,q)$ to $\Frame P$ for $p < q \in P$}
\label{F:Frame+W}

\bigskip
\centerline{\includegraphics[scale=.69]{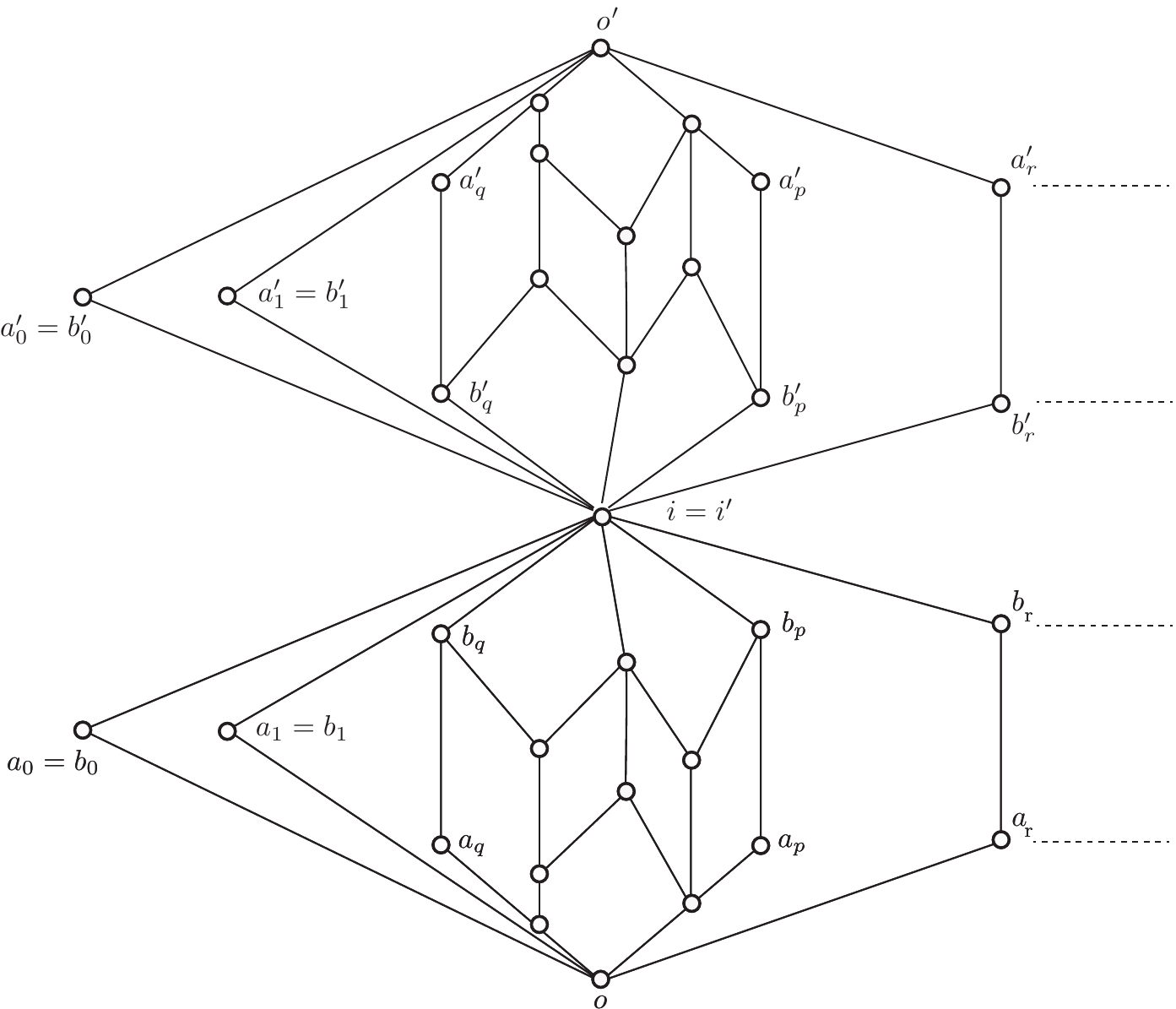}}

\caption{The base lattice of $P$, $\Base P$}
\label{F:Base}
\end{figure}

\begin{figure}[!t]
\centerline{\includegraphics[scale=.7]{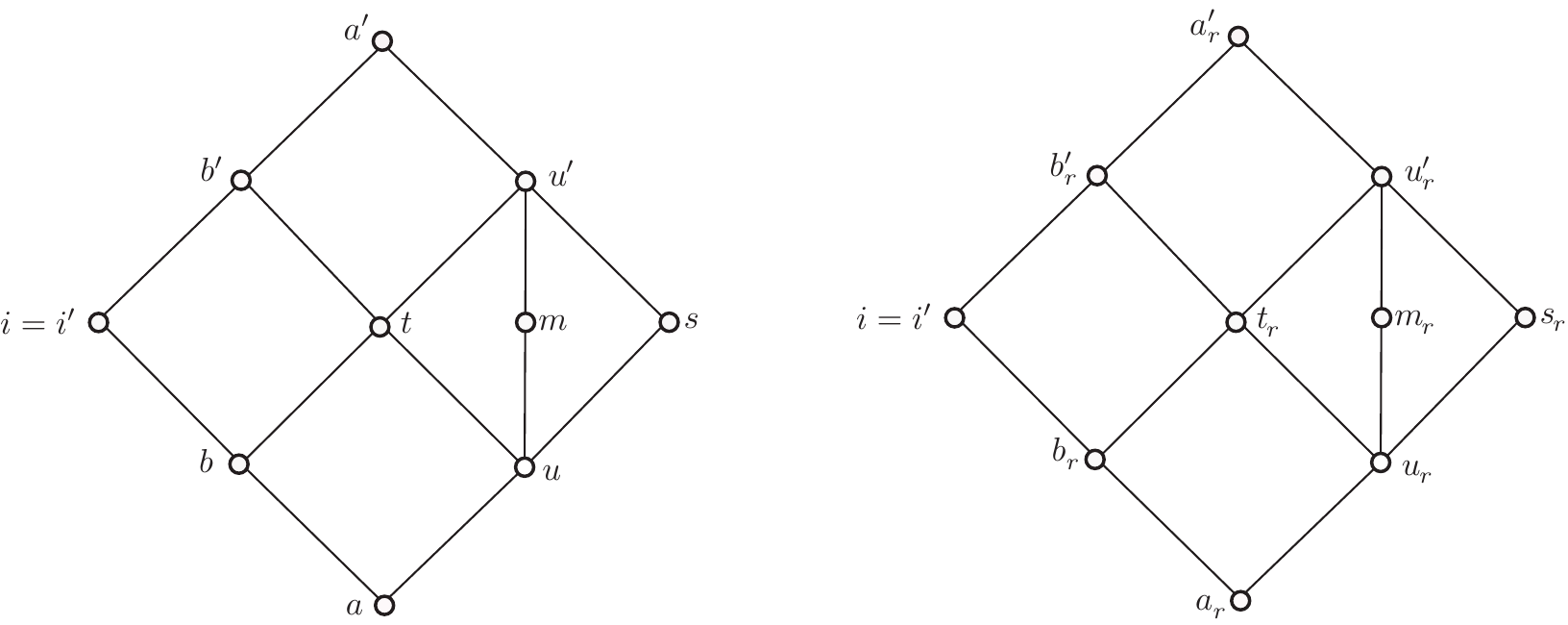}}
\caption{The bridge, $\Bridge$, 
and the $r$-bridge, $\Bridge(r)$ for $r \in P$}
\label{F:bridge}

\bigskip

\bigskip

\centerline{\includegraphics[scale=.8]{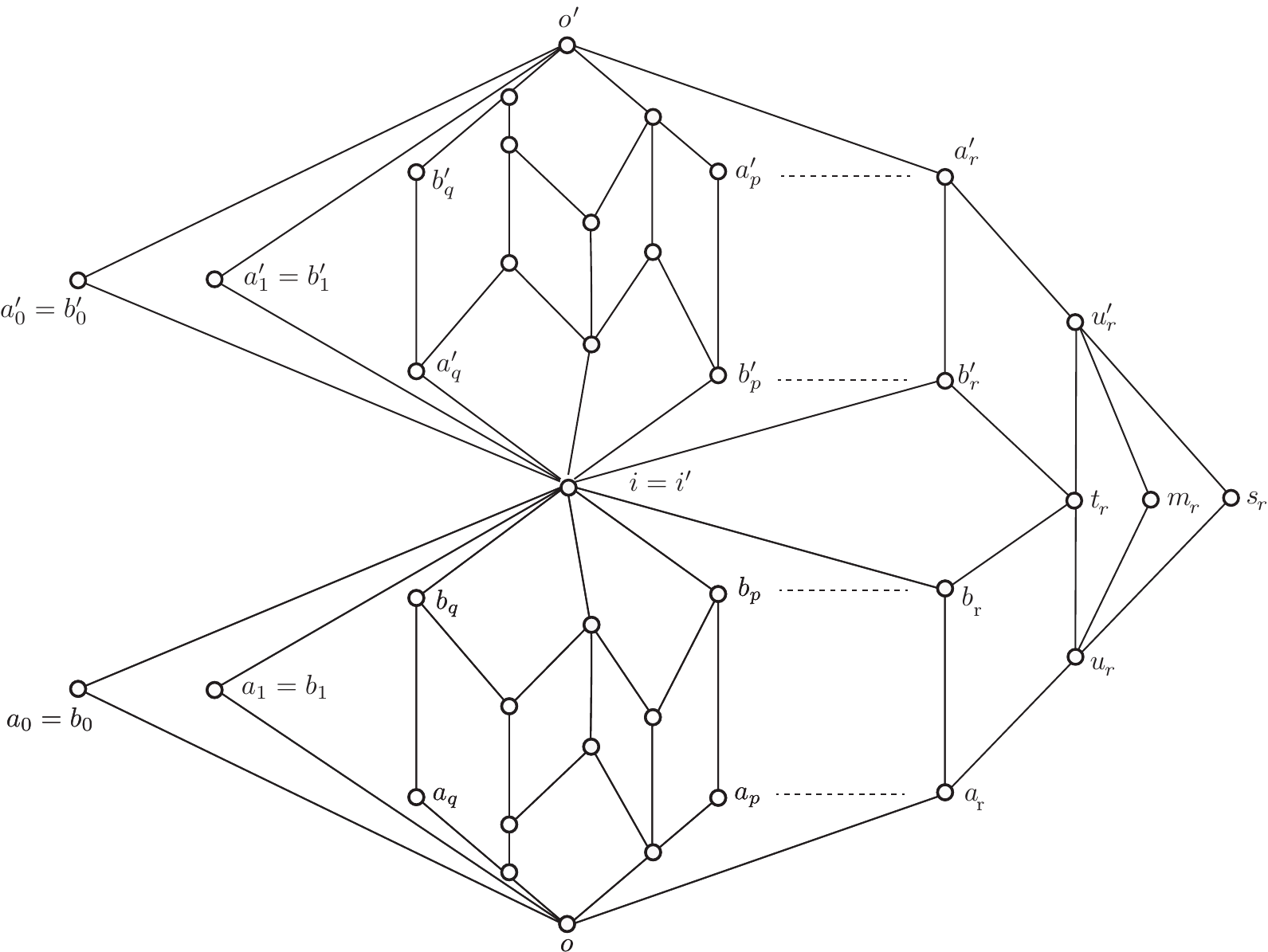}}
\caption{The base lattice of $P$, $\Base P$, 
with the $r$-bridge, $\Bridge(r)$}
\label{F:addbridge}
\end{figure}

We then show that $\Base{P}$ is a minimal representation of the ordered set $P_0 \dot{\cup} P_1$, the \emph{free union} of $P_0$ and $P_1$ with $p \incomp q$ for $p \in P_0$ and $q \in P_1$. This is easy since $\con{x,y} = \one_{\Base{P}}$ if $x < i <y$.

Now each element $r \in P_0 \ii P_1$ determines two distinct congruences of $\Base{P}$, one $\con{a_r, b_r}$, due to the sublattice $L_0$, and the other $\con{a'_r,b'_r}$, due to the sublattice $L_1$. Our main task will be to identify these two congruences, which we do with a \emph{bridge construction}.

A \emph{bridge lattice}, $\Bridge$,
is $\SC 3^2$ with an additional element $m$,
turning the covering square of the right corner into an $\SM 3$, 
see the first diagram in Figure~\ref{F:bridge}.
An \emph{$r$-bridge lattice}, $\Bridge(r)$, for $r \in P_0 \ii P_1$,
is a bridge lattice with the elements subscripted with $r$,
see the second diagram of Figure~\ref{F:bridge}.

We then obtain the desired lattice $L$ 
for Theorem~\ref{T:mainvar}(ii)
by adding a bridge for each $r \in P_0 \ii P_1$ to the base lattice $\Base P$
by forming the disjoint union of $\Base P$ and $\Bridge(r)$,
and then identifying the five elements 
$a_r, b_r, i = i', b_r', a_r'$, see~Figure~\ref{F:addbridge}. We, of course, first must show that adding a bridge results in a lattice. Then it is clear that adding a bridge will identify $\con{a_r, b_r}$ with $\con{a'_r, b'_r}$ for each $r \in P_0 \ii P_1$. Our major task will then be to show that no other congruences collapse and that all principal congruences distinct from $\one_L$ remain join-irreducible. We do this in the remainder of the paper.

\subsection{$L$ is a lattice}
\label{S:lattice}

In Sections~\ref{S:fus}--\ref{S:theorem}, 
we present the computations
showing that~$L$ provides a minimal representation for $D$,
as stated in Theorem~\ref{T:mainvar}(ii). We conclude the present section by pointing out that $L$ is, indeed, a lattice.

We start with the frame lattice $\Frame P$, 
which is obviously a lattice.
In the next step, we add $W(p,q)$ to $\Frame P$, 
for $p < q \in P$, to obtain $\Framew P$.
It~was proved in G. Gr\"atzer \cite{gGb} that $\Framew P$
is a lattice (see also G.~Gr\"atzer \cite{gG13}).
Now we need an easy statement from the folklore:

\begin{lemma}\label{L:threecover}
Let $K$ be a lattice and let $a \prec c \prec b$ in $K$.
Let $K^+ = K \uu \set{u}$ 
and define $u \mm b = a$ and $u \jj b = c$. Then
$K^+$ is a lattice extension of $K$ and,
for $x \in K$,
\[
   u \jj x = 
      \begin{cases}
      u &\text{for $x \leq a$;}\\
      b \jj x, &\text{otherwise,}
      \end{cases}
\]
and dually.
\end{lemma}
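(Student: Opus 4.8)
The plan is to write down the partial order on $K^+$ explicitly, verify directly that it is a lattice order by computing all meets and joins, and then read off the displayed formula; the dual assertion will follow by the order-dual of the same argument. So first I would fix the order on $K^+$: keep the order of $K$ unchanged, declare $x < u$ for every $x \in K$ with $x \leq a$, declare $u < y$ for every $y \in K$ with $y \geq b$, and close under transitivity. The only point needing checking here is antisymmetry, and this reduces to the fact that no element of $K$ lies both above $b$ and below $a$, which holds since $a < b$; transitivity through $u$ is immediate for the same reason. One then checks that the prescribed relations around $u$ hold — in fact $a \prec u \prec b$ and $u$ is incomparable to $c$ — so that $K^+$ is just $K$ with the three-element interval $[a,b]$ enlarged to a copy of $\SC 2^2$.

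Next I would verify that every pair in $K^+$ has a meet and a join. For a pair $x, y \in K$ this is the step where the construction has to behave: $u$ is an upper bound of $\set{x,y}$ only when $x,y \leq a$, in which case $x \jj y$ (computed in $K$) is already $\leq a < u$, so $u$ is never a \emph{minimal} upper bound of a pair from $K$; dually, it is never a maximal lower bound. Hence the meet and join of $x$ and $y$ are unchanged, so $K$ is a sublattice of $K^+$, which is the ``lattice extension'' claim. For a pair $u,x$ with $x \in K$: if $x \leq a$ then $x \leq u$, so $u \jj x = u$; if $x \not\leq a$ then $x \not\leq u$, and the common upper bounds of $u$ and $x$ in $K^+$ are exactly the elements of $K$ lying above both $b$ and $x$, that is, above $b \jj x$, whence $u \jj x = b \jj x$. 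This proves at once that $K^+$ is a lattice and that the displayed formula holds. The meet $u \mm x$ is computed in the same way with the roles of $a$ and $b$, and of $\leq$ and $\geq$, interchanged, giving $u \mm x = u$ for $x \geq b$ and $u \mm x = a \mm x$ otherwise; since the whole situation is carried into itself by passing to the dual lattice and swapping $a$ and $b$, this is exactly the dual of the displayed formula.

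The only genuinely delicate point is the one flagged above: that adjoining $u$ neither disturbs the existing meets and joins of $K$ nor creates unexpected bounds. Once the order on $K^+$ is written down, this is settled by the single observation that $u$ can never be a minimal upper bound or a maximal lower bound of a pair of elements of $K$. Everything else is a routine case analysis according to whether $x \leq a$, respectively $x \geq b$.
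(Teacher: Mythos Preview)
Your argument is correct and complete. The paper itself does not supply a proof of this lemma at all: it is introduced with the phrase ``an easy statement from the folklore'' and stated without proof, and later passages (Section~\ref{S:Admissible}) confirm the intended meaning, namely that $u$ is adjoined as a relative complement of $c$ in the interval $[a,b]$. Your reconstruction of the order on $K^+$, the check that $K$ remains a sublattice because $u$ can never be a minimal upper bound or maximal lower bound of a pair from $K$, and the case split for $u \jj x$ according to whether $x \leq a$ are exactly the routine verifications one would expect for such a folklore statement, so there is nothing to compare against and nothing to correct.
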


We apply Lemma~\ref{L:threecover} 
five times for each $r \in R$, that is, we add $t_r$, $u_r$, $u'_r$, $s_r$, and $m_r$ successively, to conclude that 
the lattice $L$ obtained for Theorem~\ref{T:mainvar}(ii) 
in the previous section is a lattice.

\section{Fusion and splitting in ordered sets}\label{S:fus}

We present two constructions on ordered sets that will enable us to apply the Bridge Theorem.

The first construction, \emph{fusion}, is applicable to any ordered set. The second construction, \emph{splitting}, is applicable only to ordered sets of a very special kind---including those that occur here.

\subsection{Fusion}\label{S:fusion}
Let $P$ be an arbitrary ordered set, and let $A$ be a nonempty convex subset of $P$. We define an ordered set $\Fuse(P,A)$ that is obtained in a natural manner by fusing the subset $A$ to a single element $\Fused_A$; if there is no danger of confusion,
we write $\Fused$ for $\Fused_A$.  
That is, we let 
\[
   F= \Fuse(P,A) = (P-A)\uu\set{\Fused}
\] 
and define an order on $F$. We work with the strict order $<_{F}$ rather than $\leq_{F}$, to make the definition easier to state.
For $x \in P-A$, we set
\begin{equation}\label{E:ordA}
\Fused <_F x \text{ if } a <_P x \text{ for some } a \in A,
\end{equation}
\begin{equation}\label{E:dordA}
x <_F \Fused  \text{ if } x <_P a \text{ for some } a \in A,
\end{equation}
and, for $x$, $y \in P-A$,
\begin{equation}
x <_F y 
\begin{cases}
\text{ if } x <_P y\label{E:ord}\\
\text{ or if } x <_P a_1 \text{ and } a_2 <_P y 
\text{ for some } a_1, a_2 \in A.
\end{cases}  
\end{equation}

We define a mapping $\gy_A \colon P \to \Fuse(P,A)$ by setting $\gy_A(a)= \Fused$ for $a \in A$
and $\gy_A(x) = x$ for $x \nin A$.

\begin{lemma}\label{L:fus}
The relation $<_F$ is a strict order relation on $F =  \Fuse(P,A)$ and $\gy_A \colon P \to \Fuse(P,A)$ is an isotone map.
\end{lemma}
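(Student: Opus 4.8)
The plan is to verify the two assertions of Lemma~\ref{L:fus} separately, starting with the easier one. For isotonicity of $\gy_A$, I would take $x \leq_P y$ and show $\gy_A(x) \leq_F \gy_A(y)$ by a short case analysis on whether each of $x,y$ lies in $A$: if both lie in $A$ the images coincide; if $x \in A$ and $y \notin A$ then $a \leq_P y$ for $a = x \in A$ (or $x = y$, handled separately since $A$ is convex), giving $\Fused \leq_F y$ by \eqref{E:ordA}; the case $x \notin A$, $y \in A$ is dual via \eqref{E:dordA}; and if neither lies in $A$ then $x <_P y$ gives $x <_F y$ by the first clause of \eqref{E:ord}. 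Throughout I would be careful that $\leq_F$ means $<_F$ or equality, so the reflexive case needs no work.

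The substance is showing $<_F$ is a strict order, i.e.\ irreflexive and transitive (antisymmetry being automatic for a strict order once transitivity and irreflexivity are in hand, since $x <_F y <_F x$ would give $x <_F x$). Irreflexivity: for $x \in P - A$, none of the clauses can yield $x <_F x$ — the first clause of \eqref{E:ord} fails by irreflexivity of $<_P$, and the second clause would force $x <_P a_1$ and $a_2 <_P x$ for some $a_1, a_2 \in A$, whence $a_2 <_P a_1 <_P$-above-and-below $x$ forces $x$ to sit strictly between two elements of $A$, so $x \in A$ by convexity of $A$, a contradiction; and $\Fused <_F \Fused$ is simply not among the defining clauses. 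Transitivity is the main obstacle: I must check $x <_F y$ and $y <_F z$ imply $x <_F z$ across all the combinations of which of $x,y,z$ equal $\Fused$. The mixed cases are where the bookkeeping bites — e.g.\ $x <_F \Fused$ and $\Fused <_F z$ (both $x,z \in P-A$) unfolds to $x <_P a_1$ and $a_2 <_P z$ for some $a_1,a_2 \in A$, which is exactly the second clause of \eqref{E:ord} giving $x <_F z$; while $x <_F y$ and $y <_F \Fused$ with $x,y \in P-A$ splits according to which clause gave $x <_F y$, and in the ``crossing'' subcase one chains $a_2 <_P y <_P a$ and uses convexity of $A$ to place $y$ in $A$, contradicting $y \in P-A$, so only the $x <_P y <_P a$ subcase survives and yields $x <_F \Fused$. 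The remaining cases ($\Fused <_F y <_F z$, $x <_F y$ with all three in $P-A$, etc.) are handled by the same two moves: either directly chaining $<_P$ or $<_F$, or invoking convexity of $A$ to collapse a would-be element of $A$ and discard an impossible subcase.

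I would organize transitivity as a table indexed by the position of $\Fused$ among $\{x,y,z\}$ (which, if any, of the three is $\Fused$), reducing to at most a handful of genuinely distinct configurations once symmetry under the order-duality of the definition is used. The only tool beyond unwinding definitions is the convexity hypothesis on $A$, which I expect to invoke precisely in those subcases where a crossing pair $x <_P a_1$, $a_2 <_P y$ meets another relation and would otherwise trap an outside element between two elements of $A$; recognizing and discharging those subcases cleanly is the part requiring care, but none of it is deep.
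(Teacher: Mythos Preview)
Your proposal is correct and follows essentially the same route as the paper: a case analysis on which of the entries is $\Fused$, with convexity of $A$ as the sole nontrivial tool. The one organizational difference is that you prove irreflexivity carefully (using convexity to rule out the second clause of \eqref{E:ord} giving $x <_F x$) and then derive asymmetry from irreflexivity plus transitivity, whereas the paper declares antireflexivity ``clear'' and proves antisymmetry by a separate case analysis---but the convexity arguments involved are identical, so this is a matter of packaging rather than substance.
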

\begin{proof}
Clearly, $<_F$ is antireflexive.

We first show that $<_F$ is antisymmetric. Let $x \neq y \in F$; we show that $x <_F y$ and $y <_F x$ cannot both hold.

First, if $x = \Fused$, then $y \in P-A$, and there exist $a_1 \in A$ with $a_1 <_P y$ and $a_2 \in A$ with $y <_P a_2$. Since $y \nin A$, these contradict the convexity of $A$.

If $y = \Fused$, we have the same argument with $x$ and $y$ interchanged.

We are then left with the cases $x \neq \Fused$ and $y \neq \Fused$. Now if $x <_P y$, then by the antisymmetry of $<_P$, we cannot have $y <_P x$. Then there are $a_1, a_2 \in A$, with $y <_P a_1$ and $a_2  <_P x$, that is, with $a_2 <_P x <_P y <_P a_1$, contradicting the convexity of $A$, since $x, y \nin A$.

 If $y <_P x$, we just exchange the roles of $x$ and $y$. 

We are finally left with the case where $x \nless_P y$ and $y \nless_P x$. Then there are $a_1, a_2, a_3, a_4 \in A$ with $x <_P a_1, a_2 <_P y$, $y <_P a_3$, and $a_4 <_P x$. Then for instance, $a_4 <_P x <_P a_1$, again  contradicting the convexity of $A$.

Consequently, $<_F$ is antisymmetric.

Finally, we establish transitivity.
So let $x, y, z \in F$ with $x <_F y <_F z$.

We first consider the case $x = \Fused$. Then $y, z \nin A$ and there is an $a \in A$ with $a <_P y$. By the convexity of $A$, $y <_F z$ cannot follow from the second case in~\eqref{E:ord}. Then $y <_P z$ and so $a <_P z$, that is, $x= \Fused <_F z$.

If $z = \Fused$, we use the dual argument.

If $y = \Fused$, then by \eqref{E:ordA}, \eqref{E:dordA}, and the second case in \eqref{E:ord}, 
we get $x <_F z$.

If one of $x, y, z$ is $\Fused$, we then have transitivity. 
So let $x, y, z$ all differ from $\Fused$. If~$x <_P y$ and $y <_P z$, then $x <_P z$, and so $x <_F z$. On the other hand, if, say, $x \nless_P y$, then the second case of \eqref{E:ord} holds, and so $x <_F \Fused <_F y < _F  z$. By transitivity whenever one of the three entries is $\Fused$, we get first $\Fused <_F z$, and then $x <_F z$. Similarly, $x <_F z$ if $y \nless_P z$. 

Thus transitivity has been established, concluding the proof that $<_F$ is an order relation.
  
It is immediate that $\gy_A$ is isotone. Indeed, let $x, y \in P$ with $x <_P y$. If $x, y \in A$, then $\gy_A(x) = \Fused = \gy_A(y)$. If $x \in A$ and $y \nin A$, then by \eqref{E:ordA}, $\gy_A(x) = \Fused <_F y = \gy_A(y)$. Similarly, $\gy_A(x) <_F \gy_A(y)$ if $y \in A$ and $x \nin A$. If $x, y \nin A$, then by the first case of \eqref{E:ord}, $\gy_A(x) = x <_F y = \gy_A(y)$.
\end{proof}

The ordered set $\Fuse(P,A)$ is the "freest" ordered set with the convex subset $A$ fusing to a single element. This is formalized by the following Universal Mapping Property.

\begin{lemma}\label{L:ump}
Let $P, Q$ be ordered sets, let $A$ be a nonempty convex subset of~$P$, and let $\gf \colon P \to Q$ be an isotone map with $\gf(a_1) = \gf(a_2)$ for all $a_1$ $a_2 \in A$. Then there is an isotone map $\gf' \colon \Fuse(P,A) \to Q$ with
\begin{equation}\label{E:ump}
\gf'\gy_A = \gf,
\end{equation}
and $\gf'$ is determined uniquely by \eqref{E:ump}.
\end{lemma}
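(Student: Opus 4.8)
The plan is to define $\gf'$ by the only formula that can possibly work and then verify isotonicity by a case analysis mirroring the definition of $<_F$. First I would set $\gf'(\Fused) = \gf(a)$ for an arbitrarily chosen $a \in A$; this is well defined because $A$ is nonempty and $\gf$ is constant on $A$ by hypothesis. For $x \in P - A$ I would set $\gf'(x) = \gf(x)$. Then \eqref{E:ump} holds by inspection: $\gf'\gy_A(a) = \gf'(\Fused) = \gf(a)$ for $a \in A$, and $\gf'\gy_A(x) = \gf'(x) = \gf(x)$ for $x \nin A$. Uniqueness is then immediate, since $\gy_A$ is onto $\Fuse(P,A)$, so any map $\gf'$ satisfying \eqref{E:ump} is forced to take exactly these values.

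The substantive step is checking that $\gf'$ is isotone. Assume $x <_F y$ in $F = \Fuse(P,A)$; I would show $\gf'(x) \leq \gf'(y)$ in $Q$ by running through the ways $<_F$ can arise, as recorded in \eqref{E:ordA}, \eqref{E:dordA}, and \eqref{E:ord}. If $x = \Fused$ and $y \in P - A$, then $a <_P y$ for some $a \in A$, so $\gf'(x) = \gf(a) \leq \gf(y) = \gf'(y)$. If $y = \Fused$ and $x \in P - A$, the argument is dual. If $x, y \in P - A$ with $x <_P y$, then $\gf'(x) = \gf(x) \leq \gf(y) = \gf'(y)$; and if instead $x <_P a_1$ and $a_2 <_P y$ for some $a_1, a_2 \in A$, then $\gf'(x) = \gf(x) \leq \gf(a_1) = \gf(a_2) \leq \gf(y) = \gf'(y)$, where the middle equality is precisely the hypothesis that $\gf$ is constant on $A$.

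I do not expect a genuine obstacle here: this is a bookkeeping argument that pushes the definition of $<_F$ through $\gf$. The only places the hypotheses on $A$ are used are in the second clause of \eqref{E:ord} (constancy of $\gf$ on $A$) and, implicitly, in the fact that $\Fuse(P,A)$ is already known to be an ordered set by Lemma~\ref{L:fus}; both are in hand, so the verification should go through smoothly.
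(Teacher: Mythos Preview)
Your proposal is correct and follows essentially the same approach as the paper: define $\gf'$ by the forced formula and check isotonicity against the clauses \eqref{E:ordA}, \eqref{E:dordA}, \eqref{E:ord}. The paper's proof is simply a terser version of yours, citing those three equations without writing out the case analysis you give explicitly.
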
 
\begin{proof}
Let $a \in A$ be arbitrary. Then we must have $\gf'(\Fused) = \gf(a)$. For $x \nin A$, we must have $\gf'(x) = \gf(x)$.
So the isotone property for $\gf'$ 
follows from \eqref{E:ordA}, \eqref{E:dordA} and~\eqref{E:ord}. 
\end{proof}

\begin{lemma}\label{L:isom}
Let $P$ and $Q$ be ordered sets, and let $A$ be a nonempty convex subset of $P$. Let $\gf \colon P \to Q$ be a surjective isotone map with $\gf(a_1)=\gf(a_2)$ for all $a_1, a_2 \in A$.  Assume that, for all $x, y \in P$ with $x \nleq_P y$ and with $\gf(x) \leq \gf(y)$, there are $a_1, a_2 \in A$ with $x \leq_P a_1$ and $a_2 \leq_P y$. Then the isotone map $\gf' \colon \Fuse(P,A) \to Q$ determined by the condition $\gf'\gy_A = \gf$ is an isomorphism.
\end{lemma}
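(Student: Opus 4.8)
The plan is to show that $\gf'$ is a bijection and that its inverse is isotone; combined with Lemma~\ref{L:ump} (which gives us that $\gf'$ itself is isotone and well-defined), this yields that $\gf'$ is an order isomorphism. Surjectivity of $\gf'$ is immediate from the surjectivity of $\gf$ together with the identity $\gf' \gy_A = \gf$, since $\gy_A$ is already surjective onto $\Fuse(P,A)$. So the real content is injectivity and the isotonicity of the inverse, and both of these will follow from a single claim: for $u, v \in \Fuse(P,A)$, if $\gf'(u) \leq_Q \gf'(v)$ then $u \leq_F v$.

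First I would set up the claim by lifting $u$ and $v$ back to $P$. Pick representatives $x, y \in P$ with $\gy_A(x) = u$ and $\gy_A(y) = v$ (if $u = \Fused$ choose $x \in A$, otherwise $x = u$, and similarly for $v$). Then $\gf(x) = \gf'(u) \leq_Q \gf'(v) = \gf(y)$. Now I would split into cases according to whether $x \leq_P y$. If $x \leq_P y$, then $u = \gy_A(x) \leq_F \gy_A(y) = v$ by the isotonicity of $\gy_A$ (Lemma~\ref{L:fus}), and we are done. If $x \nleq_P y$, then the hypothesis of the lemma applies: there are $a_1, a_2 \in A$ with $x \leq_P a_1$ and $a_2 \leq_P y$. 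From here I would chase through the definition of $<_F$ and the fact that $\gy_A$ collapses all of $A$ to $\Fused$: we get $u = \gy_A(x) \leq_F \gy_A(a_1) = \Fused = \gy_A(a_2) \leq_F \gy_A(y) = v$, using isotonicity of $\gy_A$ on the two inequalities $x \leq_P a_1$ and $a_2 \leq_P y$. Either way $u \leq_F v$, establishing the claim.

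With the claim in hand I would conclude as follows. For injectivity: if $\gf'(u) = \gf'(v)$, then $\gf'(u) \leq \gf'(v)$ and $\gf'(v) \leq \gf'(u)$, so by the claim $u \leq_F v$ and $v \leq_F u$, hence $u = v$ by antisymmetry of $<_F$ (Lemma~\ref{L:fus}). For the inverse map: since $\gf'$ is a bijection, $(\gf')^{-1}$ exists as a function, and given $s \leq_Q t$ in $Q$, writing $s = \gf'(u)$, $t = \gf'(v)$, the claim gives $u \leq_F v$, i.e. $(\gf')^{-1}(s) \leq_F (\gf')^{-1}(t)$. Thus both $\gf'$ and its inverse are isotone, so $\gf'$ is an isomorphism.

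I do not expect any serious obstacle here; the proof is a routine UMP-style argument and the only place requiring a little care is handling the degenerate subcases when $u$ or $v$ equals $\Fused$ (so that the chosen representative lies in $A$), but these are absorbed cleanly by the observation that $\gy_A$ sends every element of $A$ to $\Fused$ and that $x \leq_P y$ with $x$ or $y$ in $A$ already forces the desired relation in $F$. The one conceptual point worth flagging is that the extra hypothesis relating $\nleq_P$ to $\leq_Q$ via elements of $A$ is exactly what is needed to rule out the possibility that $\gf'$ identifies two $F$-incomparable elements or creates a strictly larger order on $Q$ than on $F$; without it, $\gf'$ would be isotone and bijective but possibly not an isomorphism.
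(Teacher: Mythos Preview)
Your proposal is correct and follows essentially the same approach as the paper's proof: both establish that $\gf'$ is surjective from the surjectivity of $\gf$, and then prove the key claim that $\gf'(u) \leq \gf'(v)$ implies $u \leq_F v$, which simultaneously yields injectivity and isotonicity of the inverse. The only cosmetic difference is that the paper carries out an explicit three-way case split on whether $x$ or $y$ equals $\Fused$, whereas you lift to representatives in $P$ via $\gy_A$ and argue uniformly through the isotonicity of $\gy_A$; the logical content is identical.
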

\begin{proof}
As before, we set $F = \Fuse(P,A)$. The map $\gf'$ is surjective since the map $\gf$ is. Thus we need only show that, for $x, y \in F$, whenever $\gf'(x) \leq \gf'(y)$, then $x \leq_F y$. We may assume that $x, y$ are distinct.

We first consider the case $x = \Fused$. Let $a$ be any  element of $A$. Then $\gf'(x) = \gf(a)$ and $\gf'(y) = \gf(y)$.   So either $a \leq_P y$ or there is an $a_2 \in A$ with $a_2 \leq_P y$. In either event, we have $\Fused \leq_F y$.

A similar argument applies if $y = \Fused$: either $x \leq_P a$ or there is an $a_1 \in A$ with $x \leq_P a_1$. Then $x \leq_F \Fused$.

Otherwise, $x, y \in P-A$. Then $\gf'(x) = \gf(x)$ and $\gf'(y) = \gf(y)$. So $\gf(x) \leq \gf(y)$. If $x \leq_P y$, then $x \leq_F y$. If $x \nleq_P y$, then by our assumption on $\gf$, there are $a_1, a_2 \in A$ with $x \leq_P a_1$ and $a_2 \leq_P y$. Then again, $x \leq_F y$, concluding the proof.
\end{proof}

\subsection{Splitting}\label{S:Splitting}
We now turn to splitting. Let $P = P_0 \uu P_1$ be an ordered set where $P_0, P_1$ are downsets of $P$, neither a subset of the other. 
Let us assume that the subset $P_0 \ii P_1$ has 
a maximal element $a$.

We then split $a$ into two incomparable elements $a_0, a_1$, as follows.

Set $S = \Split(P,a) = (P-\set{a}) \uu \set{a_0,a_1}$. Define the (strict) order $<$ on $S$ by setting, for $j = 0, 1$,
\begin{equation}\label{E:Sa}
x <_S a_j \text{ if } x \in P-\set{a} \text{ and } x <_P a,
\end{equation}
and
\begin{equation}\label{E:dSa}
a_j <_S x \text{ if } x \in P_j -\set{a} \text{ and } a <_P x,
\end{equation}
and by setting
\begin{equation}\label{E:S}
x <_S y \text{ if } x,y \in P-\set{a} \text{ and } x <_P y.
\end{equation}
We define the mapping $\gh_a \colon \Split(P,a) \to P$ by setting $\gh_a \colon a_j \mapsto a$ for $j = 0, 1$ and $\gh_a \colon x \mapsto x$ if $x \neq a_0, a_1$.

\begin{lemma}\label{L:spl}
The relation $<_S$ is a strict order relation on $S = \Split(P,a)$ and $\gh_a \colon \Split(P,a) \to P$ is an isotone map.
\end{lemma}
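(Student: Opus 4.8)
The plan is to verify directly the three defining properties of a strict order for $<_S$ --- antireflexivity, antisymmetry, and transitivity --- and then to read off isotonicity of $\gh_a$. Two structural observations will be used repeatedly: the new elements $a_0$ and $a_1$ are never $<_S$-related to each other, since no defining clause of $<_S$ has both of its endpoints in $\{a_0,a_1\}$; and in \eqref{E:Sa} the element $a_j$ sits above \emph{every} old element below $a$ (irrespective of which $P_i$ it lies in), whereas \eqref{E:dSa} links $a_j$ upward only to the old elements of $P_j$ lying above $a$.

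Antireflexivity is immediate: $z <_S z$ for old $z$ would force $z <_P z$ by \eqref{E:S}, and for $z = a_j$ no clause produces $a_j <_S a_j$ because the old endpoint in \eqref{E:Sa} and \eqref{E:dSa} lies in $P$. For antisymmetry, suppose $x <_S y$ and $y <_S x$ and split on types: if both are old, \eqref{E:S} gives $x <_P y <_P x$; if one is $a_j$ and the other an old $w$, then \eqref{E:Sa} forces $w <_P a$ while \eqref{E:dSa} forces $a <_P w$; if both are new they are unrelated. Each possibility is contradictory or vacuous.

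The content is in transitivity: assume $x <_S y <_S z$ and derive $x <_S z$, splitting on whether the middle element $y$ is new or old. If $y = a_j$, then $x <_S a_j$ must come from \eqref{E:Sa} (so $x$ is old with $x <_P a$) and $a_j <_S z$ from \eqref{E:dSa} (so $z \in P_j - \{a\}$ with $a <_P z$); then $x <_P a <_P z$ and \eqref{E:S} gives $x <_S z$. If $y$ is old, there are four sub-cases according as $x <_S y$ arises from \eqref{E:S} or \eqref{E:dSa} and $y <_S z$ arises from \eqref{E:S} or \eqref{E:Sa}. Two of them reduce at once to transitivity of $<_P$; the sub-case with $x = a_j$ (from \eqref{E:dSa}, giving $a <_P y$) and $y <_P a$ (from \eqref{E:Sa}) is vacuous, since it would force $a <_P y <_P a$.

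The one genuine obstacle is the remaining sub-case: $x = a_j$ with $y \in P_j - \{a\}$ and $a <_P y$ (from \eqref{E:dSa}), while $z$ is old with $y <_P z$ (from \eqref{E:S}). To conclude $a_j <_S z$ via \eqref{E:dSa} it remains to check $z \in P_j$ (we already have $a <_P z$). This is exactly where the hypotheses enter: since $P = P_0 \cup P_1$, if $z \notin P_j$ then $z \in P_{1-j}$, and as $P_{1-j}$ is a downset and $y <_P z$ we would get $y \in P_{1-j}$, hence $y \in P_0 \cap P_1$ with $a <_P y$, contradicting the maximality of $a$ in $P_0 \cap P_1$. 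So $z \in P_j - \{a\}$ and $x <_S z$, completing transitivity. Finally, isotonicity of $\gh_a$ is clause-by-clause: \eqref{E:Sa}, \eqref{E:dSa}, \eqref{E:S} each send a $<_S$-relation to the corresponding $<_P$-relation between images (with $\gh_a(a_0) = \gh_a(a_1) = a$), and equal elements have equal images, so $\gh_a$ preserves $\le$.
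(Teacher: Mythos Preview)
Your proof is correct and follows essentially the same approach as the paper's. Both arguments dispose of antireflexivity and antisymmetry quickly, and in transitivity both isolate the single nontrivial case $x = a_j$, $y, z \in P - \{a\}$, $a <_P y <_P z$, resolving it by the same downset-plus-maximality argument (if $z \in P_{1-j}$ then $y \in P_0 \cap P_1$ with $a <_P y$, contradicting maximality of $a$); the only difference is that the paper organizes the transitivity cases by which of $x,y,z$ lies in $\{a_0,a_1\}$, whereas you branch first on $y$ and then on which clauses generate the two relations.
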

\begin{proof}
Clearly, $<_S$ is antirelexive.

We first show that $<_S$ is antisymmetric. Let $x \neq y \in S$; we show that $x <_S y$ and $y  <_S x$ cannot both hold. This is clear if both $x, y \nin \set{a_0, a_1}$. So without loss of generality, we may assume that $x = a_j$ for $j = 0$ or $1$, and that $y \in P-\set{a}$. Then by \eqref{E:dSa}, $a <_P y$, and, by \eqref{E:Sa}, $y <_P a$, an impossibility. Thus $<_S$ is antisymmetric.

We now establish transitivity; let $x, y, z \in S$ with $x <_S y <_S z$. If none of $x, y$, or $z$ is an element of $\set{a_0, a_1}$, then $x <_P y <_P z$ by \eqref{E:S}, and so $x <_P z$, that is, $x <_S z$.
If $y \in \set{a_0, a_1}$, then by \eqref{E:Sa} and \eqref{E:dSa}, $x, z \in P-\set{a}$ and $x <_P a  <_P z$, that is, $x <_P z$ and so $x <_S z$. If $z \in \set{a_0,a_1}$, we get $x <_P y <_P a$, and so $x <_P a$, whereby $x <_S z$.

We are then left only with the case $y, z \in P-\set{a}$ and $x = a_j$ for $j \in \set{0,1}$. Then $y \in P_j$ and $a <_P y$
 by \eqref{E:dSa}. Furthermore, $y <_P z$
  by \eqref{E:S}. Thus $a < _P z$. We now observe that we cannot have $z \in P_{1-j}$, for, in that event, we would have $y \in P_0 \ii P_1$, contradicting the maximality of $a$. Thus $z \in P_j$ and $x = a_j <_S z$ by \eqref{E:dSa}, establishing transitivity.

It is clear that $\gh_a$ is isotone 
from the definition of $<_S$.
\end{proof}

Now if $P = P_0 \uu P_1$ is as above, and $a$ is maximal in $P_0 \ii P_1$, then the subset $A = \set{a_0,a_1}$ of $\Split(P,a)$ is convex. We can then fuse $A$, getting the ordered set $\Fuse(\Split(P,a),A)$. We have the mapping 
\[
   \gh' \colon \Fuse(\Split(P,a),A) \to P
\]
with $\gh' \colon \Fused \mapsto a$ and $\gh' \colon x \mapsto x$ if $x \neq \Fused$. We have the isotone 
\[
   \gy_A \colon \Split(P,a) \to \Fuse(\Split(P,a),A),
\]
satisfying
\[
   \gh'\gy_A = \gh_a \colon \Split(p,a) \to P.
\]
We apply Lemma~\ref{L:isom} to show that $\gh'$ is an order isomorphism. We only have to show that $\gh_a \colon \Split(P,a) \to P$ satisfies the condition assumed there for $\gf$.

So assume that $x, y \in S = \Split(P,a)$ with $x \nleq_S y$ and $\gh_a(x) \leq_P \gh_P(y)$.  Then $x \in A = \set{a_0,a_1}$; 
indeed, otherwise, by \eqref{E:Sa} and \eqref{E:S}, $x = \gh_a(x) \leq_P \gh_a(y)$ implies that $x \leq_S y$. If $y \in A$ also, then $x = a_j$ and $y=a_{1-j}$, for some $j \in \set{0,1}$, establishing the hypothesis.  If $y \nin A$, then $a_j \leq_S y$, for some $j \in \set{0,1}$, by \eqref{E:dSa}, and 
$x=a_{1-j}$ since $x \nleq_S y$, again establishing the condition for $\gh_a$. We thus have:

\begin{lemma}\label{L:isoFS}
Setting $A = \set{a_0, a_1}$, the mapping 
\[
   \gh' \colon \Fuse(\Split(P,a),A) \to P,
\]
whereby $\gh' \colon \Fused \mapsto a$
and $\gh' \colon x \mapsto x$, otherwise, is an order isomorphism.
\end{lemma}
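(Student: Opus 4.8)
The plan is to verify the two hypotheses of Lemma~\ref{L:isom} for the map $\gh_a \colon \Split(P,a) \to P$ and then invoke that lemma together with the universal mapping property to obtain $\gh'$. Surjectivity of $\gh_a$ is immediate, and $\gh_a(a_0) = a = \gh_a(a_1)$ holds by definition, so the only substantive point is the order-reflection condition: if $x, y \in S$ with $x \nleq_S y$ but $\gh_a(x) \leq_P \gh_a(y)$, then there must exist $b_1, b_2 \in A = \set{a_0, a_1}$ with $x \leq_S b_1$ and $b_2 \leq_S y$.

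First I would dispose of the case $x \notin A$. Writing $\bar x = \gh_a(x) = x$ and $\bar y = \gh_a(y)$, the comparison $x \leq_P \bar y$ would, via \eqref{E:S} when $y \notin A$ and via \eqref{E:Sa} when $y = a_j$ (noting $x <_P a$ gives $x <_S a_j$), already force $x \leq_S y$, contradicting $x \nleq_S y$. Hence $x \in A$, say $x = a_j$. Then $x \leq_S a_j$ handles the "$x \leq_S b_1$" half with $b_1 = a_j$. For the other half: if $y \in A$, say $y = a_k$, then since $x \nleq_S y$ and $a_j \nleq_S a_k$ for $j \neq k$ (and $a_j \leq_S a_j$ would contradict $x \neq y$), we get $y = a_{1-j}$, and $b_2 = a_{1-j} = y \leq_S y$ works. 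If $y \notin A$, then $\gh_a(y) = y$ and $a = \gh_a(x) \leq_P y$; I need to produce some $a_k \leq_S y$. By \eqref{E:dSa} this requires $y \in P_k - \set{a}$ with $a <_P y$ (the case $a = y$ being excluded since $y \notin A$ means $y \neq a$). Since $P = P_0 \cup P_1$, $y$ lies in $P_0$ or $P_1$; pick $k$ with $y \in P_k$, and then $a_k \leq_S y$ by \eqref{E:dSa}, so $b_2 = a_k$ works. In every case both halves of the condition are met.

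Having checked the hypothesis, Lemma~\ref{L:isom} (applied with $\gf = \gh_a$, using that $A = \set{a_0,a_1}$ is convex in $\Split(P,a)$, as already noted) gives that the induced map $\gf' \colon \Fuse(\Split(P,a),A) \to P$ with $\gf' \gy_A = \gh_a$ is an order isomorphism; and since $\gf'$ sends $\Fused$ to $\gh_a(a_j) = a$ and fixes every other element, $\gf'$ is exactly the map $\gh'$ of the statement. This completes the proof.

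I do not expect a genuine obstacle here — the argument is a short case analysis feeding into Lemma~\ref{L:isom}. The one place to be careful is the subcase $x = a_j$, $y \notin A$: one must use $P = P_0 \cup P_1$ to locate $y$ in some $P_k$ (this is where the structural assumption on $P$ enters, rather than just the maximality of $a$), and one must separately note that $y \neq a$ so that \eqref{E:dSa} applies with the strict inequality $a <_P y$ rather than equality.
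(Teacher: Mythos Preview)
Your proposal is correct and follows essentially the same approach as the paper: both verify the order-reflection hypothesis of Lemma~\ref{L:isom} for $\gh_a$ via the same case analysis (first forcing $x \in A$, then splitting on whether $y \in A$), and then invoke that lemma. Your write-up is in fact slightly more explicit than the paper's in spelling out why some $a_k \leq_S y$ holds when $y \notin A$ (namely, via $P = P_0 \cup P_1$).
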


\section{Admissible congruences and extensions}
\label{S:Admissible}

Let $K$ be a finite lattice, 
and let  $a \prec c \prec b$ be three elements  of $K$. 
As in Lemma~\ref{L:threecover}, 
we extend $K$ to the lattice $K^+$ 
by adjoining a new element $u$ 
as a relative complement of $c$ in the interval $[a,b]$.

We start with an easy and well-known statement.

\begin{lemma}\label{L:det}
For the lattice $K$ above,
$K$ is a congruence-determining sublattice of $K^+$.
\end{lemma}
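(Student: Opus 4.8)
The plan is to prove that the restriction map $\Con{K^+} \to \Con K$, sending a congruence $\theta$ of $K^+$ to its restriction to $K$, is one-to-one. Since $K^+$ arises from $K$ by adjoining the single new element $u$, a congruence $\theta$ of $K^+$ is completely recovered from its restriction to $K$ --- write $\alpha$ for that restriction --- once one knows the $\theta$-class $u/\theta$. So the whole task is to show that $u/\theta$ depends only on $\alpha$.

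First I would record two equivalences, valid for every $\theta \in \Con{K^+}$:
\[
   a \equiv u \pmod{\theta} \iff c \equiv b \pmod{\alpha}, \qquad
   u \equiv b \pmod{\theta} \iff a \equiv c \pmod{\alpha}.
\]
Both are immediate from the defining relations of $K^+$ in Lemma~\ref{L:threecover}: since $u \mm c = a$, $u \jj c = b$ and $a \le u \le b$, forming $(\,\cdot\,)\jj c$ and $(\,\cdot\,)\mm u$ carries $a \equiv u$ to and from $c \equiv b$, and forming $(\,\cdot\,)\mm c$ and $(\,\cdot\,)\jj u$ does the same for $u \equiv b$ and $a \equiv c$. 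Next I would invoke that, by Lemma~\ref{L:threecover}, in $K^+$ the element $u$ has $a$ as its only lower cover and $b$ as its only upper cover. Hence the $\theta$-class of $u$, being an interval of $K^+$, is either the singleton $\set{u}$ or else reaches down to $a$ or up to $b$; and as soon as it contains one element of $K$ it contains that element's whole $\alpha$-class, so it equals that $\alpha$-class with $u$ adjoined. Combining this with the two equivalences determines $u/\theta$ from $\alpha$ in each case: it is $\set{u}$ when $c \not\equiv b$ and $a \not\equiv c \pmod{\alpha}$; it is $(a/\alpha)\uu\set{u}$ when $c \equiv b$ but $a \not\equiv c \pmod{\alpha}$; it is $(b/\alpha)\uu\set{u}$ when $a \equiv c$ but $c \not\equiv b \pmod{\alpha}$; and it is $(a/\alpha)\uu\set{u}$, with $a/\alpha = b/\alpha$, when both $a \equiv c$ and $c \equiv b \pmod{\alpha}$. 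Thus $\theta$ is determined by $\alpha$.

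The one step that needs care is the structural claim that any collapse touching $u$ already forces $a \equiv u$ or $u \equiv b$ modulo $\theta$ --- equivalently, that no element of $K$ can be $\theta$-congruent to $u$ unless $a$ or $b$ is. This follows at once from $u$ having $a$ as its only lower cover and $b$ as its only upper cover in $K^+$, since then the interval $u/\theta$, if it is not $\set{u}$, must contain $a$ or $b$. Everything else reduces to the routine verification of the two displayed equivalences from the join and meet formulas of Lemma~\ref{L:threecover}, so I expect no genuine obstacle beyond that bookkeeping.
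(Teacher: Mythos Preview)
Your proposal is correct and follows essentially the same approach as the paper's proof: both argue that a congruence $\theta$ on $K^+$ is determined by $\theta\restr K$ once the $\theta$-class of $u$ is known, and both pin down that class via the equivalences $a\equiv u\iff c\equiv b$ and $u\equiv b\iff a\equiv c$ together with the fact that $a$ and $b$ are the only covers of $u$. The paper's version is terser---it simply records, for $x>u$ and dually for $x<u$, the criterion for $u\equiv x$ in terms of $\theta\restr K$---while you organize the same information into an explicit four-way case split on the shape of $u/\theta$; but the underlying argument is the same.
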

\begin{proof}
Let $\bga$ be a congruence relation on $K^+$. We show that $\bga$ is determined by $\bga\restr K $. We need only consider the congruence class of $u$.

Let $x > u$. Then $x \in K$ and $x \geq b$. So
$\cng u = x(\bga)$  if{}f  $\cng b = x (\bga\restr K )$  and $\cng u=b (\bga)$ if{}f $\cng b = x  (\bga\restr K )$ and $\cng a = c (\bga\restr K )$, the latter because $u$ is a relative complement of $c$ in $[a,b]$.

Dually, if $x < u$, then $\cng u = x(\bga)$  if{}f  $\cng a = x (\bga\restr K )$  and $\cng u=a (\bga)$ if{}f $\cng a = x  (\bga\restr K )$ and $\cng b = c (\bga\restr K )$.

Thus $\bga$ is indeed determined by $\bga\restr K $.
\end{proof}

We now determine which congruence relations on $K$ extend to $K^+$.

\begin{definition}\label{D:admissible}
For the lattice $K$ above, 
a congruence relation $\bga$ on $K$ is \emph{admissible}, 
if it satisfies the following four conditions.
\begin{enumeratei}
\item \label{I:con} If $x \succ a$ and $\cng x=a (\bga)$, then $\cng c=a (\bga)$.
\item \label{I:ns} If $a$ is meet-reducible in $K$ and $\cng c=b (\bga)$, then $\cng a=b (\bga)$
\item\label{I:cond} If $x \prec b$  and $\cng x=b (\bga)$, then $\cng c=b (\bga)$.
\item\label{I:nsd} If $b$ join-reducible in $K$ and $\cng a=c (\bga)$, then $\cng a=b (\bga)$.
\end{enumeratei}
\end{definition}

Note that admissibility is a self-dual concept: 
(iii) is the dual of \eqref{I:con}, 
and \eqref{I:nsd} is the dual of  \eqref{I:ns}.

\begin{theorem}\label{T:admis}
For the lattice $K$ above,
a congruence relation $\bga$ on $K$ can be extended to the lattice $L=K^+$ if{}f it is admissible.
\end{theorem}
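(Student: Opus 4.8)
The plan is to prove both directions by combining the characterization of extendability of a congruence with the Technical Lemma for Finite Lattices (Lemma~\ref{L:technical}). For the ``only if'' direction, I would suppose that $\bga$ extends to a congruence $\bga^+$ on $L = K^+$. Restricting $\bga^+$ to $K$ recovers $\bga$, so the task is to read off conditions \eqref{I:con}--\eqref{I:nsd} from the fact that $\bga^+$ is a congruence on $L$ that contains the new element $u$, which sits as a relative complement of $c$ in $[a,b]$. The key observations are: $u \mm b = a$, $u \jj b = c$, and (by Lemma~\ref{L:threecover}) $u$ covers $a$ and is covered by $b$. Thus $a \prec u$ and $u \prec b$ in $L$. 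Now apply the compatibility of $\bga^+$ with meet and join at these covers. For instance, if $x \succ a$ in $K$ with $\cng x = a\,(\bga)$, then $\cng x = a\,(\bga^+)$; meeting with $u$ and using that $u$ covers $a$ forces $\cng{x \mm u} = a\,(\bga^+)$, but $x \mm u = a$ unless $x \geq b$ (by Lemma~\ref{L:threecover}), and in that latter case $x \mm u = c$ while... I would instead argue directly: since $a \prec u \prec b$ and $a \prec c \prec b$ in $L$, and $\bga^+$ collapses $a$ with something above $a$ in $K$, the Technical-Lemma cover conditions applied at $a$ (with the two covers $c$ and, say, $u$ or another cover of $a$) yield $\cng c = a\,(\bga^+)$, hence $\cng c = a\,(\bga)$; this is \eqref{I:con}. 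Condition \eqref{I:ns} comes from the dual cover condition at $a$: if $a$ is meet-reducible in $K$, it has a cover $c' \neq c$ in $K$, and collapsing $c$ with $b$ (hence $c$ with $u$, since $\cng b = c \jj b$... ) forces $a$ up; the meet-reducibility hypothesis is exactly what guarantees the extra cover needed to invoke the dual of \eqref{E:cover}. Conditions \eqref{I:cond} and \eqref{I:nsd} follow by the order-dual argument, using that $u \prec b$ and that $b$ being join-reducible supplies the requisite second lower cover of $b$.

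For the ``if'' direction---the substantive one---I would start with an admissible congruence $\bga$ on $K$ and explicitly define an equivalence relation $\bga^+$ on $L = K^+$ whose classes are intervals, and then verify the hypotheses of Lemma~\ref{L:technical}. The natural definition: keep the $\bga$-classes on $K$, and put $u$ into the class of $a$ if $\cng a = c\,(\bga)$ holds... no: more carefully, $u$ should go with $b$ precisely when $\cng c = b\,(\bga)$ (so that $u \jj b = c$ is collapsed appropriately), go with $a$ precisely when $\cng a = c\,(\bga)$ (so that $u \mm b = a$ sits correctly relative to $c$), and otherwise $u$ forms a singleton class or joins whatever class of $K$ is forced. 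I would make this precise by saying: let $\bga^+$ be the smallest equivalence relation on $L$ extending $\bga$ and collapsing $u$ with $a$ whenever $\cng a = c\,(\bga)$, with $b$ whenever $\cng c = b\,(\bga)$; one checks the classes are intervals (the only new class containing $u$ is $[u]$, $[a,u]$, $[u,b]$, or $[a,b]$ according to the cases, and these are intervals in $L$). Then I would verify condition \eqref{E:cover} and its dual for $\bga^+$. The cover relations in $L$ are those of $K$ together with $a \prec u \prec b$, and for any cover $x \prec y$ with $x, y \in K$ the condition is inherited from $\bga$ being a congruence on $K$ (using Lemma~\ref{L:det}, or directly). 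So the work is localized to covers involving $u$: the cases $a \prec u$, $u \prec b$, and the covers $a \prec c$, $c \prec b$ reconsidered in the presence of $u$. This is where admissibility conditions \eqref{I:con}--\eqref{I:nsd} get used one by one---for example, \eqref{E:cover} applied to $a$ covered by $u$ and by $c$ requires: if $\cng a = u\,(\bga^+)$ then $\cng c = c \jj u = b$... and this is governed by condition \eqref{I:nsd} or \eqref{I:ns}, while \eqref{E:cover} applied to some $a \prec x$ in $K$ with $\cng a = x$ forcing $\cng u = x \jj u$ needs \eqref{I:con} to first get $\cng c = a$ so that $u$ stays with $a$.

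The main obstacle I anticipate is the bookkeeping in the ``if'' direction: there is a modest case explosion because the class of $u$ in $\bga^+$ can be any of four intervals depending on which of the conditions ``$\cng a = c\,(\bga)$'' and ``$\cng c = b\,(\bga)$'' hold, and in each case one must re-examine \emph{all} cover pairs meeting $\{a, c, u, b\}$ against \eqref{E:cover} and its dual. The admissibility conditions are tailored to exactly close these cases, but matching each verification to the right one of \eqref{I:con}--\eqref{I:nsd} (and noticing where the meet-reducibility of $a$ or join-reducibility of $b$ is genuinely needed, versus where it is automatic) is the delicate part. A secondary point is making sure the definition of $\bga^+$ really does produce interval classes in all four cases---this uses that $a \prec u \prec b$ and $a \prec c \prec b$ in $L$, so any class containing $u$ together with a point of $K$ is forced to be one of the intervals listed---but once that is settled, Lemma~\ref{L:technical} reduces everything to the cover computations, and Lemma~\ref{L:det} guarantees that the resulting $\bga^+$ genuinely restricts to $\bga$, completing the equivalence.
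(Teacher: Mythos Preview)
Your overall architecture matches the paper's: show necessity by computing with the relative complement $u$, and show sufficiency by explicitly placing $u$ in an equivalence class and invoking Lemma~\ref{L:technical}. However, there is a genuine error in your ``if'' direction, and the ``only if'' direction is too vague at the key step.

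\textbf{The assignment of $u$ is backwards.} Since $u$ is a relative complement of $c$ in $[a,b]$, we have $u \mm c = a$ and $u \jj c = b$ (the statement of Lemma~\ref{L:threecover} contains a typo; do not rely on ``$u \jj b = c$''). Hence the prime interval $[a,u]$ is perspective to $[c,b]$, and $[u,b]$ is perspective to $[a,c]$. Consequently, if $\cng c = b\,(\bga)$ you must put $u$ in the class of $a$, and if $\cng a = c\,(\bga)$ you must put $u$ in the class of $b$ --- exactly the opposite of what you wrote. Your assignment fails the dual of \eqref{E:cover} immediately: suppose $\cng c = b\,(\bga)$ but $\ncng a = c\,(\bga)$. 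At $b$, with lower covers $c$ and $u$, the dual of \eqref{E:cover} demands $\cng u = {c \mm u} = a\,(\bgb)$. But you placed $u$ with $b$, and $a$ is not in that class, so $\bgb$ is not a congruence. The paper's definition (and the perspectivity reasoning above) gets this right; once corrected, your plan of checking that the classes are intervals and then verifying \eqref{E:cover} case by case is exactly what the paper does.

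\textbf{The ``only if'' direction needs a concrete computation, not an appeal to Lemma~\ref{L:technical}.} That lemma is a sufficient criterion for an equivalence relation to be a congruence; it is not a tool for deducing new collapses from a congruence you already have. For condition~\eqref{I:con} the paper argues directly: from $\cng x = a\,(\bga^+)$ join with $u$ to get $\cng {x \jj u} = u$, observe $x \jj u = x \jj b$ since $x \nleq a$, then meet with $c$ to obtain $\cng c = a$. For condition~\eqref{I:ns} the meet-reducibility of $a$ is used not to supply ``an extra cover for the dual of \eqref{E:cover}'' but to produce an element $x > a$ with $x \mm c = a$; one first gets $\cng u = a$ from $\cng c = b$ (meet with $u$), then joins with $x$ to get $\cng {x \jj b} = x$, and finally meets with $c$ to force $\cng c = a$. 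Your sketch gestures at the right ingredients but does not carry out these computations, and in places (e.g.\ ``meeting with $u$ \dots\ forces $\cng{x \mm u} = a$'') heads in a direction that yields nothing.
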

\begin{proof}
We first assume that $\bga$ has an extension $\bgb$ to $L$. We show that $\bga$ is admissible.

We first show that \eqref{I:con} holds for $\bga$.  Let $x \in K$ with $x \succ a$ and $\cng x = a (\bga)$.  Then $\cng x = a (\bgb)$ and so 
\[
   \cng {x \jj b = x \jj u} ={ a \jj u = u} (\bgb).
\]
Taking the meet with $c$, we conclude that  $\cng c = a(\bgb)$, that is, $\cng c = a (\bga)$, thereby establishing \eqref{I:con}. 

The dual argument establishes \eqref{I:cond}.

Next we show that \eqref{I:ns} holds for $\bga$. Assume that $a$ is meet-reducible in $K$, and so there is an $x > a$  in $K$  with $x \mm c =a$. Assume, furthermore, that
\[
   \cng c=b (\bga).
\]
Then $\cng c =b(\bgb)$ (in $L$), and so $\cng u = a(\bgb)$. So
\[
\cng {b \jj x = u \jj x} = {a \jj x =x}(\bgb),
\]
that is, $\cng b \jj x = x (\bga)$, and thus
\[
\cng {c = (b \jj x) \mm c} = {x \mm c = a} (\bga).
\]
Thus $\cng a = b (\bga)$, thereby establishing \eqref{I:ns}. 

The dual argument establishes \eqref{I:nsd}.

Therefore, any congruence relation on $K$ that extends to $L$ is admissible.

Now let $\bga$ be an admissible congruence on $K$. We construct a congruence~$\bgb$ on $L$ that is an extension of $\bga$.

We first define $\bgb$ as an equivalence relation on $L$, 
and then show that it is indeed a congruence relation. 
We need only describe the equivalence class of $u$. 
If $\cng a = c (\bga)$, 
we put $u$ in the equivalence class of $b$, 
and if $\cng c = b (\bga)$ we put $u$ in the equivalence class of $a$. If both equivalences hold, there is no contradiction, 
since then $\cng a = b (\bga)$. 
If neither equivalence holds, 
we let $u$ be in its own singleton equivalence class $\set{u}$. 
Then clearly,
\[
\bgb\restr K  = \bga.
\]

Note that $\bgb$ is a self-dual equivalence relation.

In order to show that $\bgb$ is a congruence relation, we apply Lemma~\ref{L:technical}, the Technical Lemma for Finite Lattices.

We first show that all of the equivalence classes of $\bgb$ are intervals in $L$. We first consider the 
equivalence classes that do not contain $u$. 
Let $X$ be such an equivalence class. 
Then $X \ci K$ and is an interval $[x, y]_K$ for some $x \leq y$ in $K$. We claim that $X = [x,y]_L$. 
For otherwise, $u \in [x, y]_L$ and so $x \leq a < u < b \leq y$. 
It would then follow that $\cng a = b (\bga)$ 
and so by the definition of $\bgb$, 
that $\cng u=a (\bgb)$ and $\cng u = b  (\bgb)$, 
contradicting our assumption that $u \nin X$, since $a, b \in X$. 
Thus any equivalence class of $\bgb$ 
that does not contain $u$ is an interval in $L$.

Now let $X$ be the equivalence class of $\bgb$ that contains $u$. 
If $\ncng a = c (\bga)$ and $\ncng c = b (\bga)$, 
then by the definition of $\bgb$, $X = \set{u}$, 
an interval in $L$.

Otherwise, by duality, we may assume that
\[
\cng c = b (\bga).
\]
Then by definition of $\bgb$,  $\cng u = a (\bgb)$, that is, $a \in X$. Since $\bgb\restr K  = \bga$, 
it follows that $X \ii K$ is the $\bga$-congruence class of $a$,  that is, $X \ii K = [x,y]_K$ for some $x, y \in K$ with $x \leq a \leq y$. So
\[
X = [x, y]_K \uu \set{u}.
\]
If $a <y$, then there is a $z \in K$ with $a \prec z \leq y$,  
and so  $\cng z = a (\bga)$. By condition~\eqref{I:con},
$\cng a = c(\bga)$. Thus $\cng a = b (\bga)$, that is, $y \geq b$.  
Then $u \in [x ,y]_L$, and so $X = [x,y]_L$.

If, on the other hand, $a=y$, then $X = [x,u]_L$, 
since $u\succ a$ in $L$.

Consequently, all the equivalence classes of $\bgb$ are intervals in $L$.

We now verify \eqref{E:cover} of Lemma~\ref{L:technical} for $\bgb$.
Since \eqref{E:cover} hold trivially if $y = z$,
let $x, y, z$ be distinct elements of $L$ with $x \prec y,z$ and with $\cng x=y(\bgb)$. 
We show that $\cng z = y \jj z (\bgb)$.

Since $u$ is meet-irreducible, $x \neq u$.
 
If both $y, z \neq u$, then we are in $K$ with $\cng x = y (\bga)$. Then $\cng z = y \jj z (\bga)$, and so $\cng z = y \jj z (\bgb)$. Otherwise, either $z=u$ or $y=u$, and so $x = a$, the unique lower cover of $u$.

If $z=u$, then $\cng y = a (\bga)$. By \eqref{I:con}, $\cng a = c (\bga)$. Thus
\[
   \cng u =b (\bgb),
\]
by definition. Furthermore, $\cng {y \jj z = y \jj u = y \jj b} =  b (\bga)$, that is,
\[
   \cng y \jj z =  b (\bgb).
\]
So $\cng y \jj z = {u = z} (\bgb)$ since $\bgb$ is transitive.

If $y = u$, that is, if $\cng a = u(\bgb)$, 
then $\cng c = b(\bga)$ by definition of $\bgb$.

If $z=c$, then
\[
   \cng {z = c} = {b = u \jj c = y \jj z} (\bga),
\]
that is, $\cng z = {y \jj z}(\bgb)$.

If $z \neq c$, then $a$ is meet-reducible; since $\cng c = b (\bga)$, 
we conclude by \eqref{I:ns} that $\cng a = b (\bga)$. 
Then
\[
\cng {z = a \jj z} = {b \jj z = u \jj z =y \jj z} (\bga),
\]
that is, again that $\cng z = {y \jj z} (\bgb)$.

Thus for all distinct $x, y, z \in L$, with $x \prec y,z$ and with $\cng x=y(\bgb)$,
it follows that   $\cng z = {y \jj z}(\bgb)$, verifying \eqref{E:cover}. 

The dual argument verifies the dual of \eqref{E:cover}.

Consequently, by Lemma~\ref{L:technical}, $\bgb$ is a congruence relation on $L$, and by its definition, is the extension of $\bga$ to $L$.
\end{proof}

\begin{corollary}\label{C:ext}
For the lattice $K$ above,
let $\bga$ be a congruence relation on $K$
and let $L = K^+$.
\begin{enumeratea}
\item If $\bga$ is admissible, 
then $\conL{\bga}\restr K  = \bga$.
\item If $b$ is join-irreducible in $K$ 
and only \ref{D:admissible}\eqref{I:con} fails for $\bga$, 
then \[\conL{\bga}\restr K  = \bga \jj \conK{a,c}.\]
\item If $a$ is meet-irreducible in $K$ 
and only \ref{D:admissible}\eqref{I:cond} fails for $\bga$, 
then \[\conL{\bga}\restr K  = \bga \jj \conK{b,c}.\]
\item Otherwise, $\conL{\bga}\restr K  = \bga \jj \conK{a,b}$.
\end{enumeratea}
\end{corollary}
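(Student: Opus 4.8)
\emph{Plan of proof.} The organizing observation is that $\conL{\bga}\restr K$ is the least admissible congruence of $K$ above $\bga$. Indeed, $\bga\le\conL{\bga}$, and $\conL{\bga}\restr K$ is admissible by Theorem~\ref{T:admis}, since it extends to $L$ --- namely to $\conL{\bga}$ itself. Conversely, if $\bgb$ is an admissible congruence of $K$ with $\bga\le\bgb$, then by Theorem~\ref{T:admis} there is a congruence $\bgd$ of $L$ with $\bgd\restr K=\bgb$; since $\bga\le\bgd$, we get $\conL{\bga}\le\bgd$, hence $\conL{\bga}\restr K\le\bgb$. Thus for each part it suffices to exhibit the claimed congruence $\bga^{*}$ --- respectively $\bga$, $\bga\jj\conK{a,c}$, $\bga\jj\conK{b,c}$, $\bga\jj\conK{a,b}$ --- and check two things: that $\bga^{*}$ is admissible, and that $\bga^{*}\le\conL{\bga}\restr K$; the second reduces, since $\bga\le\conL{\bga}\restr K$ always, to showing that $\conL{\bga}$ also collapses the relevant pair(s) among $a,c,b$.

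Checking admissibility of $\bga^{*}$ is the routine half. For part (a) it is the hypothesis. For part (b), conditions \eqref{I:con} and \eqref{I:ns} hold for $\bga\jj\conK{a,c}$ because $\conK{a,c}\le\bga\jj\conK{a,c}$ (the conclusion of \eqref{I:con} being $\cng c=a$, and \eqref{I:ns} following by transitivity through $\cng a=c$), while \eqref{I:cond} and \eqref{I:nsd} hold because $b$ is join-irreducible --- its only lower cover in $K$ being $c$, so \eqref{I:cond} is trivial and \eqref{I:nsd} vacuous. Part (c) is the order dual. For part (d) all four conditions hold for $\bga\jj\conK{a,b}$ automatically, since $\conK{a,b}\le\bga\jj\conK{a,b}$ collapses all of $a,c,b$; so $\bga\jj\conK{a,b}$ is admissible with no further hypothesis.

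The substance is forcing the extra collapses inside $\conL{\bga}$. The basic manoeuvre, used throughout, starts from a pair collapsed by $\bga$, forms the join (or meet) with $u$, and then takes the meet (or join) back with $c$, using that $u$ is a relative complement of $c$ in $[a,b]$ (so $u\mm c=a$, $u\jj c=b$) together with the formula of Lemma~\ref{L:threecover} ($u\jj x=b\jj x$ for $x\nleq a$, and dually). For part (b), the failure of \eqref{I:con} gives an $x\succ a$ in $K$ with $\cng x=a(\bga)$ and $x\ne c$; then in $L$ we get $\cng{b\jj x}=u(\conL{\bga})$, and meeting with $c$ yields $\cng c=a(\conL{\bga})$, i.e.\ $\conK{a,c}\le\conL{\bga}\restr K$, as needed. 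For part (d), I would first note that ``not (a), (b), (c)'' forces one of four situations: \eqref{I:ns} fails, \eqref{I:nsd} fails, \eqref{I:con} fails with $b$ join-reducible, or \eqref{I:cond} fails with $a$ meet-reducible. In each, one application of the manoeuvre collapses one of $\set{a,c}$, $\set{c,b}$, and a second application collapses the other --- in the join-reducible case using a second lower cover $d\ne c$ of $b$, for which $d\incomp u$ and $d\jj u=d\jj c=b$, so that $\cng u=b(\conL{\bga})$ propagates to $\cng c=b(\conL{\bga})$. Hence $\conK{a,b}=\conK{a,c}\jj\conK{b,c}\le\conL{\bga}\restr K$.

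I expect the main obstacle to be part (d): once one pair in $\set{a,c,b}$ has been collapsed in $\conL{\bga}$, obtaining the \emph{second} collapse genuinely requires invoking the distinguishing feature of the relevant sub-case (an extra cover of $a$ or of $b$, or meet/join-reducibility), and some care is needed to verify that these sub-cases really do exhaust ``otherwise''. The remaining parts, and all the admissibility verifications, are short computations in the style of the proof of Theorem~\ref{T:admis}.
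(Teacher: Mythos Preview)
Your proposal is correct and rests on the same organizing principle as the paper: $\conL{\bga}\restr K$ is the least admissible congruence of $K$ above $\bga$. You state this explicitly; the paper uses it implicitly. From there both arguments check that the candidate $\bga^{*}$ is admissible (upper bound) and that any admissible congruence above $\bga$ must contain the relevant extra collapse(s) (lower bound).

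The one place you do more work than necessary is the lower bound. You force the collapses $\cng a=c$, $\cng c=b$ by direct manipulation in $L$ with the new element $u$ --- joining and meeting with $u$ and $c$. These computations are fine (indeed, they reproduce the necessity half of Theorem~\ref{T:admis}), but you have already proved that $\conL{\bga}\restr K$ is admissible, so you can get the collapses for free: e.g.\ in~(b), since condition~\eqref{I:con} fails for $\bga$ via some $x\succ a$, the same $x$ witnesses the hypothesis of~\eqref{I:con} for the larger admissible congruence $\conL{\bga}\restr K$, whence $\cng a=c(\conL{\bga}\restr K)$ immediately. Likewise in~(d): once one of $\set{a,c}$, $\set{c,b}$ collapses in $\conL{\bga}\restr K$, the join/meet-reducibility hypothesis together with admissibility condition~\eqref{I:nsd} or~\eqref{I:ns} forces the other --- no second pass through $u$ is needed. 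This is how the paper argues, tersely. Your case split for~(d) is correct and exhaustive; note in passing that if~\eqref{I:con} fails then $a$ is automatically meet-reducible (the witness $x\ne c$ is a second upper cover), and dually, which is why the mixed case ``both \eqref{I:con} and \eqref{I:cond} fail'' lands in your sub-cases.
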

\begin{proof}
If $\bga$ is admissible, then $\conL{\bga}$ is its extension to $L$, 
and so we have $\conL{\bga}\restr K  = \bga$.

Now for any congruence $\bga$ on $K$, the congruence $\bga \jj \conK{a, b}$ is admissible.
Thus for any $\bga$,
\[
\conL{\bga}\restr K  \leq \conL{\bga \jj \conK{a,b}}_{\restr_K} = \bga \jj \conK{a,b}.
\]

Now assume that $b$ is join-irreducible, and that only \ref{D:admissible}\eqref{I:con} fails for $\bga$, that is, there is a cover $x$ of $a$ distinct from $c$ with $\cng a=x(\bga)$ and $\ncng a=c(\bga)$. It is easy to see that, in this case,  $\bga \jj \conK{a,c}$ is admissible. Thus $\conL{\bga}\restr K  = \bga \jj \conK{a,c}$.

If on the other hand, $b$ is join-reducible, then $\bga \jj \conK{a,c}$ is not admissible unless \[\cng c = b (\bga \jj \conK{a,c}),\] in which case \[\bga \jj \conK{a,c} = \bga \jj \conK{a,b}.\]

If \ref{D:admissible}\eqref{I:cond} fails for $\bga$, we are in the dual situation.

Finally, if \ref{D:admissible}\eqref{I:ns} or \eqref{I:nsd} fail, any admissible extension $\bgg$ of $\bga$  satisfies $\cng a = b(\bgg)$. Thus if \ref{D:admissible}\eqref{I:ns} or \eqref{I:nsd} fails, then $\conL{\bga}\restr K  = \bga \jj \conK{a,b}$.
\end{proof}

Although we make no use of it in this paper, observe that the Tab Lemma 
(G. Cz\'edli, G. Gr\"atzer, and H.~Lakser \cite[Lemma 12]{CGL17})
easily follows.

\begin{corollary}[\textbf{Tab Lemma}]
Let $L$ be any finite lattice and let $u$ be a tab of~$L$ in the covering multidiamond $[a, b]$. Set $K= L - \set{u}$, a sublattice of $L$. Let $\bga$ be a congruence relation on $K$, and set $\bgb = \conL{\bga}$.

If $\ncng a = b (\bgb)$, then $\bgb\restr K  = \bga$.

If $\cng a = b (\bgb)$, then $\bgb\restr K  = \bga \jj \conK{a,b}$.
\end{corollary}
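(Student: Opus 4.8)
The plan is to read the Tab Lemma off Corollary~\ref{C:ext}. First I would unpack the hypothesis: saying that $u$ is a tab of $L$ in the covering multidiamond $[a,b]$ means that $[a,b] \iso \SM n$ for some $n \geq 3$, that $u$ is one of its atoms (so $a$ is the unique lower cover of $u$ and $b$ its unique upper cover), and that $K = L - \set{u}$ is a sublattice of $L$. Choosing any atom $c$ of this multidiamond other than $u$, we have $a \prec c \prec b$ in $K$, and, since $u$ and $c$ are both atoms of the multidiamond, $u \mm c = a$ and $u \jj c = b$; thus $u$ is a relative complement of $c$ in $[a,b]$ and $L = K^+$ in the sense of Section~\ref{S:Admissible}. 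Hence Lemma~\ref{L:det}, Theorem~\ref{T:admis}, and Corollary~\ref{C:ext} all apply to $K$, $c$, and $L$.

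The crucial observation is that, $[a,b]$ being a multidiamond with at least three atoms, the interval $[a,b]_K$ obtained by deleting the single atom $u$ still has at least two atoms and, dually, at least two coatoms. Therefore $a$ is meet-reducible in $K$ and $b$ is join-reducible in $K$, so clauses (b) and (c) of Corollary~\ref{C:ext}---which presuppose $b$ join-irreducible, resp.\ $a$ meet-irreducible in $K$---are vacuous in this situation. Corollary~\ref{C:ext} then reduces to a clean dichotomy: for every congruence $\bga$ of $K$, either $\bga$ is admissible and $\conL{\bga}\restr K = \bga$, or $\bga$ is not admissible and $\conL{\bga}\restr K = \bga \jj \conK{a,b}$. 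In particular, $\bgb\restr K = \conL{\bga}\restr K$ is always one of the two congruences $\bga$ and $\bga \jj \conK{a,b}$.

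It then remains to match this dichotomy with the one in the statement, which is phrased in terms of whether $\cng a = b (\bgb)$. If $\ncng a = b (\bgb)$, then $a$ and $b$ are not identified by $\bgb\restr K$, so $\bgb\restr K$ cannot be $\bga \jj \conK{a,b}$, whence $\bgb\restr K = \bga$. If $\cng a = b (\bgb)$, then, as $a, b \in K$, also $\cng a = b (\bgb\restr K )$; should $\bgb\restr K$ be $\bga$, this forces $\conK{a,b} \leq \bga$, so $\bga = \bga \jj \conK{a,b}$ and the two candidate values coincide, while in the remaining case $\bgb\restr K = \bga \jj \conK{a,b}$ directly. Either way $\bgb\restr K = \bga \jj \conK{a,b}$, which, together with the exhaustiveness of the two cases, proves the lemma.

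I do not anticipate a real obstacle. The one delicate point is the reduction in the second step: one must check that the multidiamond hypothesis genuinely leaves $a$ meet-reducible and $b$ join-reducible \emph{in $K$} (not merely in $L$), since that is exactly what excludes clauses (b) and (c) of Corollary~\ref{C:ext} and makes the two-case statement of the Tab Lemma come out.
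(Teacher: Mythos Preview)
Your proposal is correct and follows essentially the same route as the paper: pick another atom $c$ of the multidiamond so that $L = K^+$, observe that the remaining atoms make $a$ meet-reducible and $b$ join-reducible in $K$ (the paper names a specific witness $c'$), so only cases (a) and (d) of Corollary~\ref{C:ext} survive, and then match that admissible/non-admissible dichotomy with the $\cng a = b (\bgb)$ dichotomy exactly as you do.
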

\begin{proof}
Let $c$ and $c'$ be two other atoms of  the multidiamond $[a,b]$. The lattice $L$ then is obtained by adding a relative complement of $c$ in $[a,b]_K$. Thus Corollary~\ref{C:ext} applies.

The element $a$ is meet-reducible in $K$ and $b$ is join-reducible because of the element $c'$. By Corollary~\ref{C:ext},
if $\bga$ is not admissible, then $\bgb\restr K  = \bga \jj  \conK{a, b}$. Furthermore, $\cng a=b (\bgb)$. So if $\bga$ is not admissible, then $\cng a = b (\bgb)$ and $\bgb\restr K  = \bga \jj \conK{a,b}$.

On the other hand, if $\bga$ is admissible, then $\bgb\restr K  = \bga$ which is $\bga \jj \conK{a,b}$ exactly when $\cng a = b (\bga)$ exactly when $\cng a = b (\bgb)$. 

Summarizing, if $\ncng a = b (\bgb)$, then $\bga$ is admissible, and $\bgb\restr K  = \bga$, and if $\cng a = b (\bgb)$, then $\bgb\restr K  = \bga \jj \conK{a,b}$ whether or not $\bga$ is admissible.
\end{proof}

\section{Preliminaries for the bridge construction}\label{S:prelim}

We present three results that serve 
as the foundation of the bridge construction. 

\begin{lemma}\label{L:admis}
Let $K$ be a finite lattice containing the elements $a, b, c$ with $a \prec c \prec b$, such that $a$ is meet-irreducible and $b$ is join-irreducible. Let $K$ be extended to the lattice $K^+$ by adjoining a relative complement $u$ of $c$ in the interval $[a,b]$. Then $K^+$ is a congruence preserving extension of $K$
\end{lemma}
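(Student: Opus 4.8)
The plan is to combine Theorem~\ref{T:admis} with Corollary~\ref{C:ext}. By Lemma~\ref{L:det}, $K$ is a congruence-determining sublattice of $K^+$, so every congruence of $K^+$ is determined by its restriction to $K$, and the restriction map $\bgb \mapsto \bgb\restr K$ is injective on $\Con K^+$. To show that $K^+$ is a congruence preserving extension, it therefore suffices to show that every congruence $\bga$ of $K$ extends to $K^+$; the extension is then automatically unique, and the restriction map is the desired isomorphism $\Con K^+ \to \Con K$.

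By Theorem~\ref{T:admis}, a congruence $\bga$ of $K$ extends to $K^+$ precisely when it is admissible, so the task reduces to checking that every congruence of $K$ is admissible under the present hypotheses. Here is where the hypotheses do the work: conditions \ref{D:admissible}\eqref{I:ns} and \eqref{I:nsd} of Definition~\ref{D:admissible} are vacuous because $a$ is meet-irreducible and $b$ is join-irreducible, so neither ``$a$ is meet-reducible'' nor ``$b$ is join-reducible'' can hold. For condition \eqref{I:con}: if $x \succ a$ in $K$ and $\cng x = a(\bga)$, then since $a$ is meet-irreducible, $c$ is its unique cover, so $x = c$ and hence trivially $\cng c = a(\bga)$. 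Condition \eqref{I:cond} is dual, using that $b$ is join-irreducible so $c$ is the unique lower cover of $b$. Thus all four conditions hold for every $\bga$, i.e., every congruence of $K$ is admissible.

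Putting this together: by Theorem~\ref{T:admis} every $\bga \in \Con K$ extends to $K^+$, and by Lemma~\ref{L:det} this extension is unique and the restriction map is a bijection $\Con K^+ \to \Con K$; being order-preserving in both directions, it is an isomorphism. Hence $K^+$ is a congruence preserving extension of $K$. I do not anticipate a real obstacle here---the content is entirely in the two cited results, and the new input is just the elementary observation that meet-irreducibility of $a$ and join-irreducibility of $b$ force every congruence of $K$ to be admissible (equivalently, in the language of Corollary~\ref{C:ext}, none of the four admissibility conditions can fail, so case~(a) of that corollary always applies and $\conL{\bga}\restr K = \bga$ for all $\bga$).
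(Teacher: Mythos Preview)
Your proposal is correct and follows essentially the same approach as the paper: verify that every congruence of $K$ is admissible by noting that conditions \ref{D:admissible}\eqref{I:ns} and \eqref{I:nsd} are vacuous under the irreducibility hypotheses, while \eqref{I:con} and \eqref{I:cond} hold trivially because $c$ is the unique upper cover of $a$ and the unique lower cover of $b$; then invoke Theorem~\ref{T:admis} for extendability and Lemma~\ref{L:det} for uniqueness. Your write-up is simply more explicit than the paper's about why congruence-determining plus full extendability yields congruence preservation.
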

\begin{proof}
Let $\bga$ be any congruence relation on $K$. Then it is easy to see that $\bga$ is admissible. Indeed, \ref{D:admissible}\eqref{I:ns} and \eqref{I:nsd} do not apply, and \ref{D:admissible}\eqref{I:con} and \eqref{I:cond} apply trivially, since the only possible $z$ in each is $c$. Thus by Theorem~\ref{T:admis}, any congruence of $K$ extends to $K^+$. By Lemma~\ref{L:det} it follows that $K^+$ is indeed a congruence preserving extension of $K$. 
\end{proof}

Henceforth, in this section, $K$ will be a finite lattice with elements $a, c, c', b$, with
$a \prec c \prec b$, with $a \prec c' \prec b$, with $a$ having no upper cover other than $c$ and $c'$, and with $b$ having no lower cover other than $c$ and $c'$. We adjoin a relative complement $u$ of $c$ (and so also of $c'$) in the interval $[a,b]$ to get the extension $L = K^+$ of $K$. In this section, $L$ will always refer to $K^+$.

\begin{lemma}\label{L:inad}
If the congruence $\bga$ of $K$ is not admissible, then either
\[
\conK{a,c} \leq \bga \text{ and } \conL{\bga}\restr K  = \bga \jj \conK{b,c}
\] 
or
\[
\conK{b,c} \leq \bga \text{ and } \conL{\bga}\restr K  = \bga \jj \conK{a,c}.
\] 
\end{lemma}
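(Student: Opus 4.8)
The plan is to analyze why admissibility fails for $\bga$ and read off the claimed conclusion from Corollary~\ref{C:ext}. In the present setup, $a$ is meet-reducible (it has the two distinct upper covers $c, c'$) and $b$ is join-reducible (it has the two distinct lower covers $c, c'$), so conditions \ref{D:admissible}\eqref{I:ns} and \eqref{I:nsd} are genuine constraints, not vacuous ones. First I would observe that \eqref{I:con} holds for $\bga$: the only upper cover $x$ of $a$ with $\ncng x = a$ forced to trigger the hypothesis would have $\cng x = a(\bga)$, and since the only upper covers of $a$ are $c$ and $c'$, if $\cng c = a(\bga)$ then the conclusion $\cng c = a(\bga)$ is immediate, while if $\cng{c'} = a(\bga)$ then taking joins with $b$ (using $b = c \jj c'$) and meeting back with $c$ gives $\cng c = a(\bga)$ as well. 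So \eqref{I:con}, and dually \eqref{I:cond}, always hold here, and the only way $\bga$ can fail to be admissible is that \eqref{I:ns} or \eqref{I:nsd} fails.

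Next I would treat the two failure cases. Suppose \eqref{I:ns} fails: then $\cng c = b(\bga)$ but $\ncng a = b(\bga)$. Since $\cng c = b(\bga)$ means $\conK{b,c} \leq \bga$, and since meeting $\cng c = b(\bga)$ down against $c'$ — using $c \jj c' = b$ and... more carefully, joining $\cng c = b$ with itself is trivial, but note $\cng c = b(\bga)$ together with the covering square forces $\cng a = c' (\bga)$? I need to check: from $\cng c = b(\bga)$ and $c' \leq b$, $c' \mm c = a$, we get $\cng{c'} = c' \mm b = c'$ and $\cng{a} = c' \mm c = a$, so no collapse there. Rather, the point is simply that \eqref{I:ns} failing is, by Corollary~\ref{C:ext}(iv)'s analysis, exactly the situation in which $\conL{\bga}\restr K = \bga \jj \conK{a,b}$; but I must refine this to the sharper statement $\bga \jj \conK{b,c}$ claimed in the lemma. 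Here is where I would look carefully at the extension: when \eqref{I:ns} fails we have $\cng c = b(\bga)$, so in the construction of $\bgb$ in Theorem~\ref{T:admis} the element $u$ is put in the class of $a$; the extra collapse on $K$ induced by $\bgb$ then comes from $u \jj c' = b$ forcing $\cng a = \cng{u \mm c'}$... and chasing this through, the induced collapse beyond $\bga$ should be exactly $\conK{a, c'}$, which equals $\conK{a,c}$ as congruences since $c$ and $c'$ are perspective in $[a,b]$. So $\conL{\bga}\restr K = \bga \jj \conK{a,c}$, with the hypothesis recorded as $\conK{b,c} \leq \bga$. Dually, if \eqref{I:nsd} fails, then $\conK{a,c} \leq \bga$ and $\conL{\bga}\restr K = \bga \jj \conK{b,c}$.

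Finally I would note that exactly one of the two cases occurs, since \eqref{I:con} and \eqref{I:cond} always hold and at least one of \eqref{I:ns}, \eqref{I:nsd} fails by hypothesis (they cannot both fail in a way that gives different answers, but even if both fail we land in the first case, so there is no conflict). The main obstacle I anticipate is the second paragraph: correctly identifying that the \emph{extra} collapse on $K$ is precisely $\conK{b,c}$ (resp.\ $\conK{a,c}$) rather than the coarser $\conK{a,b}$. This requires using that $a$ is meet-irreducible \emph{relative to having only the covers $c, c'$} and $b$ is join-irreducible relative to the same — i.e., the hypothesis of the paragraph preceding the lemma that $a$ and $b$ have no other covers — so that collapsing $[b,c]$ does not cascade into collapsing $[a,c]$; one must verify via Lemma~\ref{L:technical} that $\bga \jj \conK{b,c}$ already contains the full restriction $\conL{\bga}\restr K$, which is the one genuine computation in the argument and where I would invoke \eqref{I:con}/\eqref{I:cond} holding to prevent further propagation.
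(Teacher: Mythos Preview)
Your first paragraph contains a genuine error: the claim that condition~\ref{D:admissible}\eqref{I:con} always holds in this setup is false. From $\cng{c'}=a(\bga)$, joining with $c$ gives only $\cng b=c(\bga)$ (since $c\jj c'=b$ and $a\jj c=c$); ``meeting back with $c$'' is vacuous, and no further manipulation forces $\cng c=a(\bga)$ in general. Concretely, take $K=\{0,a,c,c',b,1\}$ with $0\prec a\prec c,c'\prec b\prec 1$ and let $\bga=\conK{a,c'}$: then $\cng{c'}=a(\bga)$ but $\ncng c=a(\bga)$, so \eqref{I:con} fails. Your case split ``only \eqref{I:ns} or \eqref{I:nsd} can fail'' therefore does not stand.

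The paper's route is both correct and much shorter. It proves the contrapositive: if $\ncng a=c(\bga)$ and $\ncng b=c(\bga)$, then all four admissibility conditions hold (the perspectivity $\cng a=c'(\bga)\iff\cng b=c(\bga)$ makes \eqref{I:con} and \eqref{I:cond} vacuous under these hypotheses, and \eqref{I:ns}, \eqref{I:nsd} are vacuous by assumption). Hence non-admissibility forces $\conK{a,c}\leq\bga$ or $\conK{b,c}\leq\bga$. Since $a$ is meet-reducible and $b$ is join-reducible in $K$, cases (b) and (c) of Corollary~\ref{C:ext} never apply, so any non-admissible $\bga$ lands in case~(d):
\[
\conL{\bga}\restr K=\bga\jj\conK{a,b}=\bga\jj\conK{a,c}\jj\conK{b,c}.
\]
The claimed conclusion now follows by a one-line absorption: whichever of $\conK{a,c}$, $\conK{b,c}$ already lies below $\bga$ disappears from the join. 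There is no need to re-open the construction of $\bgb$ from Theorem~\ref{T:admis} or to invoke Lemma~\ref{L:technical} to bound the extra collapse, as you propose in your second paragraph; the ``refinement'' you worry about is not a refinement at all but an immediate algebraic consequence of $\conK{a,b}=\conK{a,c}\jj\conK{b,c}$.
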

\begin{proof}
If $\ncng a = c (\bga)$ and $\ncng b = c (\bga)$, then \ref{D:admissible}\eqref{I:ns} and \eqref{I:nsd} hold.

Now $\cng a = c' (\bga)$ if{}f $\cng b = c (\bga)$ and $\cng b = c' (\bga)$ if{}f $\cng a = c (\bga)$. Since $c'$ and $c$ are the only upper covers in $K$ of $a$ and the only lower covers in $K$ of $b$, \ref{D:admissible}\eqref{I:con} and \eqref{I:cond} also hold. Thus if $\ncng a = c (\bga)$ and $\ncng b = c (\bga)$, then $\bga$ is admissible.

So if $\bga$ is not admissible, then either $\cng a = c (\bga)$ or $\cng b = c (\bga)$. Furthermore, by Corollary~\ref{C:ext},
\[
\conL{\bga}\restr K  = \bga \jj \conK{a,b} = \bga \jj \conK{a,c} \jj \conK{b,c}.
\]
So if $\cng a = c (\bga)$, that is, if $\conK{a,c} \leq \bga$,  then \[\conL{\bga}\restr K  = \bga \jj \conK{b,c},\] and if $\cng b = c (\bga)$, that is, if $\conK{b,c} \leq \bga$, then
\[\conL{\bga}\restr K  = \bga \jj \conK{a,c},\] concluding the proof.
\end{proof}

We note the following triviality that will be useful in our calculations.

\begin{lemma}\label{L:trivial}
Let the lattice $K_0$ be a sublattice of the lattice $K_1$, and let $x, y, z, w \in K_0$.
If $\consub{K_0}{x,y} \leq \consub{K_0}{z,w}$, then $\consub{K_1}{x,y} \leq \consub{K_1}{z,w}$.
\end{lemma}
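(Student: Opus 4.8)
The plan is to pass through the restriction of congruences of $K_1$ to the sublattice $K_0$. Recall the one elementary fact in play: if $K_0$ is a sublattice of $K_1$ and $\bgg$ is a congruence of $K_1$, then $\bgg\restr{K_0}$ is a congruence of $K_0$ (substitution for $\jj$ and $\mm$ is inherited because $K_0$ is closed under these operations). Everything else is minimality of principal congruences.

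First I would restate the hypothesis $\consub{K_0}{x,y} \leq \consub{K_0}{z,w}$ as the single collapse $x \equiv y \pmod{\consub{K_0}{z,w}}$, using that $\consub{K_0}{z,w}$ is by definition the least congruence of $K_0$ identifying $z$ with $w$. Next, consider the congruence $\consub{K_1}{z,w}$ of $K_1$ and restrict it to $K_0$: this yields a congruence $\consub{K_1}{z,w}\restr{K_0}$ of $K_0$ that identifies $z$ with $w$ (since $z,w \in K_0$). By minimality of $\consub{K_0}{z,w}$ among congruences of $K_0$ collapsing $z$ and $w$, we get $\consub{K_0}{z,w} \leq \consub{K_1}{z,w}\restr{K_0}$.

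Combining the two previous steps, $x \equiv y \pmod{\consub{K_1}{z,w}\restr{K_0}}$, and since $x,y \in K_0$ this is the same as $x \equiv y \pmod{\consub{K_1}{z,w}}$ in $K_1$. Finally, because $\consub{K_1}{x,y}$ is the least congruence of $K_1$ identifying $x$ with $y$, we conclude $\consub{K_1}{x,y} \leq \consub{K_1}{z,w}$, as desired.

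There is essentially no obstacle here; the statement is a triviality, as the paper says. The only point that deserves an explicit word is that the restriction of a congruence of $K_1$ to the sublattice $K_0$ is again a congruence (of $K_0$), which is immediate from $K_0$ being closed under $\jj$ and $\mm$. I would keep the written-up proof to three or four sentences mirroring the steps above.
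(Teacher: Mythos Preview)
Your argument is correct. You go through the restriction $\consub{K_1}{z,w}\restr{K_0}$ and use minimality of $\consub{K_0}{z,w}$ among congruences of $K_0$ collapsing $z$ and $w$; the paper instead goes through the extension side, using that the operator $\bga \mapsto \consub{K_1}{\bga}$ (congruence of $K_1$ generated by a relation) is monotone, together with the identity $\consub{K_1}{s,t} = \consub{K_1}{\consub{K_0}{s,t}}$ for $s,t \in K_0$. These are the two halves of the same Galois connection between generation and restriction, so the difference is one of presentation rather than substance; either version can be written in two or three lines. Your explicit remark that restriction of a congruence to a sublattice is again a congruence is the one nontrivial ingredient in both proofs, and it is good that you flag it.
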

\begin{proof}
Let $\bga$ and $\bgb$ be congruence relations on $K_0$ with $\bga \leq \bgb$.
Then $\consub{K_1}{\bga} \leq \consub{K_1}{\bgb}$.
Furthermore, $\consub{K_1}{s,t} = \consub{K_1}{\consub{K_0}{s,t}}$
for $s, t \in K_0$. So for the elements $x, y, z, w \in K_0$, if
$\consub{K_0}{x,y} \leq \consub{K_0}{z,w}$, then $\consub{K_1}{x,y} \leq \consub{K_1}{z,w}$.
\end{proof}

With $K, L$, $a, b, c, c', u$ as above, we have:

\begin{lemma}\label{L:bridge}
Let $x_0, y_0, x_1, y_1 \in K$ with $x_0 \prec y_0$ in $K$  and $x_1 <  y_1$. Then
$\conL{x_0,y_0} \leq \conL{x_1,y_1}$ if{}f at least one of the following three conditions holds:
\begin{enumeratea}
\item\label{I:br1} $\conK{x_0,y_0} \leq \conK{x_1,y_1}$.
\item\label{I:br2} $\conK{x_0,y_0} \leq \conK{a,c}$ and $\conK{b,c} \leq \conK{x_1,y_1}$.
\item\label{I:br3} $\conK{x_0,y_0} \leq \conK{b,c}$ and $\conK{a,c} \leq \conK{x_1,y_1}$.
\end{enumeratea}
\end{lemma}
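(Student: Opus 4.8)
The statement is an "if and only if" about when the prime-interval congruence $\conL{x_0,y_0}$ lies below $\conL{x_1,y_1}$ in the extended lattice $L = K^+$. The natural strategy is to reduce everything back to $K$, where we can use the explicit description of $\conL{\bga}\restr K$ from Corollary~\ref{C:ext} and Lemma~\ref{L:inad}. The key observation is that $K$ is a congruence-determining sublattice of $L$ (Lemma~\ref{L:det}), so $\conL{x_0,y_0} \leq \conL{x_1,y_1}$ holds in $L$ if and only if $\conL{x_0,y_0}\restr K \leq \conL{x_1,y_1}\restr K$ holds in $K$. Thus the whole lemma becomes a computation inside $\Con K$ once we know what those two restrictions are.

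The first main step is to compute $\conL{x_0,y_0}\restr K$. Since $x_0 \prec y_0$ in $K$, the congruence $\bga = \conK{x_0,y_0}$ is join-irreducible in $\Con K$, so it is admissible unless it sits above one of $\conK{a,c}$, $\conK{b,c}$ in a way that breaks one of the four admissibility conditions. Here is where the hypotheses on $K$ matter: $a$ is meet-irreducible except possibly through $c, c'$ and $b$ is join-irreducible except through $c, c'$, so conditions \ref{D:admissible}\eqref{I:ns} and \eqref{I:nsd} are governed entirely by $c'$. By Lemma~\ref{L:inad}, if $\bga$ is not admissible then either $\conK{a,c} \leq \bga$, in which case $\conL{\bga}\restr K = \bga \jj \conK{b,c}$, or $\conK{b,c}\leq \bga$, in which case $\conL{\bga}\restr K = \bga \jj \conK{a,c}$; and since $\bga$ is join-irreducible, $\conK{a,c}\leq\bga$ forces $\bga = \conK{x_0,y_0} = \conK{a,c}$ (as $\conK{a,c}$ is an atom-like join-irreducible and $x_0\prec y_0$), similarly for $b,c$. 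So there are exactly three cases: $\bga$ admissible (restriction is $\bga$); $\bga = \conK{a,c}$ (restriction is $\conK{a,c}\jj\conK{b,c} = \conK{a,b}$); $\bga = \conK{b,c}$ (restriction is $\conK{a,b}$). The second main step is to compute $\conL{x_1,y_1}\restr K$; here $x_1 < y_1$ need not be a prime interval, so $\beta = \conK{x_1,y_1}$ is a general congruence, and Corollary~\ref{C:ext} tells us $\conL{\beta}\restr K$ is $\beta$ if $\beta$ is admissible, and otherwise $\beta \jj \conK{a,c}$, $\beta\jj\conK{b,c}$, or $\beta\jj\conK{a,b}$ according to which conditions fail — again $\conK{a,b} = \conK{a,c}\jj\conK{b,c}$.

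The third step is to assemble the three-way case analysis for $\conL{x_0,y_0}\restr K \leq \conL{x_1,y_1}\restr K$ and match it to conditions \eqref{I:br1}--\eqref{I:br3}, using Lemma~\ref{L:trivial} to move inequalities between $K$ and $L$ when convenient. For the direction "(a), (b), or (c) implies $\conL{x_0,y_0}\leq\conL{x_1,y_1}$": (a) is immediate since $\conL{-,-}$ is monotone in passing through $\conK{-,-}$; (b) says $\conK{x_0,y_0}\leq\conK{a,c}$, so in $L$ we get $\conL{x_0,y_0}\leq\conL{a,c} = \conL{a,u}$, and since $u$ is a relative complement of $c'$ in $[a,b]$, in $L$ we have $\conL{a,c} = \conL{b,u}$... more carefully, collapsing $a$ and $c$ in $L$ collapses $a$ and $u$ too (take join with $c'$: $a\jj c' = b \equiv c\jj c'$... actually $a\equiv c$ gives $b = c\jj c' \equiv a\jj c' $, but $a \jj c' = c'$... let me not grind this) — the point is $\conL{a,c} = \conL{b,c} = \conL{a,b}$ in $L$ because $u$ together with $c,c'$ makes $[a,b]$ collapse as a block, so (b) gives $\conL{x_0,y_0}\leq\conL{a,b} = \conL{b,c}\leq\conL{x_1,y_1}$; (c) is symmetric. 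For the converse, assume $\conL{x_0,y_0}\leq\conL{x_1,y_1}$, restrict to $K$, and split on the three cases for the left side: if $\conK{x_0,y_0}$ is admissible the restriction is $\conK{x_0,y_0}$ itself and one chases that it lands under $\conK{x_1,y_1}\jj(\text{something among }\conK{a,c},\conK{b,c})$, yielding (a), (b), or (c); if $\conK{x_0,y_0} = \conK{a,c}$ or $\conK{b,c}$, the restriction is $\conK{a,b}$, which forces the corresponding strong conclusion.

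The step I expect to be the real obstacle is the converse direction in the admissible case: there $\conL{x_0,y_0}\restr K = \conK{x_0,y_0}$ is "small", and I must show that $\conK{x_0,y_0}\leq \conL{x_1,y_1}\restr K$ — which by Corollary~\ref{C:ext} is one of $\conK{x_1,y_1}$, $\conK{x_1,y_1}\jj\conK{a,c}$, $\conK{x_1,y_1}\jj\conK{b,c}$, $\conK{x_1,y_1}\jj\conK{a,b}$ — actually decomposes into one of the three listed alternatives rather than, say, genuinely needing the join $\conK{a,b}$ without either half alone sufficing. The resolution is that $\conK{x_0,y_0}$ is join-irreducible (since $x_0\prec y_0$), and a join-irreducible congruence lying below a join $\bgg_1\jj\bgg_2$ of congruences lies below one of them — but $\conK{a,b} = \conK{a,c}\jj\conK{b,c}$ is a join of the two relevant atoms, so $\conK{x_0,y_0}\leq\conK{x_1,y_1}\jj\conK{a,b}$ splits as $\conK{x_0,y_0}\leq\conK{x_1,y_1}\jj\conK{a,c}$ or $\leq\conK{x_1,y_1}\jj\conK{b,c}$, and then a further application of join-irreducibility against these two-term joins (using that $\conK{x_0,y_0}\nleq\conK{a,c}$ and $\nleq\conK{b,c}$ in the admissible subcase, else we would be in a different case) pins it to $\conK{x_0,y_0}\leq\conK{x_1,y_1}$, i.e.\ (a). This join-irreducibility bookkeeping, together with keeping straight which admissibility condition fails for $\conK{x_1,y_1}$, is the delicate part; everything else is monotonicity and the identity $\conK{a,b}=\conK{a,c}\jj\conK{b,c}$.
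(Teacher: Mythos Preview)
Your overall strategy---reduce to $K$ via restriction, invoke the explicit description of $\conL{\bgb}\restr K$, and exploit that $\conK{x_0,y_0}$ is join-irreducible---is the paper's. The forward direction is fine (the paper phrases it as: $\set{a,c,u,c',b}$ is an $\SM 3$, so $\conL{a,c}=\conL{b,c}$). The converse, however, has a genuine gap.

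Your trichotomy for $\conL{x_0,y_0}\restr K$ is wrong: when $\conK{x_0,y_0}$ is not admissible, Lemma~\ref{L:inad} gives $\conK{a,c}\leq\conK{x_0,y_0}$ (or the other), which does \emph{not} force equality; join-irreducibility is useless here because the inequality points the wrong way. More seriously, in your final paragraph you assert that admissibility of $\conK{x_0,y_0}$ means $\conK{x_0,y_0}\nleq\conK{a,c}$ and $\conK{x_0,y_0}\nleq\conK{b,c}$, and hence that the admissible subcase always yields \eqref{I:br1}. The direction is reversed: admissibility (see the proof of Lemma~\ref{L:inad}) means $\conK{a,c}\nleq\conK{x_0,y_0}$ and $\conK{b,c}\nleq\conK{x_0,y_0}$. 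A $\conK{x_0,y_0}$ strictly below $\conK{a,c}$ and incomparable with $\conK{b,c}$ is admissible, yet can land in case \eqref{I:br2}, not \eqref{I:br1}. So your argument that the admissible subcase forces \eqref{I:br1} collapses.

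The fix---and this is exactly what the paper does---is to forget about $\conL{x_0,y_0}\restr K$ entirely. Just use $\conK{x_0,y_0}\leq\conL{x_1,y_1}\restr K$ and apply Lemma~\ref{L:inad} (not merely Corollary~\ref{C:ext}) to $\conK{x_1,y_1}$. In the non-admissible case, Lemma~\ref{L:inad} tells you \emph{which} of $\conK{a,c},\conK{b,c}$ already lies below $\conK{x_1,y_1}$, and that the restriction is $\conK{x_1,y_1}$ joined with the \emph{other} one. Join-irreducibility of $\conK{x_0,y_0}$ against that two-term join, together with the standing assumption that \eqref{I:br1} fails, then gives the first half of \eqref{I:br2} or \eqref{I:br3}; the second half is already supplied by Lemma~\ref{L:inad}. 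Your use of Corollary~\ref{C:ext} alone is too weak: in the present setting $a$ is meet-reducible and $b$ is join-reducible, so Corollary~\ref{C:ext} only yields the coarse $\conK{x_1,y_1}\jj\conK{a,b}$, from which you cannot recover which of $\conK{a,c},\conK{b,c}$ sits under $\conK{x_1,y_1}$.
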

\begin{proof}
We first show that each of \eqref{I:br1}, \eqref{I:br2}, \eqref{I:br3} implies
\begin{equation}\label{E:Lleq}
\conL{x_0,y_0} \leq \conL{x_1,y_1}.
\end{equation}
It is immediate from Lemma~\ref{L:trivial}
that \eqref{I:br1} implies \eqref{E:Lleq}.
Since $\set{a,c,u,c',b}$ is an $\SM 3$, it follows that $\conL{a,c} = \conL{b,c}$. Then it is also immediate that each of \eqref{I:br2}, \eqref{I:br3} implies \eqref{E:Lleq}.

We now assume \eqref{E:Lleq} and show that at least one of \eqref{I:br1}, \eqref{I:br2}, \eqref{I:br3} holds. To accomplish this, we assume that \eqref{I:br1} fails:
\begin{equation}\label{E:nbr1}
\conK{x_0,y_0} \nleq \conK{x_1,y_1},
\end{equation}
and show that at least one of \eqref{I:br2}, \eqref{I:br3} holds.

By \eqref{E:Lleq}, we have 
\[\conL{x_0,y_0}\restr K  \leq \conL{x_1,y_1}\restr K,\] and so
\begin{equation}\label{E:Kleq}
\conK{x_0,y_0} \leq \conL{x_1,y_1}\restr K .
\end{equation}

Now there are three possibilities for $\conL{x_1,y_1}\restr K $. Either $\conK{x_1,y_1}$ is admissible, and so we have
\begin{equation}\label{E:1}
\conL{x_1,y_1}\restr K  = \conK{x_1,y_1},
\end{equation}
or either
\[
\conK{a,c} \leq \conK{x_1,y_1} \text{ and } \conL{x_1,y_1}\restr K  = \conK{x_1,y_1} \jj \conK{b,c}
\] 
or
\[
\conK{b,c} \leq \conK{x_1,y_1} \text{ and } \conL{x_1,y_1}\restr K  = \conK{x_1,y_1} \jj \conK{a,c}
\] 
holds by Lemma~\ref{L:inad}.
Thus by \eqref{E:Kleq}, either
\begin{equation}\label{E:2}
\conK{a,c} \leq \conK{x_1,y_1} \text{ and } \conK{x_0,y_0}\leq  \conK{x_1,y_1} \jj \conK{b,c}
\end{equation} 
or
\begin{equation}\label{E:3}
\conK{b,c} \leq \conK{x_1,y_1} \text{ and } \conK{x_0,y_0} \leq \conK{x_1,y_1} \jj \conK{a,c}.
\end{equation}
Since $x_0 \prec y_0$, $\conK{x_0,y_0}$ is a join-irreducible congruence relation. By \eqref{E:nbr1},
\eqref{E:2} implies
\[
\conK{b,c} \leq \conK{x_1,y_1} \text{ and } \conK{x_0,y_0} \leq \conK{a,c},
\]
that is, implies \eqref{I:br2}.

Similarly, \eqref{E:3} implies \eqref{I:br3}.

Thus if \eqref{I:br1} fails, then \eqref{E:Lleq} implies that either \eqref{I:br2} or \eqref{I:br3} holds. Consequently, \eqref{E:Lleq} implies that at least one of \eqref{I:br1}, \eqref{I:br2}, \eqref{I:br3} holds, concluding the proof. 
\end{proof}

\section{The Bridge Theorem}

We start with the Bridge Theorem, which examines how 
the congruences of a bridge extension behave.

\begin{theorem}\label{T:BT}
Let $K$ be a finite lattice and let $a \prec b \prec i \prec b' \prec a'$ in~$K$. We~assume that $a$ and $b$ are meet-irreducible, and $a'$ and $b'$ are join-irreducible. Let the lattice $L$ result from attaching to $K$,  between $[a,b]$ and $[b',a']$, the bridge 
\[\Bridge =\set{a,b,i,b',a',t,u,u',s,m}\] 
depicted in Figure~\ref{F:bridge}.

Then the following hold:
\begin{enumeratei}
\item\label{I:BT1} If $x \prec y$ in $K$, then $x \prec y$ in $L$.

\item\label{I:BT2}  If $x \in K$ is meet-irreducible in $K$ and it differs from $a$ and $b$, then $x$ is meet-irreducible in $L$.

\item\label{I:BT3}  If $x \in K$ is join-irreducible in $K$ and it differs from $a'$ and $b'$, then $x$ is join-irreducible in $L$.

\item\label{I:BT4}  The sublattice $L - \set{m}$ of $L$ is a congruence preserving extension of $K$.

\item\label{I:BT5}  Each join-irreducible congruence of $L$ is  of the form $\conL{x,y}$ for some $x, y \in K$ with $x \prec y$ in $K$. 

\item\label{I:BT6}  For any $x_0, y_0, x_1, y_1 \in K$ 
with $x_0 \prec y_0$ in $K$  and $x_1 <  y_1$, 
the congruence inequality \[\conL{x_0,y_0} \leq \conL{x_1,y_1}\] holds if{}f at least one of the following three conditions holds:
\end{enumeratei}

\begin{equation}\label{I:brt1} 
\conK{x_0,y_0} \leq \conK{x_1,y_1},
\end{equation}
\begin{equation}\label{I:brt2} 
\conK{x_0,y_0} \leq \conK{a,b} \text{ and }
\conK{a',b'} \leq \conK{x_1,y_1},
\end{equation}
\begin{equation}\label{I:brt3} 
\conK{x_0,y_0} \leq \conK{a',b'} \text{ and } 
\conK{a,b} \leq \conK{x_1,y_1}.
\end{equation}
\end{theorem}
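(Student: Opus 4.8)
The plan is to use the fact, recorded in Section~\ref{S:lattice}, that $L$ is built from $K$ by five successive one-element extensions of the type of Lemma~\ref{L:threecover}: four of them adjoin $t,u,u',s$ and produce the sublattice $M:=L-\set{m}$, and the fifth adjoins $m$. Each adjoined element is a relative complement of the middle element of a three-element interval of the current lattice. Reading Figure~\ref{F:bridge}, these intervals are, in order: $[b,b']=\set{b,i,b'}$, whose endpoints $b$ and $b'$ are respectively meet-irreducible and join-irreducible in $K$; the three-element chains $[a,t]$ and $[t,a']$ created by the first step; the three-element chain between the elements adjoined in the second and third steps; and finally the covering square thereby created, into which $m$ is inserted so as to produce the $\SM 3$ of Figure~\ref{F:bridge}. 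I would record these five descriptions explicitly at the outset; the rest is bookkeeping on top of Sections~\ref{S:Admissible} and~\ref{S:prelim}.

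Parts (i)--(iii) I would prove by induction along the five steps, using the join/meet formulas of Lemma~\ref{L:threecover}. When a relative complement $w$ of $c$ in $[p,q]$ is adjoined, $p$ becomes the unique lower cover and $q$ the unique upper cover of $w$, and $w$ can lie strictly between two old elements $x<y$ only if $x\le p\prec c\prec q\le y$, which puts $c$ strictly between $x$ and $y$. Since the covers of $a,b$ (meet-irreducible) and of $a',b'$ (join-irreducible) in $K$ are thereby pinned down, a short case check shows that no step inserts an element strictly inside an old covering pair, giving (i), nor endows an old meet-irreducible element other than $a,b$ with a second upper cover, giving (ii), and dually (iii). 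Carried out step by step this also supplies the covering and irreducibility data in each intermediate lattice $K\subseteq K_1\subseteq K_2\subseteq K_3\subseteq M\subseteq L$ that the next two paragraphs need.

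For (iv): at each of the four steps producing $M$ the bottom of the relevant three-element interval is meet-irreducible and its top join-irreducible in the current lattice---immediate from the hypotheses on $b,b'$ for the first step, and a consequence of the irreducibility data just mentioned for the remaining three. Hence Lemma~\ref{L:admis} applies four times, and since congruence-preserving extensions compose, $M$ is a congruence-preserving extension of $K$. Part (v) follows: a join-irreducible congruence of $L$ is generated by a prime interval of $L$, and by (i) together with the square/$\SM 3$ structure of the five adjoined elements, every prime interval of $L$ is projective, within $L$, to a prime interval of the chain $a\prec b\prec i\prec b'\prec a'$, hence equals $\conL{x,y}$ for a prime interval $[x,y]$ of that chain; and such an $[x,y]$ lies in $K$.

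For (vi) I would apply Lemma~\ref{L:bridge} with its ``$K$'' taken to be $M$, its ``$L$'' taken to be $L$, and its covering square $\set{a,c,c',b}$ taken to be the one that adjoining $m$ turns into the $\SM 3$; its hypotheses in $M$---that this square's bottom has no further upper cover and its top no further lower cover in $M$---are exactly the data of (ii)/(iii) read inside $M$, and, $M$ being a sublattice of $L$, part (i) yields $x_0\prec y_0$ in $M$ whenever $x_0\prec y_0$ in $K$. Since $M$ is a congruence-preserving extension of $K$, the first alternative of Lemma~\ref{L:bridge} is equivalent to \eqref{I:brt1}. For the other two, a perspectivity chase shows that the two join-irreducible congruences generated by prime intervals of that square, computed in $M$, equal the images of $\conK{a,b}$ and of $\conK{a',b'}$ (the perspectivities already hold in $K_2$ and in $K_3$ and lift by Lemma~\ref{L:trivial}); substituting, the remaining alternatives of Lemma~\ref{L:bridge} become exactly \eqref{I:brt2} and \eqref{I:brt3}. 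I expect the main obstacle to be precisely this combination of (iv) and (vi): fixing the exact five-step relative-complement anatomy of the bridge so that Lemma~\ref{L:admis} is genuinely available at each preliminary step, and then running the perspectivity computation carefully enough to identify the ``$\conK{a,c}$'' and ``$\conK{b,c}$'' of Lemma~\ref{L:bridge} with the images of $\conK{a,b}$ and $\conK{a',b'}$. Everything else is routine once the combinatorial skeleton is in place.
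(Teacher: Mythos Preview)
Your overall strategy matches the paper's: build $L$ from $K$ in five one-point steps, verify (i)--(iii) stepwise, obtain (iv) from four applications of Lemma~\ref{L:admis}, and derive (vi) from Lemma~\ref{L:bridge} applied to the last step $M\subset L$, translating back to $K$ via (iv). That is exactly how the paper proceeds (with $K_4$ in place of your $M$), and your handling of (i)--(iv) and (vi) is correct; in particular, the identification $\consub{M}{u,t}=\consub{M}{a,b}$ and $\consub{M}{u',t}=\consub{M}{a',b'}$ that you anticipate is precisely what the paper uses.

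Your argument for (v), however, contains a real error. You assert that ``every prime interval of $L$ is projective, within $L$, to a prime interval of the chain $a\prec b\prec i\prec b'\prec a'$''. This is false in general: $K$ may contain prime intervals entirely unrelated to that chain (indeed, $\Ji(\Con K)$ can be arbitrarily large), and by (i) these remain prime intervals of $L$. There is no mechanism forcing them to be projective to the bridge chain, and your conclusion that every join-irreducible congruence of $L$ is generated by a prime interval of that five-element chain would make $\Ji(\Con L)$ have at most four elements.

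The paper's fix is to use (iv) rather than a global projectivity claim. A join-irreducible congruence of $L$ is $\conL{x,y}$ for some $x\prec y$ in $L$. If $[x,y]$ lies in $M=L-\set{m}$, then since $M$ is a congruence-preserving extension of $K$ there exist $x',y'\in K$ with $x'\prec y'$ in $K$ and $\consub{M}{x,y}=\consub{M}{x',y'}$, whence $\conL{x,y}=\conL{x',y'}$. If $[x,y]$ is not in $M$, it is one of $[u,m]$, $[m,u']$, and \emph{then} your projectivity argument applies to give $\conL{x,y}=\conL{a,b}$. So the repair is small, but as written your (v) does not go through.
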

\begin{proof}
We get to $L$ from $K$ by successively adjoining relative complements and observing that \eqref{I:BT1}, \eqref{I:BT2}, and \eqref{I:BT3} hold at each stage (for each new lattice, rather than just $L$). Thus \eqref{I:BT1}, \eqref{I:BT2}, and \eqref{I:BT3} are verified.

Now we first adjoin the relative complement $t$ of $i$ in $[b,b']$ to get the lattice $K_1$.  By Lemma~\ref{L:admis}, $K_1$ is a congruence preserving extension of $K$. Now $a$ is still meet-irreducible in $K_1$ and $t$ is doubly-irreducible in $K_1$. So $K_2 = K_1 \uu \set{u}$ is a congruence preserving extension of $K_1$, where $u$ is doubly-irreducible, $t$~is now meet-irreducible and $a'$ is still join-irreducible. Then $K_3 = K_2 \uu \set{u'}$ is a congruence preserving extension of $K_2$, where $u$ and $u'$ are both doubly-irreducible. Then \[K_4 = K_3 \uu \set{s} = L - \set{m}\] is a congruence preserving extension of $K_3$. Thus statement~\eqref{I:BT4} is verified.

Now the only prime intervals in $L$ that are not in $K_4$ are $[u,m]$ and $[m,u']$, and 
\[\conL{u.m} = \conL{u',m} = \conL{a,b}.\] Thus each join-irreducible congruence of $L$ is $\conL{x', y'}$ for some $x' \prec y'$ in $K_4$. But $K_4$ is a congruence preserving extension of $K$; thus
$\consub{K_4}{x',y'} = \consub{K_4}{x,y}$ for some $x \prec y$ in~$K$. Then
\[
\conL{x',y'} =\conL{\consub{K_4}{x',y'}} = \conL{\consub{K_4}{x,y}}=\conL{x,y},
\]
verifying \eqref{I:BT5}. 

We now verify statement~\eqref{I:BT6}. The lattice $L$ is obtained by adding a relative complement $m$ of $s$ in the interval $[u,u']$ of $K_4$.
In $K_4$, $t$ and $s$ are the only upper covers of $u$ and the only lower covers of $u'$. The hypotheses of Lemma~\ref{L:bridge} thus apply to the lattice $K_4$ and its extension $L$.
So \[\conL{x_0,y_0} \leq \conL{x_1,y_1}\] if{}f either
\[
\consub{K_4}{x_0,y_0} \leq \consub{K_4}{x_1,y_1}
\]
or
\begin{align*}
\consub{K_4}{x_0,y_0} \leq &\consub{K_4}{u,t} = \consub{K_4}{a,b} \\
&\text{ and } \consub{K_4}{a',b'} =
\consub{K_4}{u', t} \leq \consub{K_4}{x_1,y_1}
\end{align*}
or
\begin{align*}
\consub{K_4}{x_0,y_0} \leq &\consub{K_4}{u',t} = \consub{K_4}{a',b'} \\
&\text{ and } \consub{K_4}{a,b} =
\consub{K_4}{u, t} \leq \consub{K_4}{x_1,y_1}.
\end{align*}
Since $K_4$ is a congruence preserving extension of $K$, we conclude that
\[\conL{x_0,y_0} \leq \conL{x_1,y_1}\] if{}f either
\[
\conK{x_0,y_0} \leq \conK{x_1, y_1}
\]
or
\[
\conK{x_0,y_0} \leq \conK{a,b} \text{ and } \conK{a',b'} \leq \conK {x_1,y_1}
\]
or
\[
\conK{x_0,y_0} \leq \conK{a',b'} \text{ and } \conK{a,b} \leq \conK {x_1,y_1},
\]
thereby verifying \eqref{I:BT6} and concluding the proof
of the theorem.
\end{proof}

Let $K$ and $L$ be as in Theorem~\ref{T:BT}, and let $\conK{a,b} \incomp \conK{a', b'}$. Then we can fuse the set $A = \set{\conK{a, b}, \conK{a' b'}}$ and get the ordered set $\Fuse(\Ji(\Con{K}), A)$. Applying Lemma~\ref{L:isom}, we get an immediate corollary:

\begin{corollary}\label{C:collapse}
Assume that $\conK{a,b} \incomp \conK{a', b'}$ and set
\[
A = \set{\conK{a, b}, \conK{a' b'}}.
\]
Then the mapping $\gf' \colon \Fuse(\Ji(\Con{K}), A) \to \Ji(\Con{L})$, with
\[
\gf' \colon \Fused \mapsto \conL{a, b} = \conL{a', b'}
\]
and \[\gf' \colon \conK{x, y} \mapsto \conL{x, y},\] otherwise, is an order isomorphism.
\end{corollary}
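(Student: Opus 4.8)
The plan is to deduce Corollary~\ref{C:collapse} directly from Lemma~\ref{L:isom}, applied with $P = \Ji(\Con{K})$, $Q = \Ji(\Con{L})$, with $A = \set{\conK{a,b}, \conK{a',b'}}$ as the convex subset, and with $\gf \colon \Ji(\Con{K}) \to \Ji(\Con{L})$ the map sending a join-irreducible congruence $\bga$ of $K$ to the congruence $\conL{\bga}$ of $L$ it generates; writing $\bga = \conK{x,y}$ for a prime interval $[x,y]$ of $K$, this reads $\gf(\conK{x,y}) = \conL{x,y}$ (well defined, since $\conL{x,y}$ depends only on the congruence $\conK{x,y}$, because $\conL{x,y} = \conL{\conK{x,y}}$, as in the verification of Theorem~\ref{T:BT}\eqref{I:BT5}). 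Once the hypotheses of Lemma~\ref{L:isom} are checked, the isomorphism $\gf'$ it produces is, by the underlying universal property (Lemma~\ref{L:ump}), exactly the map in the statement: $\gf'(\Fused) = \gf(\conK{a,b}) = \conL{a,b} = \conL{a',b'}$ and $\gf'(\conK{x,y}) = \conL{x,y}$ for $\conK{x,y} \notin A$.

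First I would dispose of the routine hypotheses. Since $a \prec b$ and $b' \prec a'$ in $K$, both $\conK{a,b}$ and $\conK{a',b'}$ are join-irreducible, and, being incomparable by hypothesis, they form a two-element antichain, hence a nonempty convex subset $A$ of $\Ji(\Con{K})$. The map $\gf$ is isotone and, since a prime interval of $K$ is still prime in $L$ by Theorem~\ref{T:BT}\eqref{I:BT1}, $\gf$ takes values in $\Ji(\Con{L})$; it is surjective by Theorem~\ref{T:BT}\eqref{I:BT5}. Finally $\gf(\conK{a,b}) = \gf(\conK{a',b'})$, that is, $\conL{a,b} = \conL{a',b'}$: applying Theorem~\ref{T:BT}\eqref{I:BT6} with $(x_0,y_0,x_1,y_1) = (a,b,a',b')$, condition \eqref{I:brt2} holds trivially, and with $(a',b',a,b)$, condition \eqref{I:brt3} holds trivially (this is also visible from the diamond in the bridge of Figure~\ref{F:bridge}).

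The one step carrying content is the ``zig-zag'' hypothesis of Lemma~\ref{L:isom}: whenever $\bga_0, \bga_1 \in \Ji(\Con{K})$ with $\bga_0 \nleq \bga_1$ and $\gf(\bga_0) \leq \gf(\bga_1)$, one must exhibit $a_1, a_2 \in A$ with $\bga_0 \leq a_1$ and $a_2 \leq \bga_1$. Write $\bga_0 = \conK{x_0,y_0}$ and $\bga_1 = \conK{x_1,y_1}$ with $x_0 \prec y_0$ and $x_1 \prec y_1$ in $K$; the hypotheses say $\conL{x_0,y_0} \leq \conL{x_1,y_1}$ while \eqref{I:brt1} fails, so Theorem~\ref{T:BT}\eqref{I:BT6} forces \eqref{I:brt2} or \eqref{I:brt3}. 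In the first case $\conK{x_0,y_0} \leq \conK{a,b} \in A$, and $\conK{a',b'} \in A$ satisfies $\conK{a',b'} \leq \conK{x_1,y_1}$, which is what is required; the second case is the same with the two members of $A$ interchanged. Lemma~\ref{L:isom} then yields that $\gf'$ is an order isomorphism, and the first paragraph identifies $\gf'$ with the stated map.

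I do not expect a genuine obstacle: part~\eqref{I:BT6} of the Bridge Theorem is tailored to be exactly the zig-zag condition that recognizes $\Ji(\Con{L})$ as a fusion of $\Ji(\Con{K})$ at $A$, and the corollary merely packages this. The only points needing a little care are the identification $\conL{x,y} = \conL{\conK{x,y}}$ (so that $\gf$ may be described interchangeably on generating prime intervals or on join-irreducible congruences), the remark that every join-irreducible congruence is generated by a prime interval (so that \eqref{I:BT6} covers every comparison that can arise in the zig-zag condition), and the bookkeeping that the map produced by the universal property of $\Fuse$ agrees with the explicit formula in the statement.
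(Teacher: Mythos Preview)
Your proposal is correct and follows exactly the approach the paper intends: the corollary is stated as an immediate consequence of Lemma~\ref{L:isom}, and you have supplied the verification of its hypotheses using Theorem~\ref{T:BT}\eqref{I:BT1}, \eqref{I:BT5}, and \eqref{I:BT6}. The only cosmetic slip is in the tuple $(x_0,y_0,x_1,y_1)=(a,b,a',b')$, where the hypothesis $x_1<y_1$ of Theorem~\ref{T:BT}\eqref{I:BT6} requires $(b',a')$ rather than $(a',b')$; since $\conK{a',b'}=\conK{b',a'}$ this does not affect the argument.
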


We now state sufficient conditions on $K$ to guarantee that attaching a bridge preserves having a minimal set of principal congruences.

\begin{theorem}\label{T:min}
Let the finite lattice $K$ be as in Theorem~\ref{T:BT} and let $K$ furthermore satisfy the following five conditions:
\begin{enumeratei}
\item \label{I:under1}If $x < i$, then $\con{x,i} = \con{b,i}$.
\item \label{I:over1} If $x > i$, then $\con{x,i} = \con{b',i}$.
\item \label{I:min} If $x < y$, then either $\con{x,y} = \one$ or $\con{x,y}$ is a join-irreducible congruence. 
\item \label{I:under2} If $x < b$ and $x \neq a$, then $\con{x,b} = \con{b,i}$.
\item \label{I:over2} If $x > b'$ and $x \neq a'$, then $\con{x,b'} = \con{b',i}$.
\end{enumeratei}
Let $L$ be obtained by attaching the bridge $\Bridge$ depicted in Figure~\ref{F:bridge} to $K$ as in Theorem~\ref{T:BT}. Then $L$ satisfies conditions~\eqref{I:under1}, \eqref{I:over1}, and \eqref{I:min}.
\end{theorem}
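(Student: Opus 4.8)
The plan is to use the structural results already established about the bridge extension---chiefly Theorem~\ref{T:BT}\eqref{I:BT5} and \eqref{I:BT6}, together with Corollary~\ref{C:ext}/Lemma~\ref{L:inad} applied stage by stage in the tower $K \subseteq K_1 \subseteq K_2 \subseteq K_3 \subseteq K_4 \subseteq L$---and to track how the five conditions transfer. First I would dispose of \eqref{I:min}: by Theorem~\ref{T:BT}\eqref{I:BT5}, every join-irreducible congruence of $L$ has the form $\conL{x,y}$ with $x \prec y$ in $K$, and more generally for any $x < y$ in $L$ one reduces $\conL{x,y}$ to a join of such $\conL{x_i,y_i}$ coming from prime intervals of $L$; by \eqref{I:BT1} these prime intervals, except $[u,m]$ and $[m,u']$ (whose congruences equal $\conL{a,b}$, hence equal $\conL{x,y}$ for the prime interval $[a,b]$ of $K$), are prime in $K_4$, and $K_4$ is a congruence-preserving extension of $K$. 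So $\conL{x,y}$ is a join of congruences of the form $\conL{p,q}$ with $p \prec q$ in $K$. If $x,y$ both lie in $K$, then $\conL{x,y} = \conL{\conK{x,y}}$ and, since $K$ satisfies \eqref{I:min}, $\conK{x,y}$ is either $\one_K$ or join-irreducible; in the first case one checks $\conL{x,y} = \one_L$ (the bridge elements collapse along with everything in $K$), in the second $\conL{x,y}$ is join-irreducible in $L$ by \eqref{I:BT6} (any decomposition $\conL{x,y} = \conL{p,q} \jj \conL{r,s}$ pulls back, via the trichotomy of \eqref{I:BT6} and the fact that $\conK{a,b} \jj \conK{a',b'}$ need not be join-irreducible, but under the hypotheses one shows it cannot properly decompose). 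The remaining case, where $x$ or $y$ is a new bridge element, is handled by writing $\conL{x,y}$ in terms of the $K$-intervals squeezed between consecutive covers, using \eqref{I:BT1}.

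Next, conditions \eqref{I:under1} and \eqref{I:over1}. Here $i$ is a fixed element of $K$ (the ``middle'' of the bridge), and I must show that for every $x <_L i$, $\conL{x,i} = \conL{b,i}$, and dually for $x >_L i$. The new elements below $i$ among the bridge are $t$, $u$, $u'$, $s$, $m$ --- actually from Figure~\ref{F:bridge} the bridge sits on the ``side'', so $i$ has the same lower covers in $L$ as in $K$ except possibly for none of the new elements (one must read off from the figure which new elements are $\le i$; $b$ is the lower cover of $i$ in $K$, and $t \prec i$ as well since $t$ is a relative complement of $i$ in $[b,b']$, so $b$ and $t$ are the lower covers of $i$ in $K_1$). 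So for $x <_L i$ I split on whether $x \in K$ or $x$ is new. If $x \in K$, then $\conL{x,i} = \conL{\conK{x,i}} = \conL{\conK{b,i}} = \conL{b,i}$ by \eqref{I:under1} for $K$ and congruence-preservation of $K_4/K$. If $x$ is a new element below $i$ --- the candidates are $t$ (and possibly $u$, depending on the diagram) --- then $\conL{x,i}$ collapses a prime interval $[x,i]$ or a short chain from $x$ up to $i$; since $\conL{t,i}$: note $t \prec i$ and $i$ has $b$ as another lower cover with $b \mm t = \cdots$; the $\SM 3$-type relation $\set{b,t,i}$ forces $\conL{t,i} = \conL{b,i}$, and similarly any new $x \prec i$ gives $\conL{x,i}$ equal to one of these. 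Condition \eqref{I:over1} is dual, using $b' \prec i$ in the dual sense --- wait, $i \prec b'$ in $K$, so ``$x > i$'' uses the upper covers of $i$, which are $b'$ in $K$ and $b', t$ in $K_1$; the dual argument with \eqref{I:over1} for $K$ applies.

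The step I expect to be the main obstacle is verifying \eqref{I:min} in the case where $x$ or $y$ is one of the five new bridge elements, because there one genuinely has to use all of \eqref{I:under1}, \eqref{I:over1}, \eqref{I:under2}, \eqref{I:over2}: the point of \eqref{I:under2} and \eqref{I:over2} is precisely to guarantee that when a prime interval of $K$ near $b$ or $b'$ is ``reached'' through the bridge, its congruence is forced to be $\conK{b,i}$ or $\conK{b',i}$, which is a \emph{single} join-irreducible rather than a proper join; without this, a congruence like $\conL{t, y}$ could split. Concretely, I would compute $\conL{t,y}$, $\conL{u,y}$, etc.\ for $y$ ranging over the covers, express each as $\conL{a,b} \jj (\text{something coming from }K)$ via Corollary~\ref{C:ext} applied at the appropriate stage, and use \eqref{I:under2}/\eqref{I:over2} to collapse the ``something'' into $\conK{b,i}$ (resp.\ $\conK{b',i}$), then observe that $\conL{a,b} \jj \conL{b,i} = \conL{a,i}$ which is join-irreducible because $[a,i]$ --- hmm, $a \prec b \prec i$ so $\conL{a,i} = \conL{a,b} \jj \conL{b,i}$ is a join of two join-irreducibles and one must check these are comparable, i.e.\ that $\conK{a,b} \le \conK{b,i}$ or vice versa, which should follow from the geometry of $\Base P$ together with \eqref{I:under2}. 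This comparability is the crux; everything else is bookkeeping with Corollary~\ref{C:ext} and the congruence-preservation of $K_4$ over $K$.
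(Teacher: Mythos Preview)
Your overall strategy (case split on whether $x,y\in K$; for new elements, reduce $\conL{x,y}$ to congruences of $K$) matches the paper's, but there are two concrete errors that would derail the argument as you have sketched it.

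First, a geometric mistake. You write that $t\prec i$ because $t$ is a relative complement of $i$ in $[b,b']$; in fact this means $t\mm i=b$ and $t\jj i=b'$, so $t\parallel i$. A short check shows that \emph{every} new bridge element $t,u,u',s,m$ is incomparable with $i$. Hence for $x<_L i$ one automatically has $x\in K$, and condition~\eqref{I:under1} for $L$ follows in one line from \eqref{I:under1} for $K$; there is no ``new element below $i$'' case. Your proposed computation of $\conL{t,i}$ via an $\SM 3$ at $i$ is based on a picture that does not exist.

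Second, for the bridge case of \eqref{I:min} you correctly isolate the question of whether $\conL{a,b}\le\conL{b,i}$, but then say it ``should follow from the geometry of $\Base P$ together with \eqref{I:under2}''. This is the wrong hypothesis and the wrong context: Theorem~\ref{T:min} is stated for an abstract $K$, and the needed inequality is an immediate consequence of hypothesis~\eqref{I:under1} applied to $a<i$ (giving $\conK{a,i}=\conK{b,i}$, hence $\conK{a,b}\le\conK{b,i}$). Dually, $\conK{a',b'}\le\conK{b',i}$ comes from~\eqref{I:over1}. These two inequalities are the engine of the bridge case: since $\conL{a,b}=\conL{a',b'}$ and $\conL{a,u}=\conL{b,t}=\conL{i,b'}$, every $\conL{x,y}$ with $y\notin K$ and $x\in\{a,b\}$ collapses to the single join-irreducible $\conL{i,b'}$, not to a nontrivial join as you feared. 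The remaining subcase $x<b$, $x\ne a$ then uses~\eqref{I:under2} to force $\conL{x,y}=\one_L$.

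Finally, for $x,y\in K$ you propose to show join-irreducibility of $\conL{x,y}$ by pulling back a hypothetical decomposition through Theorem~\ref{T:BT}\eqref{I:BT6}. This is unnecessarily delicate. The paper's route is direct: by~\eqref{I:min} for $K$, $\conK{x,y}=\conK{v,w}$ for some $v\prec_K w$; then $\conL{x,y}=\conL{v,w}$, and $v\prec_L w$ by Theorem~\ref{T:BT}\eqref{I:BT1}, so $\conL{x,y}$ is join-irreducible.
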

\begin{proof}
Let $x \in L$ with $x < i$. Then $x \in K$ and so, by \eqref{I:under1} for $K$, $\conK{x,i} = \conK{b,i}$. Then $\conL{x,i} = \conL{b,i}$, verifying \eqref{I:under1} for $L$.

\eqref{I:over1} is just the dual of \eqref{I:under1}.

Now let us denote by $o$ the zero of $K$, and so of $L$, and by $o'$ the unit of $K$, and so of $L$. By \eqref{I:under1} and \eqref{I:over1}, which were assumed for $K$ and verified for $L$, we note that if $x < i < y$, then, whether we are referring to $K$ or $L$, $\con{x,y} = \one$. Indeed,
\begin{align*}
\con{x,y} = \con{x,i} \jj \con{y,i} &= \con{b,i} \jj \con{b',i}\\
&=\con{o,i} \jj \con{o',i} = \con{o,o'} = \one.
\end{align*}

We now verify \eqref{I:min} for $L$.

Assume first that $x, y \in K$.  By \eqref{I:min} for $K$, either \[\conK{x,y} = \one_K = \conK{o,o'}\]
and so \[\conL{x,y} = \conL{o,o'} = \one_L,\] or there are $v, w \in K$ with $v \prec w$ in $K$ and with $\conK{x,y} = \conK{v,w}$.  Then $\conL{x,y} = \conL{v,w}$ and, by Theorem~\ref{T:BT}\eqref{I:BT1}, $v \prec w$ in $L$, that is, $\conL{x,y}$ is join-irreducible.

Otherwise, by duality, we can assume that $y \nin K$. If $x \prec y$ in $L$, we are done.
So we may assume that $y$ does not cover $x$ in $L$
Then $x \in K$, and so $x \leq b$.

Assume, first, that $x=b$. Then $y = u'$. So
\[
\conL{x,y} = \conL{b,t} \jj \conL{t,u'} = \conL{i,b'} \jj \conL{a',b'} =\conL{i,b'},
\]
since \[\conL{a',b'} \leq \conL{a',i} = \conL{i,b'}\] by \eqref{I:over1}. Thus if $x = b$, then $\conL{x,y}$ is join-irreducible.

Thus we are left with the case when $x < b$.

Assume further that $x=a$. Then $y$, not covering $x$, must be one of $t, m, s, u'$. Then
$\conL{u,y} = \conL{a',b'}$, and so
\[
\conL{x,y} = \conL{a,u} \jj \conL{u,y} = \conL{i,b'} \jj \conL{a',b'} = \conL{i,b'}
\]
since $\conL{a',b'} \leq \con{i,b'}$. Thus again, $\conL{x,y}$ is join-irreducible.

We are then left with the case $x < b$ and $x \neq a$.

By \eqref{I:under2} for $K$, $\conK{x,b} = \conK{b,i}$. 
Therefore, $\conL{x,b} = \conL{b,i}$. Since $y \nin K$, $y \geq t$. Then
\[
\one_L = \conL{b,b'} = \conL{b,i} \jj \conL{b',i} = \conL{x,b} \jj \conL{b,t} \leq \conL{x,y},
\]
since $b <i < b'$. So in this final case, we have $\conL{x,y} = \one_L$.

We have thus verified \eqref{I:min} for $L$, concluding the proof. 
\end{proof}

In order to repeatedly attach bridges, we state the following  easy lemma.

\begin{lemma}\label{L:attach}
Let the finite lattice $K$ be as in Theorem~\ref{T:BT}, and let $L$ be obtained by attaching the bridge $\Bridge$ depicted in Figure~\ref{F:bridge}. Assume further that there are elements $a_0, b_0 \in K$, with $a_0 \neq a$, $b_0 \neq b$, with $a_0 \prec b_0 \prec i$ in $K$, such that for any $x \in K$ with $x \neq a_0$ and $x  < b_0$, we have $\conK{x,b_0} = \conK{b_0,i}$. 
Then for any $x \in L$ with $x \neq a_0$ and $x < b_0$, 
we have $\conL{x,b_0} = \conL{b_0,i}$.
\end{lemma}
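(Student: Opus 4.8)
The plan is to reduce the statement for $L$ to the hypothesis on $K$ by showing that attaching the bridge creates no new element below $b_0$; that is, $\setm{x \in L}{x < b_0} = \setm{x \in K}{x < b_0}$. Once this is established, the conclusion is immediate: given $x \in L$ with $x \ne a_0$ and $x < b_0$, we get $x \in K$ (and $x < b_0$ in $K$ as well, since $K$ is a sublattice of $L$), hence $\conK{x,b_0} = \conK{b_0,i}$ by hypothesis, hence $\conK{x,b_0} \le \conK{b_0,i}$ and $\conK{b_0,i} \le \conK{x,b_0}$; applying Lemma~\ref{L:trivial} twice, with $K_0 = K$ and $K_1 = L$, then gives $\conL{x,b_0} \le \conL{b_0,i}$ and $\conL{b_0,i} \le \conL{x,b_0}$, i.e., $\conL{x,b_0} = \conL{b_0,i}$.

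So the real work is the structural claim, which I would argue as follows. The elements of $L - K$ are $t, u, u', s, m$, and each of them lies above $u$ in $L$: the covering relations used in building $\Bridge$ (see the proof of Theorem~\ref{T:BT}, or Figure~\ref{F:bridge}) give $u \prec t$, $u \prec s$, $u \prec m$, and $t \prec u'$, so $u'>t>u$ and $s,m>u$ as well. Next I would check that $u \nleq i$. Since $u \jj b = t$ and $b < i$, the assumption $u \le i$ would force $t = u \jj b \le i$, contradicting $t \jj i = b' > i$. Hence $u \nleq i$, and therefore no element $z \ge u$ satisfies $z \le i$. As $b_0 \prec i$ gives $b_0 < i$, it follows that no element $z \ge u$ satisfies $z \le b_0$; in particular none of $t, u, u', s, m$ is below $b_0$, and so every $x \in L$ with $x < b_0$ already belongs to $K$.

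The only point needing care is the first claim of the previous paragraph — that every element adjoined in passing from $K$ to $L$ sits above $u$, and that $u \nleq i$. This is a direct reading of the bridge construction and of Figure~\ref{F:bridge}, so it should present no genuine obstacle; everything else is the one-line transfer argument via Lemma~\ref{L:trivial}.
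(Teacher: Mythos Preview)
Your proposal is correct and follows essentially the same route as the paper: the paper's proof is the two-line observation that $x < b_0 < i$ forces $x \in K$ (since no bridge element lies below $i$), after which the congruence identity transfers from $K$ to $L$. You simply supply the details the paper omits---explicitly checking that each of $t,u,u',s,m$ lies above $u$ and that $u \nleq i$, and invoking Lemma~\ref{L:trivial} for the transfer---but the underlying argument is the same.
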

\begin{proof}
Since $x < i$, it follows that $x \in K$. 
Thus all elements considered are elements of $K$, and so
$\conL{x,b_0} = \conL{b_0,i}$, since $\conK{x,b_0} = \conK{b_0,i}$.
\end{proof}

\section{Proving Theorem~\ref{T:mainvar}(ii)}\label{S:theorem}

\subsection{Some technical results}\label{S:results}

We first summarize the properties of the lattice $\Framew P$ of G. Gr\"atzer \cite{gGb}.

\begin{lemma}\label{L:frame}
Let $P$ be a finite ordered set with a greatest element $p_0$ and let $L = \Framew P$. There is an order isomorphism $\gz_P \colon P \to \Ji(\Con{L})$ such that $\gz_P(p_0) = \one$ and such that  the following five statements hold.
\begin{enumeratei}
\item\label{I:frame1} $\bga \in \Ji(\Con{L})$ for all $\bga \in \Princ{L}$.
\item\label{I:frame2} $\conL{x,i} = \one = \gz_P(p_0)$ for all $x \in L$ distinct from $i$. 
\item\label{I:frame3} For each $p \in P$ distinct from $p_0$, there are $a_p, b_p \in L$ with $a_p \prec b_p \prec i$ such that $\conL{a_p,  b_p} = \gz_P(p)$.
\item\label{I:frame4} $a_p$ and $b_p$ are meet-irreducible for each $p \in P$ distinct from $p_0$.
\item\label{I:frame5} For each $p \in P$ distinct from $p_0$ and each $x \in P$ distinct from $a_p$, if $x < b_p$, then $\conL{x, b_p} = \one = \gz_P(p_0)$.
\end{enumeratei}
\end{lemma}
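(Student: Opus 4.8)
The plan is to obtain all five statements by quoting the analysis of $\Framew P$ carried out in G.~Gr\"atzer \cite{gGb} (and, for the underlying frame lattice $\Frame P$, in \cite{gG13}), adding only the bookkeeping needed to phrase everything in the present notation. Concretely, I would first recall that $\Con{\Framew P} \iso D$, where $D$ is the finite distributive lattice with $\Ji(D) \iso P$, and then \emph{define} $\gz_P \colon P \to \Ji(\Con L)$ to be the composite of the canonical isomorphism $P \iso \Ji(D)$ with the restriction to join-irreducibles of a fixed isomorphism $D \iso \Con L$. This $\gz_P$ is an order isomorphism. Since $P$ has a greatest element $p_0$, the lattice $D$ has $\one$ join-irreducible, and $p_0$ is the join-irreducible element of $D$ lying above all the others; hence $\gz_P(p_0) = \one$.

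Statements (ii)--(v) are then read off from the construction. The chains $\SC p = \set{0, a_p, b_p, 1}$ are built into $\Frame P$ (see Figure~\ref{F:newframe}), and \cite{gGb, gG13} identify $\conL{a_p, b_p}$ with $\gz_P(p)$ and show that $a_p$ and $b_p$ are meet-irreducible; this gives (iii) and (iv). For (ii), $i$ is the greatest element of $L = \Framew P$, and it is a design feature of the frame lattice that every prime interval ending at $i$ generates $\one$; since $\conL{x,i} = \JJm{\conL{z,i}}{x \leq z \prec i}$, statement (ii) follows, and I would supply the precise pointer into \cite{gG13}. Statement (v) is the analogous local fact about $b_p$: apart from $a_p$, collapsing any element below $b_p$ with $b_p$ generates $\one$; this too is established in \cite{gGb}, and I would cite it.

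Finally, statement (i) says that $L = \Framew P$ is itself a minimal representation, i.e. $\Princ L = \set{\zero, \one} \uu \Ji(\Con L)$; this is exactly Corollaries~15 and~16 of \cite{gGb} (it is what verifies Theorem~\ref{T:mainvar}(i), as noted in Section~\ref{S:Related}), so that every nontrivial principal congruence of $L$ is join-irreducible, while $\one = \gz_P(p_0)$ is join-irreducible because $p_0$ is the greatest element of $P$.

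There is no genuine mathematical difficulty here: the substantive work --- computing $\Con{\Framew P}$ by repeated application of the Technical Lemma (Lemma~\ref{L:technical}) while tracking the contribution of each chain $\SC p$ and each inserted $W(p,q)$, and deducing the minimal-representation property --- was carried out in \cite{gGb, gG13}. The only point requiring care is consistency of notation: one must pin down $\gz_P$ so that the prime-interval identification in (iii) and the normalization $\gz_P(p_0) = \one$ both hold simultaneously, and match the symbols $i$, $a_p$, $b_p$, $\SC p$ used here to those of the cited papers. I would not reproduce the congruence computation from scratch.
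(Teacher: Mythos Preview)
Your proposal is correct and matches the paper's own treatment: the paper gives no proof of this lemma at all, merely introducing it with ``We first summarize the properties of the lattice $\Framew P$ of G.~Gr\"atzer \cite{gGb}'' and then stating the five properties. Your plan to cite \cite{gGb} and \cite{gG13} for each item, adding only notational bookkeeping, is exactly what is intended here; if anything, you are supplying more detail than the paper does.
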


Let $P$ be a finite ordered set with exactly two maximal elements, $p_0$ and $p_1$. To prove Theorem~\ref{T:mainvar}(ii), 
we proceed by mathematical induction on the size of the 
subset $\Dg{p_0}\,\,\ii \Dg{p_1}$. First, the result that provides the base of the induction.

\begin{lemma}\label{L:ind0}
Let $P$ be a finite ordered set with exactly two maximal elements $p_0$ and $p_1$, and let $\Dg{p_0}\,\ \ii \Dg{p_1} = \es$. Then there is a finite lattice $L$ with zero $o$, unit $o'$, and element $i$ distinct from $o$ and $o'$, and there is an order isomorphism $\gz_P \colon P \to \Ji(\Con{L})$ such that the following nine statements hold.
\begin{enumeratei}
\item\label{I:one0} If $\bga \in \Princ{L}$ and $\bga \neq \one$, then $\bga \in \Ji(\Con{L})$.
\item\label{I:two0} $\conL{x, i} = \gz_P(p_0)$ for all $x \in L$ with $x < i$.
\item\label{I:three0} $\conL{x, i} = \gz_P(p_1)$ for all $x \in L$ with $ x > i$.
\item\label{I:four0} For each $p \in P$ with $p < p_0$, there are $a_p, b_p \in L$ with $a_p \prec b_p \prec i$ such that $\conL{a_p, b_p} = \gz_P(p)$.
\item\label{I:five0} For each $p \in P$ with $p < p_1$, there are $a'_p, b'_p \in L$ with $i \prec b'_p \prec a'_p$ such that $\conL{a'_p, b'_p} = \gz_P(p)$.
\item\label{I:six0} For each $p \in P-\set{p_0, p_1}$, if $p \nleq p_1$, that is, if $p < p_0$, then $a_p$ and $b_p$ are meet-irreducible in $L$.
\item\label{I:seven0} For each $p \in P-\set{p_0, p_1}$, if $p \nleq p_0$, that is, if $p < p_1$, then $a'_p$ and $b'_p$ are join-irreducible in $L$.
\item\label{I:eight0} For each $p \in P-\set{p_0,p_1}$ and each $x \in L-\set{a_p}$, if $x < b_p$, then $\conL{x, b_p} = \gz_P(p_0)$.
\item\label{I:nine0} For each $p \in P-\set{p_0,p_1}$ and each $x \in L-\set{a'_p}$, if $x > b'_p$, then $\conL{x, b'_p} = \gz_P(p_1)$.
\end{enumeratei}
\end{lemma}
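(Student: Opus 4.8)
The plan is to dispose of the base case by taking for $L$ the base lattice $\Base{P}$ itself, since with $\Dg{p_0}\ii\Dg{p_1}=\es$ there are no bridges to attach. Write $P_j=\Dg{p_j}$, so that $P=P_0\dot\cup P_1$ and each $P_j$ has greatest element $p_j$; let $L_0=\Framew{P_0}$ with zero $o$ and unit $i$, let $L_1$ be the dual of $\Framew{P_1}$ with zero $i$ ($=i'$) and unit $o'$, and put $L=\Base{P}=L_0\gsum L_1$. Applying Lemma~\ref{L:frame} to $P_0$ and $L_0$ gives an order isomorphism $\gz_{P_0}\colon P_0\to\Ji(\Con{L_0})$ with $\gz_{P_0}(p_0)=\one_{L_0}$ and properties \eqref{I:frame1}--\eqref{I:frame5}; applying it to $P_1$, $\Framew{P_1}$ and dualizing gives $\gz_{P_1}\colon P_1\to\Ji(\Con{L_1})$ with $\gz_{P_1}(p_1)=\one_{L_1}$ and the dual properties (the $i\prec b'_p\prec a'_p$ of clauses \eqref{I:five0}, \eqref{I:seven0}, \eqref{I:nine0} being the duals of the $a_p\prec b_p\prec i$ of Lemma~\ref{L:frame}).

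The one structural point is that restriction $\bga\mapsto(\bga\restr{L_0},\bga\restr{L_1})$ is an isomorphism $\Con{L}\to\Con{L_0}\times\Con{L_1}$. Here $L_0$ is the ideal $\Dg{i}$ and $L_1$ the filter of elements above $i$; since $x=x\mm i$ for $x\in L_0$ and $y=y\jj i$ for $y\in L_1$, collapsing two elements of $L_0$ never forces a collapse in $L_1$ and conversely, and a congruence of $L$ is both determined by, and freely assembled from, its two restrictions. Under this isomorphism a principal congruence $\conL{x,y}$ with $x,y\in L_0$ is the image of $(\consub{L_0}{x,y},\zero)$, one with $x,y\in L_1$ the image of $(\zero,\consub{L_1}{x,y})$, and $\one_L$ the image of $(\one_{L_0},\one_{L_1})$. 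Hence $\Ji(\Con{L})$ is the free union of $\Ji(\Con{L_0})$ and $\Ji(\Con{L_1})$, and composing with $\gz_{P_0},\gz_{P_1}$ yields an order isomorphism $\gz_P\colon P=P_0\dot\cup P_1\to\Ji(\Con{L})$ with $\gz_P(p)=\conL{\gz_{P_j}(p)}$ for $p\in P_j$; in particular $\gz_P(p_0)$ is the image of $(\one_{L_0},\zero)$ and $\gz_P(p_1)$ of $(\zero,\one_{L_1})$, neither equal to $\one_L$.

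It then remains to read the nine clauses off from Lemma~\ref{L:frame} and its dual through this decomposition. For \eqref{I:one0}: if $x,y$ lie in the same $L_j$ then $\conL{x,y}$ is the image of a join-irreducible congruence of $\Con{L_j}$ by \eqref{I:frame1}, so it is join-irreducible in $\Con{L}$; and if $x<i<y$ then $\conL{x,y}=\conL{x,i}\jj\conL{i,y}=(\one_{L_0},\zero)\jj(\zero,\one_{L_1})=\one_L$ by \eqref{I:frame2} and its dual. Clauses \eqref{I:two0}, \eqref{I:three0} are \eqref{I:frame2} and its dual seen through the decomposition; \eqref{I:four0}, \eqref{I:five0} use the $a_p\prec b_p\prec i$ of \eqref{I:frame3}, which stay covers in $L$ because $L_0$ is an ideal and $L_1$ a filter, together with $\conL{a_p,b_p}=(\gz_{P_0}(p),\zero)=\gz_P(p)$; \eqref{I:six0}, \eqref{I:seven0} hold because an element of $L_0$ strictly below $i$ has the same upper covers in $L$ as in $L_0$, so meet-irreducibility transfers from \eqref{I:frame4} (dually for $L_1$ and \eqref{I:seven0}); and \eqref{I:eight0}, \eqref{I:nine0} follow since $x<b_p<i$ forces $x\in L_0$, whence \eqref{I:frame5} gives $\consub{L_0}{x,b_p}=\one_{L_0}$ and thus $\conL{x,b_p}=(\one_{L_0},\zero)=\gz_P(p_0)$ (dually for \eqref{I:nine0}). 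The only genuine work is the congruence decomposition of the glued sum; once that elementary check---hinging on $i$ separating $L_0$ from $L_1$---is in place, everything else is bookkeeping, which is why this lemma serves as the ``easy'' base of the induction.
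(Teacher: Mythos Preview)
Your proof is correct and follows essentially the same route as the paper: take $L=\Base{P}=L_0\gsum L_1$, apply Lemma~\ref{L:frame} to each half, use that $\Ji(\Con L)$ is the free union of $\Ji(\Con{L_0})$ and $\Ji(\Con{L_1})$ to assemble $\gz_P$, and then read off the nine clauses. You spell out the congruence decomposition $\Con L\cong\Con{L_0}\times\Con{L_1}$ and the verification of each clause in more detail than the paper does, but the argument is the same.
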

\begin{proof}
We let $L$ be the base lattice for $P$, $\Base{P}$, as defined in section~\ref{S:construction}. That is, setting $P_0 = \Dg{p_0}$ and $P_1 = \Dg{p_1}$, our ordered set $P$ is then the free union of $P_0$ and $P_1$. The lattice $L$ is then $L_0 \gsum L_1$, where $L_0$ is the lattice $\Framew P_0$ and $L_1$ is the dual of the lattice $\Framew P_1$---see the details in section~\ref{S:construction}.

By Lemma~\ref{L:frame}, there are order isomorphisms $\gz_0 \colon P_0  \to \Ji(\Con{L_0})$ and $\gz_1 \colon P_1 \to \Ji(\Con{L_1})$ such that statements \eqref{I:frame1}--\eqref{I:frame5} of Lemma~\ref{L:frame} hold for $\gz_0$, $P_0$, $L_0$ and their duals (with $i', a'_p, b'_p$ replacing $i, a_p, b_p$, respectively) hold for $\gz_1$, $P_1$, $L_1$. Now, $\Ji(\Con{L})$ is the free union of the ordered sets $\Ji(\Con{L_0})$ and $\Ji(\Con{L_1})$ and so we have the order isomorphism $\gz_P \colon P \to \Ji(\Con{L})$ which is $\gz_0$ on $P_0$ and $\gz_1$ on $P_1$, and thus satisfies our statements~\eqref{I:two0}--\eqref{I:nine0}.

Only statement~\eqref{I:one0} is left to verify. So let $x, y \in L$ with $x < y$. If $x, y$ are both in $L_0$ or both in $L_1$, then $\conL{x, y} \in \Ji(\Con{L})$ by the self-dual statement Lemma~\ref{L:frame}\eqref{I:frame1}. On the other hand, if $x \in L_0$ and $y \in L_1$, then $x \leq i=i' \leq y$ and $\conL{x, i} = \conL{o,i}$ and $\conL{i', y} = \conL{i', o'}$ by statement~\eqref{I:frame2} and its dual of Lemma~\ref{L:frame}. Then $\conL{x,y} = \conL{o,o'} = \one_L$. Thus statement~\eqref{I:one0} is verified, concluding the proof.
\end{proof}

We then have:

\begin{theorem}\label{T:ind}
Let $P$ be a finite ordered set with exactly two maximal elements $p_0$ and $p_1$. Then there is a finite lattice $L$ with zero $o$, unit $o'$, and element $i$ distinct from $o$ and $o'$, and there is an order isomorphism $\gz_P \colon P \to \Ji(\Con{L})$ such that the following nine statements hold.
\begin{enumeratei}
\item\label{I:one} If $\bga \in \Princ{L}$ and $\bga \neq \one$, then $\bga \in \Ji(\Con{L})$.
\item\label{I:two} $\conL{x, i} = \gz_P(p_0)$ for all $x \in L$ with $x < i$.
\item\label{I:three} $\conL{x, i} = \gz_P(p_1)$ for all $x \in L$ with $ x > i$.
\item\label{I:four} For each $p \in P$ with $p < p_0$, there are $a_p, b_p \in L$ with $a_p \prec b_p \prec i$ such that $\conL{a_p, b_p} = \gz_P(p)$.
\item\label{I:five} For each $p \in P$ with $p < p_1$, there are $a'_p, b'_p \in L$ with $i \prec b'_p \prec a'_p$ such that $\conL{a'_p, b'_p} = \gz_P(p)$.
\item\label{I:six} For each $p \in P-(\Dg{ p_1})_P$ distinct from $p_0$, the elements $a_p$ and $b_p$ are meet-irreducible in $L$.
\item\label{I:seven} For each $p \in P-(\Dg{ p_0})_P$ distinct from $p_1$, the elements $a'_p$ and $b'_p$ are join-irreducible in $L$.
\item\label{I:eight} For each $p \in P-\set{p_0,p_1}$ and each $x \in L-\set{a_p}$, if $x < b_p$, then $\conL{x, b_p} = \gz_P(p_0)$.
\item\label{I:nine} For each $p \in P-\set{p_0,p_1}$ and each $x \in L-\set{a'_p}$, if $x > b'_p$, then $\conL{x, b'_p} = \gz_P(p_1)$.
\end{enumeratei}
\end{theorem}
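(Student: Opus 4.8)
The plan is to argue by induction on $n = |\Dg{p_0} \ii \Dg{p_1}|$. The base case $n = 0$ is precisely Lemma~\ref{L:ind0}: when $\Dg{p_0} \ii \Dg{p_1} = \es$, each $p \in P - \set{p_0, p_1}$ lies below exactly one of $p_0, p_1$, so ``$p \nin \Dg{p_1}$'' and ``$p < p_0$'' agree and \eqref{I:six}--\eqref{I:seven} of the theorem are \eqref{I:six0}--\eqref{I:seven0} of that lemma, the other seven statements being verbatim the same.

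For $n \geq 1$, I would choose a maximal element $a$ of $\Dg{p_0} \ii \Dg{p_1}$. Since $p_0 \neq p_1$ are both maximal, $a < p_0$ and $a < p_1$, hence $a \nin \set{p_0, p_1}$, and $P = \Dg{p_0} \uu \Dg{p_1}$ with $\Dg{p_0}, \Dg{p_1}$ downsets, neither contained in the other; so $S = \Split(P, a)$ is defined, splitting $a$ into an incomparable pair $a_0 \in (\Dg{p_0})_S$, $a_1 \in (\Dg{p_1})_S$. A direct check from \eqref{E:Sa}--\eqref{E:S} shows that $p_0, p_1$ remain the only maximal elements of $S$, that $a_0 < p_0$ and $a_1 < p_1$ in $S$, and that $(\Dg{p_0})_S \ii (\Dg{p_1})_S = (\Dg{p_0} \ii \Dg{p_1}) - \set{a}$ has $n - 1$ elements. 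By the inductive hypothesis there is a finite lattice $K$ with zero, unit, element $i$, and an order isomorphism $\gz_S \colon S \to \Ji(\Con{K})$ satisfying \eqref{I:one}--\eqref{I:nine} with $S$ in place of $P$.

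Next I would set up the bridge. Put $(a, b) \ce (a_{a_0}, b_{a_0})$, the rung from \eqref{I:four} for $a_0 < p_0$, and $(b', a') \ce (b'_{a_1}, a'_{a_1})$, from \eqref{I:five} for $a_1 < p_1$; then $a \prec b \prec i \prec b' \prec a'$ in $K$, with $a, b$ meet-irreducible and $a', b'$ join-irreducible by \eqref{I:six}--\eqref{I:seven} (applicable since $a_0 \nin (\Dg{p_1})_S$ and $a_1 \nin (\Dg{p_0})_S$). Thus $K$ has the form required in Theorem~\ref{T:BT}, and I let $L$ be obtained by attaching the bridge $\Bridge$ to $K$. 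From \eqref{I:one}, \eqref{I:two}, \eqref{I:three}, \eqref{I:eight}, \eqref{I:nine} for $K$ one reads off the five hypotheses of Theorem~\ref{T:min} for $K$, so by that theorem $L$ satisfies \eqref{I:under1}, \eqref{I:over1}, \eqref{I:min} of Theorem~\ref{T:min}; these yield \eqref{I:one}--\eqref{I:three} for $L$ once $\gz_P$ is available, using $\conL{b, i} = \gz_P(p_0)$ and $\conL{b', i} = \gz_P(p_1)$. To build $\gz_P$: from $a_0 \incomp a_1$ in $S$ we get $\conK{a, b} \incomp \conK{a', b'}$, so Corollary~\ref{C:collapse} gives an order isomorphism $\gf' \colon \Fuse(\Ji(\Con{K}), A) \to \Ji(\Con{L})$ with $A = \set{\conK{a, b}, \conK{a', b'}}$, carrying $\Fused$ to $\conL{a, b} = \conL{a', b'}$ and $\conK{x, y}$ to $\conL{x, y}$ otherwise; composing $\gf'$ with the isomorphism $\Fuse(S, \set{a_0, a_1}) \to \Fuse(\Ji(\Con{K}), A)$ induced by $\gz_S$ and with the inverse of the isomorphism of Lemma~\ref{L:isoFS} yields $\gz_P \colon P \to \Ji(\Con{L})$, with $\gz_P(a) = \conL{a, b}$ and $\gz_P(p) = \conL{x, y}$ whenever $p \neq a$ and $\gz_S(p) = \conK{x, y}$ with $x \prec y$ in $K$.

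It remains to verify \eqref{I:four}--\eqref{I:nine} for $L$, by the dichotomy $p \neq a$ versus $p = a$. For $p \neq a$ the inductive witnesses $a_p, b_p$ (resp.\ $a'_p, b'_p$) lie in $K$ and, using that $b$ is meet-irreducible in $K$ (unique upper cover $i$) together with \eqref{I:eight} at index $a_0$, differ from $a, b$ (resp.\ $a', b'$); Theorem~\ref{T:BT}\eqref{I:BT1}--\eqref{I:BT3} then carries their coverings and their meet-/join-irreducibility into $L$, while $\gf'$ gives $\conL{a_p, b_p} = \gz_P(p)$. For $p = a$, set $a_a = a$, $b_a = b$, $a'_a = a'$, $b'_a = b'$; then $\conL{a, b} = \gz_P(a)$ by construction, and since $a$ lies in $\Dg{p_1}$, conditions \eqref{I:six}--\eqref{I:seven} do not demand irreducibility of these elements---which is just as well, since $b$ acquires the new upper cover $t$ in $L$. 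Finally, \eqref{I:eight}--\eqref{I:nine} hold because every $x \in L$ strictly below $b_p$ already lies in $K$ ($b_p \prec i$), so they reduce to \eqref{I:eight}--\eqref{I:nine} for $K$ at the appropriate index (Lemma~\ref{L:attach} covers the case $p \neq a$). I expect the main obstacle to be organizational rather than conceptual: lining up the split element $a$ with the attachment rung $a_{a_0} \prec b_{a_0} \prec i$ of $K$, checking that $\gz_P$ is well defined and an isomorphism, and keeping the ``$p = a$'' case straight---the real content being already packaged in Theorems~\ref{T:BT} and~\ref{T:min}, Corollary~\ref{C:collapse}, and Lemma~\ref{L:isoFS}.
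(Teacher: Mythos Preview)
Your proposal is correct and follows the same route as the paper: induction on $|\Dg{p_0}\cap\Dg{p_1}|$ with Lemma~\ref{L:ind0} as the base, splitting a maximal element of the intersection, applying the inductive hypothesis to the split poset, attaching a bridge, and assembling $\gz_P$ from Corollary~\ref{C:collapse} (equivalently Theorem~\ref{T:BT}\eqref{I:BT5}--\eqref{I:BT6} plus Lemma~\ref{L:isom}) together with Lemma~\ref{L:isoFS}, then reading off \eqref{I:one}--\eqref{I:three} from Theorem~\ref{T:min}, \eqref{I:six}--\eqref{I:seven} from Theorem~\ref{T:BT}\eqref{I:BT1}--\eqref{I:BT3}, and \eqref{I:eight}--\eqref{I:nine} from Lemma~\ref{L:attach}. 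The only differences are cosmetic: the paper names the split element $q$ rather than $a$ (your choice overloads $a$ as both a poset element and the lattice point $a_{a_0}$, which is harmless but awkward), and it re-derives Corollary~\ref{C:collapse} inline instead of citing it; conversely, you take slightly more care than the paper to justify that the witnesses $a_p,b_p$ for $p\neq a$ avoid the bridge rung, which Theorem~\ref{T:BT}\eqref{I:BT2}--\eqref{I:BT3} and Lemma~\ref{L:attach} require.
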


\begin{proof}
We proceed by mathematical induction on the size of \[P' = (\Dg{p_0})_P \ii (\Dg{p_1})_P.\] If $P' = \es$, then Lemma~\ref{L:ind0} applies.

So let $P'$ contain at least one element. By finiteness, there is a maximal element $q \in P'$. We split $q$ into $q_0$ and $q_1$ and set $Q=\Split(P,q)$ with $q_0 <_Q p_0$ and $q_1 <_Q p_1$. Then \[(\Dg{p_0})_Q \ii (\Dg{p_1})_Q = P' - \set{q}.\] So we may assume that there is a finite lattice $K$ with zero $o$ and unit $o'$ and with an element $i$ with $o < i < o'$,  and that there is an order isomorphism  $\gz_Q \colon Q \to \Ji(\Con{K})$ such that statements \eqref{I:one}--\eqref{I:nine}, with $P$ replaced by $Q$ and $L$ replaced by $K$, hold.

By statements~\eqref{I:four} and \eqref{I:five} for $Q$ and $K$, there are $a_{q_0}, b_{q_0}, a'_{q_1}, b'_{q_1}$ in $K$ with $a_{q_0} \prec b_{q_0} \prec i \prec b'_{q_1} \prec a'_{q_1}$ and with $\conK{a_{q_0}, b_{q_0}}  = \gz_Q(q_0)$ and  $\conK{a'_{q_1}, b'_{q_1}}  = \gz_Q(q_1)$. By statements~\eqref{I:six} and \eqref{I:seven} for $Q$, $K$, $a_{q_0}, b_{q_0}$ are meet irreducible and $a'_{q_1}, b'_{q_1}$ are join-irreducible. We attach to $K$ the bridge
\[
   \Bridge(q_0, q_1) = 
   \set{a_{q_0}, b_{q_o}, i, b'_{q_1}, a'_{q_1}, t_{q_0,q_1}, 	
        u_{q_0,q_1}, u'_{q_0,q_1}, s_{q_0,q_1}, m_{q_0,q_1}}
\]
(see Figure~\ref{F:bridge2}; this construct is a generalization
of the bridge, $\Bridge(p)$, of Section~\ref{S:lattice}---in fact,
$\Bridge(p) = \Bridge(p, p)$) 
between $[a_{q_0}, b_{q_0}]$ and $[b'_{q_1}, a'_{q_1}]$, thereby getting the lattice $L$. By Theorem~\ref{T:BT}\eqref{I:BT1}, we have an isotone mapping \[\gf \colon \Ji(\Con{K}) \to \Ji(\Con{L})\] with
\[\gf \colon \conK{x,y} \mapsto \conL{x,y}\] 
for each pair $x, y \in K$ with $x \prec_K y$.

\begin{figure}
\centerline{\includegraphics[scale=.7]{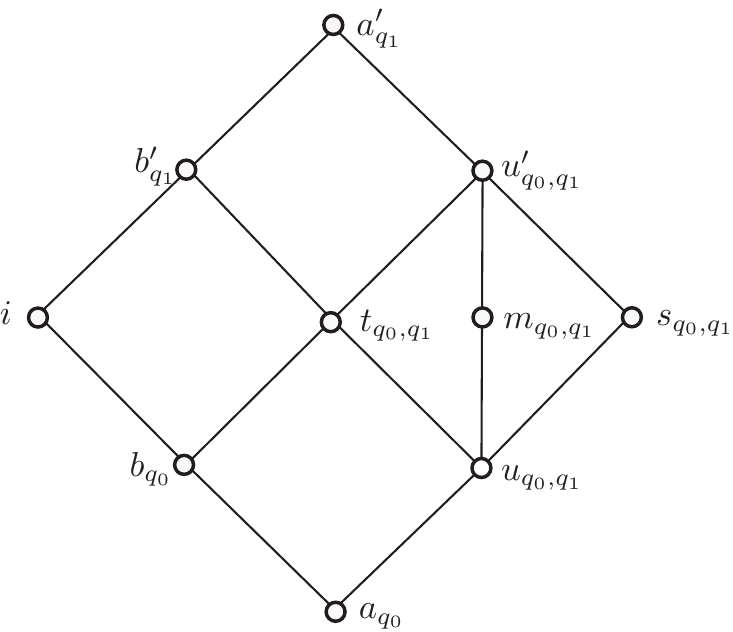}}
\caption{Notation for the bridge, $\Bridge(q_0, q_1)$}
\label{F:bridge2}
\end{figure}

Now $\conK{a_{q_0}, b_{q_0}} \incomp \conK{a'_{q_1}, b'_{q_1}}$ since $q_0 \incomp_Q q_1$ and $\gz_Q$ is an order isomorphism. Thus we fuse the two congruences; setting
\[
A = \set{\conK{a_{q_0}, b_{q_0}}, \conK{a'_{q_1}, b'_{q_1}}},
\]
we get the ordered set $\Fuse(\Ji(\Con{K}), A)$.
Now\[
\gf(\conK{a_{q_0}, b_{q_0}}) = \gf(\conK{a'_{q_1}, b'_{q_1}})
\]
since $\conL{a_{q_0}, b_{q_0}} = \conL{a'_{q_1}, b'_{q_1}}$. By Lemma~\ref{L:ump}, we get
\[
\gf' \colon \Fuse(\Ji(\Con{K}), A) \to \Ji(\Con{L}),
\]
where
\[
\gf' \colon \Fused \mapsto \conL{a_{q_0}, b_{q_0}} = \conL{a'_{q_1}, b'_{q_1}}
\]
and
\[
\gf' \colon \conK{x, y} \mapsto \conL{x, y},
\]
otherwise.

By Theorem~\ref{T:BT}\eqref{I:BT5},  $\gf$ is surjective, and so
\[
\gf' \colon \Fuse(\Ji(\Con{K}), A) \to \Ji(\Con{L})
\]
is an order isomorphism, by Theorem~\ref{T:BT}\eqref{I:BT6} and Lemma~\ref{L:isom}. The order isomorphism \[\gz_Q \colon Q \to \Ji(\Con{K})\] yields an order isomorphism 
\[
   \gz' \colon \Fuse(Q, \set{q_0,q_1}) \to \Fuse(\Ji(\Con{K}), A)
\]
 with $\gz' \colon \Fused_{\set{q_0, q_1}} \mapsto \Fused_A$ and $\gz' \colon p \mapsto \gz_Q(p)$ otherwise. We then have the order isomorphism 
\[
   \gf'\gz' \colon \Fuse(Q, \set{q_0,q_1}) \to \Ji(\Con{L})
\]  
with 
\[
   \iota_{\set{q_0,q_1}} \mapsto \conL{a_{q_0},b_{q_0}} = 
      \conL{a'_{q_1}, b'_{q_1}}
\]
and
$p \mapsto \gf\gz_Q(p)$ otherwise. By Lemma~\ref{L:isoFS}, there is an order isomorphism $P \to \Fuse(Q, \set{q_0, q_1})$ with $q \mapsto \iota_\set{q_0, q_1}$ and $p \mapsto p$ otherwise. We then get the desired order isomorphism
\[
   \gz_P \colon P \to \Ji(\Con{L})
\]
with
\begin{equation}\label{E:zPp}
   \gz_P \colon p \mapsto \gf(\gz_Q(p)) = \conL{\gz_Q(p)}
\end{equation}
for $p \neq q$
and
\[
   \gz_P \colon q \mapsto \conL{a_{q_0},b_{q_0}} = 
      \conL{a'_{q_1}, b'_{q_1}}.
\]
and, by \eqref{E:zPp} and statements~\eqref {I:four} and \eqref{I:five} for $Q$ and $K$, if $p \neq q$,
\[
   \gz_P \colon  p \mapsto
   \begin{cases}
      \gf(\conK{a_p, b_p}) = \conL{a_p, b_p}
      \text{\q\q for } p \leq p_0,\\
       \gf(\conK{a'_p, b'_p}) = 
   \conL{a'_p, b'_p} \text{\q\q for } p \leq p_1.
   \end{cases}
\]

Thus statements~\eqref{I:four} and \eqref{I:five} hold for $Q$ and $L$.

We now verify the other seven statements for $Q$ and $L$.

By statements~\eqref{I:two}, \eqref{I:three}, \eqref{I:one}, \eqref{I:eight}, and \eqref{I:nine} for $Q$,  $K$, and Theorem~\ref{T:min}, it follows that statements~\eqref{I:one}, \eqref{I:two}, and \eqref{I:three} hold for $P$, $L$.

Now let $p \in P-(\Dg{p_1})_P$ be distinct from $p_0$. Then $p \in Q-(\Dg{p_1})_Q$, and is distinct from $q_0$ as well. Then by statement~\eqref{I:six} for $Q$, $K$, the elements $a_p, {b_p}$ are meet-irreducible  in $K$. Then by Theorem~\ref{T:BT}\eqref{I:BT2}, $a_p, b_p$ are meet-irreducible in $L$, establishing statement~\eqref{I:six} for $P$, $L$. Similarly, by Theorem~\ref{T:BT}\eqref{I:BT3}, 
we get statement~\eqref{I:seven} for $P$, $L$.

Finally, statements~\eqref{I:eight} and \eqref{I:nine} for $P$, $L$ follow from the corresponding statements for  $Q$, $K$ and from Lemma~\ref{L:attach} and its dual.

By mathematical induction, the proof of the theorem is thus concluded.
\end{proof}

\subsection{An application}\label{S:application}

As an application of the results in Section~\ref{S:results},
we obtain Theorem~\ref{T:mainvar}(ii):

\begin{theorem}\label{T:mainvar2}
Let $D$ be a finite distributive lattice with exactly two dual atoms. Then $D$ has a minimal representation $L$.
\end{theorem}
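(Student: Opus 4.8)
The plan is to harvest the results of Section~\ref{S:results}; essentially no new work is needed. Put $P = \Ji(D)$. Since $D$ has exactly two dual atoms, Lemma~\ref{L:corr} tells us that $P$ is a finite ordered set with exactly two maximal elements, say $p_0$ and $p_1$. Hence Theorem~\ref{T:ind} applies to $P$ and produces a finite lattice $L$, with bounds $o$ and $o'$ and a distinguished element $i$ satisfying $o < i < o'$, together with an order isomorphism $\gz_P \colon P \to \Ji(\Con{L})$ satisfying the nine listed statements.

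First I would use $\gz_P$ to identify $D$ with $\Con{L}$: since $\Ji(D) = P \iso \Ji(\Con{L})$ as ordered sets, the Birkhoff representation theorem for finite distributive lattices yields $D \iso \Con{L}$. It then remains to check that, relative to this isomorphism, $L$ is a minimal representation, that is, that $\Princ{L} = \Min{L}$. The inclusion $\Min{L} \ci \Princ{L}$ is \eqref{E:comb}, valid for every finite lattice. For the reverse inclusion, let $\bga \in \Princ{L}$. If $\bga \in \set{\zero, \one}$, then $\bga \in \Min{L}$ by the definition of $\Min{L}$; otherwise Theorem~\ref{T:ind}\eqref{I:one} gives $\bga \in \Ji(\Con{L}) \ci \Min{L}$. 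Thus $\Princ{L} \ci \Min{L}$, whence $\Princ{L} = \Min{L}$, and $L$ is a minimal representation of $D$.

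I do not expect a genuine obstacle: all the difficulty has already been absorbed into Theorem~\ref{T:ind} and, beneath it, into the Bridge Theorem (Theorem~\ref{T:BT}), Theorem~\ref{T:min}, and the fusion and splitting lemmas of Sections~\ref{S:fus}--\ref{S:theorem}. The only steps needing care are the bookkeeping of the Birkhoff duality and the recognition that statement~\eqref{I:one} of Theorem~\ref{T:ind} is exactly the property that forces minimality. As a byproduct, since $\Framew{P_0}$, the dual of $\Framew{P_1}$, and each attached bridge have length independent of $P$, the lattice $L$ produced this way has length $10$, which is the remark following Theorem~\ref{T:main}.
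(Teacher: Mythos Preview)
Your proposal is correct and follows essentially the same route as the paper: invoke Lemma~\ref{L:corr} to see that $P=\Ji(D)$ has exactly two maximal elements, apply Theorem~\ref{T:ind} to obtain $L$ and the isomorphism $P\iso\Ji(\Con L)$, and then use statement~\eqref{I:one} of Theorem~\ref{T:ind} to conclude minimality. The extra bookkeeping you spell out (Birkhoff duality, the two inclusions for $\Princ L=\Min L$, the length-$10$ remark) is just making explicit what the paper leaves implicit.
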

\begin{proof}
By Lemma~\ref{L:corr}, $P = \Ji(D)$ has exactly two maximal elements. By Theorem~\ref{T:ind}, there is an a finite lattice $L$ satisfying statement Theorem~\ref{T:ind}\eqref{I:one} and an order isomorphism $P \to \Ji(\Con{L})$. We then have our required representation, and it is minimal by Theorem~\ref{T:ind}\eqref{I:one}. 
\end{proof}

\end{document}